\documentclass[11pt]{amsart}
%
\usepackage{
  amsmath,
  amsfonts,
  amssymb,
  mvsymb,   
  graphicx, 
  times,
  color,
  hyphenat,
  pinlabel}
\input xy
\xyoption{all}
%

\setlength{\textwidth}{5.82in}
\setlength{\oddsidemargin}{0.5in}
\setlength{\evensidemargin}{0.4in}


\newcommand{\cH}{\mathcal{H}}

\newcommand{\brak}[1]{\langle #1\rangle}
\newcommand{\n}{\noindent}
\newcommand{\foam}{\mathbf{Foam}_{/\ell}}
\newcommand{\wfoam}{\widehat{\mathbf{Foam}_{/\ell}}}
\newcommand{\kom}{\mathbf{Kom}}
\newcommand{\mf}[1]{\mathbf{MF}_{#1}}
\newcommand{\hmf}[1]{\mathbf{HMF}_{#1}}

\newcommand{\Link}{\mathbf{Link}}
\newcommand{\Modbg}{\mathbf{Mod_{bg}}}
\newcommand{\Modgr}{\mathbf{Mod_{gr}}}

\DeclareMathOperator{\End}{End}
\DeclareMathOperator{\Ext}{Ext}
\DeclareMathOperator{\HKR}{HKR}
\DeclareMathOperator{\KR}{KR}
\DeclareMathOperator{\hy}{H}

\newcommand{\figins}[3] 
{\raisebox{#1pt}{\includegraphics[height=#2 in]{figs/#3}}}
\newcommand{\figwins}[3] 
{\raisebox{#1pt}{\includegraphics[width=#2 in]{figs/#3}}}
\newcommand{\figwhins}[4] 
{\raisebox{#1pt}{\includegraphics[height=#2 in, width=#3 in]{figs/#4}}}
\newcommand{\figs}[2] 
{{\includegraphics[scale =#1]{figs/#2}}}

\newtheorem{thm}{Theorem}[section]
\newtheorem{lem}[thm]{Lemma}
\newtheorem{cor}[thm]{Corollary}
\newtheorem{prop}[thm]{Proposition}

\theoremstyle{definition}
\newtheorem{defn}[thm]{Definition}


\newcommand{\bZ}{\mathbb{Z}}
\newcommand{\bQ}{\mathbb{Q}}

\newcommand{\bC}{\mathbb{C}}

\DeclareMathOperator{\Hom}{Hom}
\DeclareMathOperator{\Ker}{Ker}

\DeclareMathOperator{\Image}{Im}
\DeclareMathOperator{\id}{Id}

\newcommand{\ra}{\rightarrow}
\newcommand{\xra}[1]{\xrightarrow{#1}}
\newcommand{\lra}{\longrightarrow}

\newcommand{\ket}[1]{|\,#1\,\rangle}

%
%
%
\title{The foam and the matrix factorization $sl_3$ link homologies are equivalent}
\author{Marco Mackaay}
\address{Departamento de Matem\'{a}tica\\ Universidade do Algarve\\ 
Campus de Gambelas\\ 8005-139 Faro\\ Portugal and CAMGSD\\Instituto Superior T\'{e}cnico\\ Avenida Rovisco Pais\\ 
1049-001 Lisboa\\ Portugal}
\email{mmackaay@ualg.pt}
\author{Pedro Vaz}
\address{Departamento de Matem\'{a}tica\\ Universidade do Algarve\\ 
Campus de Gambelas\\ 8005-139 Faro\\ Portugal and  
CAMGSD\\Instituto Superior T\'{e}cnico\\ Avenida Rovisco Pais\\ 
1049-001 Lisboa\\ Portugal}
\email{pfortevaz@ualg.pt}
\begin{document}
%
\begin{abstract}
We prove that the universal rational $sl_3$ link homologies which were constructed 
in \cite{khovanovsl3, mackaay-vaz}, using foams, and in \cite{KR}, using 
matrix factorizations, are naturally isomorphic as projective functors from the 
category of links and link cobordisms to the category of bigraded vector spaces. 
\end{abstract}
\maketitle
%
%
%
%
\section{Introduction}\label{sec:intro}

In~\cite{khovanovsl3} Khovanov constructed a bigraded integer link homology 
categorifying the $sl_3$ link polynomial. His construction used singular cobordisms 
called foams. Working in a category of foams modulo certain relations, 
the authors in \cite{mackaay-vaz} generalized Khovanov's theory and constructed 
the universal integer $sl_3$-link homology (see also  
\cite{morrison-nieh} for a slightly different approach). 
In~\cite{KR} Khovanov and Rozansky (KR) 
constructed a rational bigraded theory that categorified the $sl_n$ link polynomial 
for all $n>0$. They conjectured that their theory is isomorphic to the one 
in~\cite{khovanovsl3} for $n=3$, after tensoring the latter with $\bQ$. 
Their construction uses matrix factorizations and can be generalized to give 
the universal rational link homology for all $n>0$ 
(see \cite{gornik, rasmussen-diff, Wu}). In this paper we prove that the 
universal rational KR link homology for $n=3$ is equivalent to the foam link homology 
in~\cite{mackaay-vaz} tensored with $\bQ$.

One of the main difficulties one encounters when trying to relate both theories 
mentioned above is that the foam approach uses ordinary {\em webs}, which are ordinary 
oriented trivalent graphs, whereas the KR theory uses {\em KR-webs}, which are 
trivalent graphs containing two types of edges: oriented edges and unoriented 
thick edges. In general there are 
several KR-webs that one can associate to an ordinary web, so there is no 
obvious choice of a KR matrix factorization to associate to a web. However, we show 
that the KR-matrix factorizations for all these KR-webs are homotopy equivalent and 
that between two of them there is a canonical choice of homotopy equivalence 
in a certain sense. This allows us to associate an equivalence class of KR-matrix 
factorizations to each ordinary web. After that it is relatively straightforward to 
show the equivalence between the foam and the KR $sl_3$ link homologies.

In Section~\ref{sec:foam} we review the category $\foam$ and the main results 
of \cite{mackaay-vaz}. In Section~\ref{sec:KR-3} we recall some basic facts about 
matrix factorizations and define the universal KR homology for $n=3$. 
Section~\ref{sec:sl3-mf} is the core of the paper. 
In this section we show how to associate equivalence classes of 
matrix factorizations to ordinary webs and use them to construct a 
link homology that is equivalent to Khovanov and Rozansky's. 
In Section~\ref{sec:iso} we establish the equivalence between the foam 
$sl_3$ link homology and the one presented in Section~\ref{sec:sl3-mf}.

We assume familiarity with the papers~\cite{KR, mackaay-vaz}.

\section{The category $\foam$ revisited}\label{sec:foam}

This section contains a brief review of the universal rational $sl_3$ link homology 
using foams as constructed by the authors~\cite{mackaay-vaz} 
following Khovanov's ideas in \cite{khovanovsl3}. 
Here we simply state the basics and the modifications that are necessary 
to relate it to Khovanov and Rozansky's universal $sl_3$ link homology using matrix 
factorizations. We refer to~\cite{mackaay-vaz} for details. 

The category $\foam$ has webs as objects and $\bQ[a,b,c]$-linear combinations of 
foams as morphisms divided by the set of relations $\ell=(3D,CN,S,\Theta)$  
and the \emph{closure relation}, which we all explain below. Note that 
we are using a different normalization of the coefficients\footnote{We thank Scott Morrison 
for spotting a mistake in the coefficients in a previous version of this paper.} in our relations compared to 
\cite{mackaay-vaz}. These are necessary to 
establish the connection with the KR link homology later on. 

$$
\figins{-4.5}{0.2}{plan3dot}=
a
\figins{-4.5}{0.2}{plan2dot}+
b
\figins{-4.5}{0.2}{plan1dot}+
c
\figins{-4.5}{0.2}{plan0dot}
\rlap{\hspace{18.5ex}\text{(3D)}}
$$
$$
\figwhins{-17}{0.5}{0.23}{cylinder}=
4\left(
-\figwhins{-17}{0.5}{0.23}{cneck01}
-\figwhins{-17}{0.5}{0.23}{cneck02}
-\figwhins{-17}{0.5}{0.23}{cneck03}
+a
\left( 
\figwhins{-17}{0.5}{0.23}{cnecka1}+
\figwhins{-17}{0.5}{0.23}{cnecka2}
\right)
+b
\figwhins{-17}{0.5}{0.23}{cneckb}
\right)
\rlap{\hspace{8ex}\text{(CN)}}
$$
$$
\figwins{-8}{0.3}{sundot}=
\figwins{-8}{0.3}{sdot}=0,\quad
\figwins{-8}{0.3}{sddot}=-\frac{1}{4}
\rlap{\hspace{20.5ex} \text{(S)}}
$$

Let $\theta(\alpha,\beta,\delta)$ denote the theta foam in figure~\ref{fig:theta},
\begin{figure}[ht!]
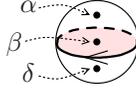

\labellist
\small\hair 2pt
\pinlabel $\alpha$ at 3 33
\pinlabel $\beta$ at -3 17
\pinlabel $\delta$ at 3 5
\endlabellist
\centering
\figs{0.9}{thetafoam}
\caption{A theta-foam}
\label{fig:theta}
\end{figure}
where $\alpha$, $\beta$ and $\delta$ are the number of dots on each facet.
For $\alpha, \beta, \delta\leq 2$ we have
\begin{equation*}
\theta(\alpha,\beta,\delta)=\begin{cases}
\ \ \frac{1}{8} & (\alpha,\beta,\delta)=(1,2,0)\mbox{ or a cyclic permutation} \\ 
- \frac{1}{8} & (\alpha,\beta,\delta)=(2,1,0)\mbox{ or a cyclic permutation} \\ 
\ \ 0 & \mbox{else}
\end{cases}
\rlap{\hspace{6ex}($\Theta$)}
\end{equation*}

The \emph{closure relation} says that any 
$\bQ[a,b,c]$-linear combination of foams, all of which have the same boundary,  
is equal to zero if and only if any common way of closing these foams yields a 
$\bQ[a,b,c]$-linear combination of closed foams whose evaluation is zero.

The category $\foam$ is additive and graded. The $q$-grading in 
$\bQ[a,b,c]$ is defined as 
$$q(1)=0,\quad q(a)=2,\quad q(b)=4,\quad q(c)=6$$
and the degree of a foam $f$ with $|\bullet|$ dots is given by
$$q(f)=-2\chi(f)+\chi(\partial f)+2|\bullet|,$$
where $\chi$ denotes the Euler characteristic.

Using the relations $\ell$ one can prove the identities (\emph{RD}), (\emph{DR}) and 
(\emph{CN}) and Lemma~\ref{lem:KhK} below (for detailed proofs see \cite{mackaay-vaz}).

$$
\figwhins{-16}{0.5}{0.23}{sdisk}=
2\left(
\figwhins{-16}{0.5}{0.23}{cnecka1}-
\figwhins{-16}{0.5}{0.23}{cnecka2}
\right)
 \rlap{\hspace{22.5ex} \text{(RD)}}
$$
$$
\figins{-20}{0.6}{digonfid-sl3}=
2\left(
\figins{-26}{0.75}{digon_rem1-sl3}-
\figins{-26}{0.75}{digon_rem2-sl3}
\right)
\rlap{\hspace{12.8ex}\text{(DR)}}
$$
$$
\figins{-28}{0.8}{square_id-sl3}=
-\ \figins{-28}{0.8}{square_rem1-sl3}
-\figins{-28}{0.8}{square_rem2-sl3}
\rlap{\hspace{12.4ex}\text{(SqR)}}
$$

\begin{lem}\label{lem:KhK}
{\em (Khovanov-Kuperberg relations~\cite{khovanovsl3, Kup})} We have the 
following decompositions in $\foam$:
$$
\figins{-11.5}{0.4}{digonweb} \cong 
\figins{-11.5}{0.4}{arc-u}\{-1\}\oplus 
\figins{-11.5}{0.4}{arc-u}\{1\}
\rlap{\hspace{19ex}\text{(Digon Removal)}}
$$
$$
\figins{-10}{0.35}{squareweb}\cong 
\figins{-10}{0.35}{vedgesweb}\oplus
\figins{-10}{0.35}{hedgesweb}
\rlap{\hspace{16.2ex}\text{(Square Removal)}}
$$
where $\{j\}$ denotes a positive shift in the $q$-grading by $j$.
\end{lem}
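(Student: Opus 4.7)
The plan is to exhibit explicit pairs of foam cobordisms realizing the claimed direct sum decompositions and to check the matrix-of-compositions identities using the relations already in hand. Since the category $\foam$ is additive, a decomposition $X \cong Y_1 \oplus Y_2$ amounts to producing morphisms $\varphi_i\colon X\to Y_i$ and $\psi_i\colon Y_i\to X$ (of appropriate degree) satisfying $\psi_j\varphi_i=\delta_{ij}\,\mathrm{id}_{Y_i}$ and $\varphi_1\psi_1+\varphi_2\psi_2=\mathrm{id}_X$. The relations (DR) and (SqR) stated just above the lemma are in fact tailor-made to furnish the completeness identity $\sum_i\varphi_i\psi_i=\mathrm{id}_X$; the content of the lemma is that, with the right choice of dots on the constituent foams, the orthogonality relations hold as well.

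For \emph{Digon Removal}, I would take $\psi_1,\psi_2$ to be the ``zip'' cobordism that creates a digon from an arc, decorated respectively with $0$ and $1$ dot on one chosen facet, and dually $\varphi_1,\varphi_2$ the ``unzip'' cobordism with $1$ and $0$ dots. The right-hand side of (DR) then reads exactly $\varphi_1\psi_1-\varphi_2\psi_2$ (up to the factor $2$ and sign, which will be absorbed by rescaling one of the pairs). A short check of the grading using $q(f)=-2\chi(f)+\chi(\partial f)+2|\bullet|$ confirms that $\varphi_i$ and $\psi_i$ shift the $q$-grading by $\pm 1$, matching the two summands $\{-1\}$ and $\{+1\}$. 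The orthogonality $\psi_j\varphi_i=\delta_{ij}\,\mathrm{id}$ reduces, via the closure relation, to the evaluation of a theta foam with a controlled number of dots, which is dictated by ($\Theta$) and (S).

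For \emph{Square Removal}, the same strategy applies but with four foams $\varphi_i,\psi_i$ ($i=1,2$) built from the two ways of resolving the square into a pair of arcs (vertical and horizontal). The identity (SqR) plays the role of the completeness relation $\varphi_1\psi_1+\varphi_2\psi_2=\mathrm{id}$. The orthogonality identities $\psi_i\varphi_j=0$ for $i\neq j$ follow by applying (CN) or (RD) to the ``neck'' that appears in the middle of the composition and then reducing to theta-foam evaluations; the identities $\psi_i\varphi_i=\mathrm{id}$ similarly reduce to a local computation that uses (3D) and (S). As both summand webs are undecorated arc-pairs, the grading check here is immediate: all four foams have degree zero.

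The routine part is the bookkeeping of dots and the application of the closed-foam evaluation formulas, which are standard once the maps are chosen. The main obstacle is the square case: there the composition $\psi_i\varphi_j$ with $i\neq j$ is a nontrivial closed surface (two linked spheres joined by tubes), and showing it reduces to $0$ requires a careful sequence of (CN) and ($\Theta$) reductions rather than a single relation. Once this one calculation is handled, the whole lemma assembles from the same template as Digon Removal. In fact all computations can be done locally, so by the closure relation it suffices to verify the required equalities for the closed foams obtained by capping off, at which point everything is reduced to a finite check against $\ell=(3D,CN,S,\Theta)$.
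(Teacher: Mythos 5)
Your strategy---exhibiting inclusion/projection foams and verifying biorthogonality and completeness, with (DR) and (SqR) supplying the completeness identities and the closure relation together with ($\Theta$) and (S) supplying orthogonality---is exactly the approach of~\cite{mackaay-vaz}, to which the present paper defers for this lemma without giving a proof of its own. Two small slips worth fixing before this is written out: with your declared types $\varphi_i\colon X\to Y_i$ and $\psi_i\colon Y_i\to X$, biorthogonality should read $\varphi_j\psi_i=\delta_{ij}\,\mathrm{id}_{Y_i}$ and completeness $\sum_i\psi_i\varphi_i=\mathrm{id}_X$ (your $\psi_j\varphi_i$ is not well-typed for $i\ne j$, and $\varphi_i\psi_i$ is an endomorphism of $Y_i$, not of $X$); and the foams realizing digon removal are not the zip/unzip of Figure~\ref{fig:elemfoams}, which are reserved in this paper for building the differential of the link complex, but rather the cup/cap-type foams bounded by a half-disc along a singular circle, as depicted in relation (DR).
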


The construction of the topological complex from a link diagram is well known by now 
and uses the elementary foams in Figure~\ref{fig:elemfoams}, which we call the 'zip' 
and the 'unzip', to build the differential. 
We follow the conventions in~\cite{mackaay-vaz} and read foams from bottom to top 
when interpreted as morphisms.
\begin{figure}[ht!]
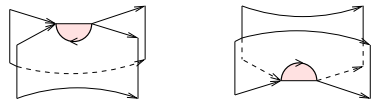

$$
\figins{-18}{0.5}{ssaddle-sl3}
\quad\qquad
\figins{-18}{0.5}{ssaddle_ud-sl3}
$$
\caption{Elementary foams}
\label{fig:elemfoams}
\end{figure}

The tautological functor $C$ from $\foam$ to the category 
$\Modgr$ of graded $\bQ[a,b,c]$-modu\-les maps a closed web 
$\Gamma$ to $C(\Gamma)=\Hom_{\foam}\left(\emptyset,\Gamma\right)$ and, for a 
foam $f$ between two closed webs $\Gamma$ and $\Gamma'$, the $\bQ[a,b,c]$-linear map 
$C(f)$ from $C(\Gamma)$ to $C(\Gamma')$ is the one given by composition. 
The $\bQ[a,b,c]$-module $C(\Gamma)$ is graded and the degree of $C(f)$ is equal to 
$q(f)$.

Denote by $\Link$ the category of oriented links in $S^3$ and ambient isotopy classes 
of oriented link cobordisms properly embedded in $S^3\times [0,1]$ and by $\Modbg$ 
the category of bigraded $\bQ[a,b,c]$-modules. The functor $C$ extends to the 
category $\kom(\foam)$ of chain complexes in $\foam$ and the composite with  
the homology functor defines a projective functor $U_{a,b,c}\colon\Link\to\Modbg$.

The theory described above is equivalent to the one in~\cite{mackaay-vaz} after 
tensoring the latter with $\bQ$.

\section{Deformations of the Khovanov-Rozansky $sl_3$-link homology}\label{sec:KR-3}

\subsection{Review of matrix factorizations}
This subsection contains a brief review of matrix factorizations and the properties 
that will be used throughout this paper. We assume familiarity 
with~\cite{KR}. All the matrix factorizations in this paper are 
$\bZ/2\bZ\times\bZ$-graded. Let $R$ be a polynomial ring over $\bQ$. We take the 
degree of each polynomial to be twice its total degree. This way $R$ is 
$\bZ$-graded. Let $W$ be a homogeneous 
element of $R$ of degree $2m$. A matrix factorization of $W$ over $R$ is a 
$\bZ/2\bZ$-graded free $R$-module $M=M_0\oplus M_1$ with $R$-homomorphisms of 
degree $m$
$$M_0\xra{d_0}M_1\xra{d_1}M_0$$
such that $d_1d_0=W\id_{M_0}$ and $d_0d_1=W\id_{M_1}$. 
The $\bZ$-grading of $R$ induces 
a $\bZ$-grading on $M$. The shift functor $\{k\}$ acts on $M$ as
$$M\{k\}=M_0\{k\}\xra{d_0}M_1\{k\}\xra{d_1}M_0\{k\}.$$

A homomorphism $f\colon M\to M'$ of matrix factorizations of $W$ is a pair of 
maps of the same degree 
$f_i\colon M_i\to M'_i$ ($i=0,1$) such that the diagram
$$\xymatrix{
M_0 \ar[r]^{d_0}\ar[d]_{f_0} & M_1\ar[r]^{d_1}\ar[d]_{f_1} & M_0\ar[d]_{f_0} \\
M'_0 \ar[r]^{d'_0} & M'_1\ar[r]^{d'_1} & M'_0
}$$
commutes. It is an isomorphism of matrix factorizations if $f_0$ and $f_1$ are isomorphisms of the underlying modules.
Denote the set of homomorphisms of matrix factorizations from $M$ to $M'$ by 
$$\Hom_{\mf{}}\left(M,M'\right).$$
It has an $R$-module structure with the action of $R$ given by 
$r(f_0,f_1)=(rf_0,rf_1)$ for $r\in R$.
Matrix factorizations over $R$ with homogeneous potential $W$ and homomorphisms 
of matrix factorizations form a graded additive category, which we denote by 
$\mf{R}(W)$. If $W=0$ we simply write $\mf{R}$.

The free $R$-module $\Hom_R\left(M,M'\right)$ of graded $R$-module homomorphisms 
from $M$ to $M'$ is a 2-complex 
$$\xymatrix{
\Hom_R^0\left(M,M'\right)\ar[r]^D &
\Hom_R^1\left(M,M'\right)\ar[r]^D &
\Hom_R^0\left(M,M'\right)
}$$ where
\begin{eqnarray*}
\Hom_R^0\left(M,M'\right) &=& \Hom_R\left(M_0,M'_0\right)\oplus
\Hom_R\left(M_1,M'_1\right) \\
\Hom_R^1\left(M,M'\right) &=& \Hom_R\left(M_0,M'_1\right)\oplus
\Hom_R\left(M_1,M'_0\right)
\end{eqnarray*}
and for $f$ in $\Hom_R^i\left(M,M'\right)$ the differential acts as
$$Df=d_{M'}f-(-1)^ifd_{M}.$$
We define 
$$\Ext\left(M,M'\right)=
\Ext^0\left(M,M'\right)\oplus\Ext^1\left(M,M'\right)
=\Ker{D}/\Image{D},$$
and write $\Ext_{\{m\}}(M,M')$ for the elements of $\Ext(M,M')$ with $\bZ$-degree $m$.

Note that for $f\in\Hom_{\mf{}}\left(M,M'\right)$ we have $Df=0$. We say that two homomorphisms $f$, $g$ in $\Hom_{\mf{}}\left(M,M'\right)$ are homotopic if there is an element $h$ in $\Hom_R^1\left(M,M'\right)$ such that $f-g=Dh$.

Denote by $\Hom_{\hmf{}}\left(M,M'\right)$ the $R$-module of homotopy classes of homomorphisms of matrix factorizations from $M$ to $M'$ and by $\hmf{R}(W)$ the homotopy category of $\mf{R}(W)$.

We have
\begin{eqnarray*}
\Ext^0(M,M') &\cong & \Hom_{\hmf{}}(M,M') \\
\Ext^1(M,M') &\cong & \Hom_{\hmf{}}(M,M'\brak{1})
\end{eqnarray*}

We denote by $M\brak{1}$ and $M_\bullet$ the factorizations
$$M_1\xra{-d_1}M_0\xra{-d_0}M_1$$
and
$$
\left(M_0\right)^*\xra{-\left(d_1\right)^*}
\left(M_1\right)^*\xra{\left(d_0\right)^*}
\left(M_0\right)^*
$$ 
respectively. Factorization $M\brak{1}$ has potential $W$ while factorization $M_\bullet$ has potential $-W$. 

The tensor product $M\otimes_R M_\bullet$ has potential zero and is therefore a 
2-complex. Denoting by $\hy_{\mf{}}$ the homology of matrix factorizations with 
potential zero we have
$$\Ext\left(M,M'\right)\cong \hy_{\mf{}}\left(M'\otimes_R M_\bullet\right)$$
and, if $M$ is a matrix factorization with  $W=0$, 
$$\Ext(R ,M)\cong \hy_{\mf{}}(M).$$
Both these isomorphisms are up to a global shift in the $\bZ$-grading.

\subsubsection*{Koszul Factorizations}
For $a$, $b\in R$ an \emph{elementary Koszul factorization} $\{a,b\}$ over $R$ with potential $ab$ is a factorization of the form
$$R\xra{a}R\left\{{\scriptstyle\frac{1}{2}}\left(\deg_\bZ b -\deg_\bZ a \right)\right\}\xra{b}R.$$
When we need to emphasize the ring $R$ we write this factorization as $\{a,b\}_R$.
It is well known that the tensor product of matrix factorizations $M_i$ with potentials $W_i$ is a matrix factorization with potential $\sum_iW_i$. We restrict to the case where all the $W_i$ are homogeneous of the same degree. Throughout this paper we use tensor products of elementary Koszul factorizations $\{a_j,b_j\}$ to build bigger matrix factorizations, which we write in the form of a \emph{Koszul matrix} as
$$\left\{
\begin{matrix}
a_1 \ , & b_1 \\
 \vdots & \vdots \\
a_k \ , & b_k
\end{matrix}
\right\}$$
We denote by $\{\mathbf{a},\mathbf{b}\}$ the Koszul matrix which has columns 
$(a_1,\ldots,a_k)$ and $(b_1,\ldots ,b_k)$.

Note that the action of the shift $\brak{1}$ on $\{\mathbf{a},\mathbf{b}\}$ 
is equivalent to switching terms in one line of $\{\mathbf{a},\mathbf{b}\}$:
$$\{\mathbf{a},\mathbf{b}\}\brak{1}=\left\{ 
\begin{matrix}
 \vdots\  & \vdots \\
a_{i-1}\ ,& b_{i-1} \\
  -b_i\ , &  -a_i   \\
a_{i+1}\ ,& b_{i+1} \\
 \vdots\  & \vdots
\end{matrix}
\right\}
\brak{{\scriptstyle\frac{1}{2}}\left(\deg_\bZ b_i -\deg_\bZ a_i \right)}.$$
If we choose a different row to switch terms we get a factorization which is 
isomorphic to this one.

We also have that
$$\{\mathbf{a},\mathbf{b}\}_\bullet
\cong
\{\mathbf{a},-\mathbf{b}\}\brak{k}\left\{s_k\right\}
,$$
where
$$s_k=\sum_{i=1}^{k}\deg_\bZ a_i -\frac{k}{2}\deg_\bZ W.$$

Let $R=\bQ[x_1,\ldots,x_k]$ and $R'=\bQ[x_2,\ldots,x_k]$. Suppose that 
$W=\sum_i a_ib_i\in R'$ and $x_1-b_i\in R'$, for a certain $1\leq i\leq k$. 
Let $\{\mathbf{\hat a}^i,\mathbf{\hat b}^i\}$ be the matrix factorization obtained 
from 
$\{\mathbf{a},\mathbf{b}\}$ by deleting the $i$-th row and substituting $x_1$ 
by $x_1-b_1$.   
\begin{lem}[excluding variables]
\label{lem:exvar}
The matrix factorizations $\{\mathbf{a},\mathbf{b}\}$ and 
$\{\mathbf{\hat a}^i,\mathbf{\hat b}^i\}$ are homotopy equivalent. 
\end{lem}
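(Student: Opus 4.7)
The plan is to use elementary row operations on Koszul factorizations to decouple the $i$-th row, and then to contract the resulting tensor factor.

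First, by permuting rows (which gives an isomorphic Koszul factorization) I may assume $i=k$. Set $c:=x_1-b_k\in R'$, so that $b_k=x_1-c$ and $R=R'[x_1]=R'[b_k]$. The row operation
$$\{a_j,b_j\},\ \{a_k,b_k\}\;\longmapsto\;\{a_j-\lambda b_k,b_j\},\ \{a_k+\lambda b_j,b_k\}$$
preserves the partial potential $a_jb_j+a_kb_k$ for any $\lambda\in R$ and yields an isomorphic Koszul factorization; the analogous dual operation on the $b$-entries also preserves the potential. For each $j<k$, because $R=R'[b_k]$, I can uniquely write $a_j=\mathbf{\hat a}_j^k+\lambda_j b_k$ with $\mathbf{\hat a}_j^k\in R'$ (this is precisely the value of $a_j$ at $x_1=c$, matching the statement). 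Applying the operation with $\lambda=\lambda_j$ replaces $a_j$ by $\mathbf{\hat a}_j^k$; the dual move reduces $b_j$ to $\mathbf{\hat b}_j^k$.

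After all these moves, rows $1,\ldots,k-1$ coincide with the rows of $\{\mathbf{\hat a}^k,\mathbf{\hat b}^k\}$, while the $k$-th row has been modified to some $\{a_k',b_k'\}$. Applying the retraction $x_1\mapsto c$ to the potential identity $W=\sum_j a_jb_j\in R'$ gives $W=\sum_{j<k}\mathbf{\hat a}_j^k\mathbf{\hat b}_j^k+(a_k'b_k')|_{x_1=c}$, which pins down $a_k'b_k'$ modulo $(x_1-c)$; since $x_1-c$ is a non–zero-divisor in $R$, a short calculation forces the $k$-th row to collapse to the form $\{0,x_1-c\}$. Consequently
$$\{\mathbf a,\mathbf b\}\;\cong\;\{\mathbf{\hat a}^k,\mathbf{\hat b}^k\}\otimes\{0,x_1-c\}.$$
The factor $\{0,x_1-c\}$ is the Koszul resolution of $R'=R/(x_1-c)$ as an $R$-module, viewed as a matrix factorization of $0$; tensoring the base-changed factorization $\{\mathbf{\hat a}^k,\mathbf{\hat b}^k\}$ with it is homotopy equivalent to restriction of scalars to $R'$, yielding the desired homotopy equivalence.

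The hardest step is the bookkeeping in the row-reduction: the dual operation on the $b$-entries modifies $b_k$ itself and temporarily destroys its special form $x_1-c$. I expect to address this either by interleaving the $a$- and $b$-reductions row by row, verifying at each stage that $b_k$ retains the form $x_1-(\text{element of }R')$ so that the next decomposition $a_{j+1}=\mathbf{\hat a}_{j+1}^k+\lambda_{j+1}b_k$ remains available, or by inducting on the number of rows not yet in $R'$. In either variant, the decisive ingredient is the hypothesis $W\in R'$, which is exactly what forces the modified $a_k$ to vanish and the last row to collapse into the trivial Koszul form $\{0,x_1-c\}$.
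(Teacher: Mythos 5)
The paper itself gives no proof of this lemma; it simply refers the reader to \cite{KR} (Khovanov--Rozansky, Proposition 9 and its multi-variable extension). So there is no in-paper argument to compare against, and your proposal should be judged on its own merits as a reconstruction of the standard argument.

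Your reconstruction is essentially correct and is the same row-reduction strategy used in \cite{KR}. Two remarks. First, the bookkeeping worry you flag at the end is not actually an obstacle: using the two row operations from the paper's ``Row operations'' lemma with the indices chosen as $[j,k]'_{\lambda}$ (which alters only $a_j$ and $a_k$) and then $[k,j]_{\lambda}$ (which alters only $a_k$ and $b_j$), the entry $b_k=x_1-c$ is never touched. Hence the two-stage reduction --- first bring every $a_j$, $j<k$, into $R'$ by subtracting the unique multiple of $b_k$, then do the same for every $b_j$ --- can be carried out directly with no interleaving or induction. After this, only $a_k$ has changed, and the potential identity $W=\sum_{j<k}\hat a_j^k\hat b_j^k+a_k'(x_1-c)\in R'$, evaluated at $x_1=c$, forces $a_k'(x_1-c)=0$ and hence $a_k'=0$, since $R$ is a domain. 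Second, it is worth stating explicitly over which ring the final homotopy equivalence lives: one ends up with
$$\{\mathbf a,\mathbf b\}\;\cong\;\bigl(\{\hat{\mathbf a}^k,\hat{\mathbf b}^k\}_{R'}\otimes_{R'}R\bigr)\otimes_R\{0,x_1-c\}_R\;=\;\{\hat{\mathbf a}^k,\hat{\mathbf b}^k\}_{R'}\otimes_{R'}\{0,x_1-c\}_R,$$
and the factor $\{0,x_1-c\}_R=\bigl(R\xra{0}R\xra{x_1-c}R\bigr)$ is homotopy equivalent, \emph{over $R'$} (not over $R$ --- the contracting homotopy $p\mapsto(p(c)-p)/(x_1-c)$ is only $R'$-linear), to the one-dimensional trivial factorization $R'\to 0\to R'$. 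Tensoring this $R'$-linear homotopy equivalence with $\id_{\{\hat{\mathbf a}^k,\hat{\mathbf b}^k\}_{R'}}$ gives exactly the homotopy equivalence over $R'$ that the lemma asserts. With these two clarifications, your argument is a correct and complete proof.
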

\noindent In \cite{KR} one can find the proof of this lemma and its generalization 
with several variables. 

The following lemma contains three particular cases of proposition 3 in~\cite{KR} 
(see also \cite{KR2}):
\begin{lem}[Row operations]
We have the following isomorphisms of matrix factorizations 
$$\left\{
\begin{matrix}
a_i \ , & b_i \\
a_j \ , & b_j \\
\end{matrix}
\right\} 
\stackrel{[i,j]_\lambda}{\cong}
\left\{
\begin{matrix}
a_i-\lambda a_j \ , & b_i \\
a_j \ , & b_j+\lambda b_i
\end{matrix}
\right\},\qquad\left\{
\begin{matrix}
a_i \ , & b_i \\
a_j \ , & b_j \\
\end{matrix}
\right\} 
\stackrel{[i,j]'_\lambda}{\cong}
\left\{
\begin{matrix}
a_i+\lambda b_j \ , & b_i \\
a_j-\lambda b_i \ , & b_j
\end{matrix}
\right\}$$ 
for $\lambda\in R$. If $\lambda$ is invertible in $R$, we also have
$$\left\{ a_i\ ,\ b_j \right\}\stackrel{[i]_\lambda}{\cong}
\left\{\lambda a_i\ ,\ \lambda^{-1}b_i \right\}.$$
\end{lem}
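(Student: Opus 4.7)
The plan is to prove each of the three isomorphisms by exhibiting explicit invertible $R$-module maps $(f_0,f_1)$ that commute with the differentials of the two matrix factorizations. As a preliminary, I would check that the two sides of each claimed isomorphism have the same potential: for $[i,j]_\lambda$, $(a_i-\lambda a_j)b_i+a_j(b_j+\lambda b_i)=a_ib_i+a_jb_j$; for $[i,j]'_\lambda$, $(a_i+\lambda b_j)b_i+(a_j-\lambda b_i)b_j=a_ib_i+a_jb_j$; and for $[i]_\lambda$ it is immediate that $(\lambda a_i)(\lambda^{-1}b_i)=a_ib_i$. These are all one-line calculations.

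For $[i,j]_\lambda$, since the operation only touches rows $i$ and $j$, I would restrict to the rank-two Koszul factorization $\{(a_i,b_i),(a_j,b_j)\}$; all other rows of $\{\mathbf{a},\mathbf{b}\}$ tensor through unchanged. Writing its underlying modules and differentials explicitly, the differential $d_0$ becomes a $2\times 2$ matrix whose nonzero entries are $a_i,a_j,\pm b_i,\pm b_j$ (with signs coming from the standard tensor product convention for matrix factorizations), and similarly for $d_1$. The desired row operation is then realized by conjugating $d_0$ and $d_1$ by unit upper- and lower-triangular matrices having a single off-diagonal entry $\pm\lambda$, chosen so that $f_1 d_0=d'_0 f_0$ and $f_0 d_1=d'_1 f_1$; invertibility is immediate from triangularity, so these are honest isomorphisms of matrix factorizations, not merely homotopy equivalences. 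The isomorphism $[i,j]'_\lambda$ is handled by the same recipe with suitably modified triangular intertwiners, reflecting the fact that $\lambda$ is now added to an $a$-entry via a $b$-entry, so the conjugating matrices must shift between the even and odd parts differently.

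For the scaling isomorphism $[i]_\lambda$, one simply takes $f_0=\id$ and $f_1=\lambda^{-1}\id$ (or $f_0=\lambda\id$ and $f_1=\id$); commutativity of the Koszul square with differentials $a_i,b_i$ on the source and $\lambda a_i,\lambda^{-1}b_i$ on the target is automatic, and invertibility uses $\lambda\in R^\times$. The main bookkeeping obstacle across all three verifications is checking that the intertwiners $f_0,f_1$ are homogeneous of $\bZ$-degree zero with respect to the grading shifts built into the definition of an elementary Koszul factorization. This reduces to the observation that all rows share a common potential degree, forcing $\deg a_i-\deg a_j=\deg b_j-\deg b_i$, which in turn pins down the internal degree carried by $\lambda$ and guarantees that each $\pm\lambda$ entry lives in the correct graded piece. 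Once this degree calculus is organized, the lemma reduces to a direct $2\times 2$ matrix computation and an appeal to Lemma~\ref{lem:exvar} is not needed.
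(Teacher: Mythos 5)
Your proposal is correct, and it fills in a proof that the paper itself does not supply: the paper simply cites \cite{KR} and \cite{KR2} for this lemma. Your approach — restricting to the relevant two rows, writing the $2\times 2$ differentials $D_0$, $D_1$ explicitly, and conjugating by unit-triangular matrices — is the natural one and is in fact confirmed by the paper later: in the proof of Corollary~\ref{cor:swap} the paper displays precisely the intertwiners
$[1,2]_\lambda=\bigl(I,\,\begin{psmallmatrix}1&-\lambda\\0&1\end{psmallmatrix}\bigr)$ and $[1,2]'_\lambda=\bigl(\begin{psmallmatrix}1&0\\-\lambda&1\end{psmallmatrix},\,I\bigr)$, matching what you describe (with the paper's sign convention $D_0=\begin{psmallmatrix}a_i&b_j\\a_j&-b_i\end{psmallmatrix}$, $D_1=\begin{psmallmatrix}b_i&b_j\\a_j&-a_i\end{psmallmatrix}$). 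Your potential checks, the observation that the other rows tensor through by the identity, the degree bookkeeping via $\deg a_i+\deg b_i=\deg a_j+\deg b_j$, and the remark that these are genuine isomorphisms rather than homotopy equivalences are all correct. The only caveat is that you should pin down the sign convention for the tensor-product differential before asserting the exact entries of the triangular intertwiners (with the opposite convention the same $\lambda$ may enter doubled or not at all), but you flag this explicitly and it is a routine verification.
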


Recall that a sequence $(a_1,a_2,\ldots ,a_k)$ is called {\em regular} in $R$ 
if $a_j$ is not a zero divisor in $R/_{(a_1,a_2,\ldots , a_{j-1})R}$, for 
$j=1,\ldots,k$.
The proof of the following lemma can be found in \cite{KR2}.
\begin{lem}\label{lem:regseq-iso}
Let $\mathbf{b}=(b_1,b_2, \ldots ,b_k)$, $\mathbf{a}=(a_1,a_2, \ldots ,a_k)$ and $\mathbf{a'}=(a'_1,a'_2, \ldots ,a'_k)$ be sequences in $R$.
If $\mathbf{b}$ is regular and $\sum_ia_ib_i=\sum_ia'_ib_i$ then the factorizations 
$$
\{\mathbf{a}\ , \mathbf{b} \}\ 
\text{ and }\
\{\mathbf{a'}\ , \mathbf{b}\}
$$ are isomorphic.
\end{lem}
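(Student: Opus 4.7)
The plan is to use the regularity of $\mathbf{b}$ to express the difference $\mathbf{a}-\mathbf{a'}$ as an antisymmetric Koszul-type syzygy, and then realize the passage from $\{\mathbf{a},\mathbf{b}\}$ to $\{\mathbf{a'},\mathbf{b}\}$ as a composition of elementary row operations $[i,j]'_\lambda$ from the previous lemma.

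First, set $c_i=a'_i-a_i\in R$. The hypothesis $\sum_i a_ib_i=\sum_i a'_ib_i$ becomes
$$\sum_{i=1}^k c_i b_i=0.$$
Because $\mathbf{b}=(b_1,\ldots,b_k)$ is a regular sequence in $R$, the Koszul complex $K_\bullet(\mathbf{b})$ is acyclic in positive homological degree; in particular $H_1(K_\bullet(\mathbf{b}))=0$. Concretely this means that every syzygy $(c_1,\ldots,c_k)$ of $(b_1,\ldots,b_k)$ lies in the $R$-submodule generated by the trivial Koszul syzygies $b_je_i-b_ie_j$. Hence there exists an antisymmetric matrix $(\lambda_{ij})_{1\leq i,j\leq k}$ with entries in $R$, i.e.\ $\lambda_{ji}=-\lambda_{ij}$, such that
$$c_i=\sum_{j\neq i}\lambda_{ij}\,b_j\qquad\text{for all }i=1,\ldots,k.$$

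Second, I apply the isomorphisms $[i,j]'_{\lambda_{ij}}$ of the previous lemma to the pairs of rows $(i,j)$ with $i<j$ of $\{\mathbf{a},\mathbf{b}\}$. The operation $[i,j]'_{\lambda_{ij}}$ replaces $a_i$ by $a_i+\lambda_{ij}b_j$ and $a_j$ by $a_j-\lambda_{ij}b_i=a_j+\lambda_{ji}b_i$, and leaves the $b_r$ untouched. Since the scalars $\lambda_{ij}$ and the entries $b_r$ live in $R$ and are independent of the left column, these operations commute and can be composed in any order. After performing all of them the new left column in row $i$ equals
$$a_i+\sum_{j>i}\lambda_{ij}b_j+\sum_{j<i}\lambda_{ij}b_j=a_i+\sum_{j\neq i}\lambda_{ij}b_j=a_i+c_i=a'_i,$$
so the composite isomorphism sends $\{\mathbf{a},\mathbf{b}\}$ to $\{\mathbf{a'},\mathbf{b}\}$, as required.

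The main obstacle is the commutative-algebra input in the first step: the vanishing of $H_1$ of the Koszul complex of a regular sequence, which is precisely what guarantees that any relation $\sum_i c_ib_i=0$ can be encoded in an antisymmetric matrix $(\lambda_{ij})$. Once this is available, the rest of the argument is purely formal manipulation of Koszul matrices via the already established row-operation isomorphisms.
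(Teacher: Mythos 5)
Your proof is correct. The paper does not give its own argument for this lemma (it simply cites \cite{KR2}), but the standard proof found there is essentially the one you give: the hypothesis $\sum_i(a'_i-a_i)b_i=0$ together with the acyclicity of the Koszul complex of the regular sequence $\mathbf{b}$ in positive degrees produces an antisymmetric matrix $(\lambda_{ij})$ with $a'_i-a_i=\sum_{j\neq i}\lambda_{ij}b_j$, and the row operations $[i,j]'_{\lambda_{ij}}$ then assemble into an isomorphism $\{\mathbf{a},\mathbf{b}\}\cong\{\mathbf{a}',\mathbf{b}\}$ since they leave the right column fixed and alter each $a_r$ by exactly $a'_r-a_r$.
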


A factorization $M$ with potential $W$ is said to be \emph{contractible} if it is isomorphic to a direct sum of 
$$R\xra{1}R\xra{W}\{-{\scriptstyle\frac{1}{2}}\deg_\bZ W \}R\quad 
\text{and}\quad
R\xra{W}R\{{\scriptstyle\frac{1}{2}}\deg_\bZ W   \}\xra{1}R.$$

\subsection{Khovanov-Rozansky homology}\label{ssec:KR-abc}

\begin{defn}
A \emph{KR-web} is a trivalent graph with two types of edges, oriented edges and 
unoriented thick edges, such that each oriented edge has at least one mark. We allow 
open webs which have oriented edges with only one endpoint glued to the rest of the 
graph. Every thick edge has exactly two oriented edges entering one endpoint and 
two leaving the other. 
\end{defn} 

Let $\{1,2,\ldots ,k\}$ be the set of marks in a KR-web $\Gamma$ and $\mathbf{x}$ denote the set 
$\{x_1,x_2,\ldots ,x_k\}$. Denote by $R$ the polynomial ring $\bQ[a,b,c,\mathbf{x}]$, where $a$, $b$, $c$ are formal parameters. We define a $q$-grading in $R$ declaring that 
$$q(1)=0,\quad q(x_i)=2,\,\,\,\text{for all}\,\, i,\quad q(a)=2,\quad q(b)=4,
\quad q(c)=6.$$

The universal rational Khovanov-Rozansky theory for $N=3$ is related to the 
polynomial 
$$p(x)=x^4 -\frac{4a}{3} x^3 - 2b x^2 - 4c x.$$

As in \cite{KR2} we denote by $\hat\Gamma$ the matrix factorization associated to 
a KR-web $\Gamma$. To an oriented arc with marks $x$ and $y$, as in 
Figure~\ref{fig:arc-r}, 
\begin{figure}[ht!]
\labellist
\small\hair 2pt
\pinlabel $y$ at -15 5
\pinlabel $x$ at 112 6
\endlabellist
\centering
\figs{0.5}{arc-r}
\caption{An oriented arc}
\label{fig:arc-r}
\end{figure}
we assign the potential
$$W=p(x)-p(y)=x^4-y^4 -\frac{4a}{3} \left(x^3-y^3 \right) - 2b\left(x^2-y^2\right) - 
4c\left(x-y\right)
$$
and the \emph{arc factorization} $\hatarc$, which is given by the Koszul factorization
$$\hatarc=\{\pi_{xy}\ ,\ x-y\}=
\left(
R\xra{\pi_{xy}}R\{-2\}\xra{x-y}R
\right)$$
where
$$\pi_{xy}=\frac{x^4-y^4}{x-y}-\frac{4a}{3}\frac{x^3-y^3}{x-y}-2b\frac{x^2-y^2}{x-y}-4c.$$

\begin{figure}[ht!]
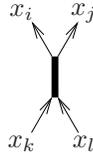

\labellist
\small\hair 2pt
\pinlabel $x_i$ at -8 118
\pinlabel $x_j$ at 52 117
\pinlabel $x_k$ at -8 -9
\pinlabel $x_l$ at 52 -9
\endlabellist
\centering
\figs{0.4}{fatedge}
\caption{A thick edge}
\label{fig:fatedge}
\end{figure}
To the thick edge in Figure~\ref{fig:fatedge} we associate the 
potential $W=p(x_i)+p(x_j)-p(x_k)-p(x_l)$
and the \emph{dumbell factorization} $\hatFatEdge$, which is defined as 
the tensor product of the factorizations
$$
R\{-1\} \xra{u_{ijkl}} R\{-3\}\xra{x_i+x_j-x_k-x_l} R\{-1\}
$$
and
$$
R \xra{v_{ijkl}}  R\xra{x_ix_j-x_kx_l} R
$$
where
\begin{eqnarray*}
u_{ijkl} &=& \frac{(x_i+x_j)^4-(x_k+x_l)^4}{x_i+x_j-x_k-x_l}-(2b+4x_ix_j)(x_i+x_j+x_k+x_l) \\
  & & -\frac{4a}{3}\left(\frac{(x_i+x_j)^3-(x_k+x_l)^3}{x_i+x_j-x_k-x_l}-3x_ix_j\right)-4c,\\
v_{ijkl} &=& 2(x_ix_j+x_kx_l)-4(x_k+x_l)^2 + 4a (x_k+x_l) + 4b.
\end{eqnarray*}
We can write the dumbell factorization as the Koszul matrix 
$$\hatFatEdge=
\left\{
\begin{matrix}
u_{ijkl} \ , & x_i+x_j-x_k-x_l \\
v_{ijkl} \ , & x_ix_j-x_kx_l \\
\end{matrix}
\right\} \{-1\}.$$

The matrix factorization $\hat\Gamma$ of a general KR-web $\Gamma$ composed by 
$E$ arcs and $T$ thick edges is built from the arc and the dumbell 
factorizations as
$$\hat\Gamma=\bigotimes\limits_{e\in E}\hatarc_e\otimes\bigotimes
\limits_{t\in T}\hatFatEdge_t,$$
which is a matrix factorization with potential $W=\sum\epsilon_i p(x_{i})$ 
where $i$ runs over all free ends. 
By convention $\epsilon_i=1$ if the corresponding arc is 
oriented outward and $\epsilon_i=-1$ in the opposite case.
  
\subsubsection{Maps $\chi_0$ and $\chi_1$}\label{ssec:diffs}
Let $\hatorsmooth$ and $\hatFatEdge$ denote the factorizations in 
Figure~\ref{fig:maps-chi}.
\begin{figure}[ht!]
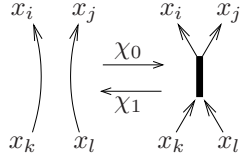

\labellist
\small\hair 2pt
\pinlabel $x_i$ at -8 118
\pinlabel $x_j$ at 52 117
\pinlabel $x_k$ at -8 -10
\pinlabel $x_l$ at 52 -10
\pinlabel $x_i$ at 136 118
\pinlabel $x_j$ at 192 117
\pinlabel $x_k$ at 136 -10
\pinlabel $x_l$ at 192 -10
\pinlabel $\chi_0$ at 94 80
\pinlabel $\chi_1$ at 94 28
\endlabellist
\centering
\figs{0.4}{maps-chi}
\caption{Maps $\chi_0$ and $\chi_1$}
\label{fig:maps-chi}
\end{figure}

\noindent The factorization $\hatorsmooth$ is given by 
$$
\begin{pmatrix}R \\ R\{-4\} \end{pmatrix}
\stackrel{P_0}{\lra}
\begin{pmatrix}R\{-2\} \\ R\{-2\} \end{pmatrix}
\stackrel{P_1}{\lra}
\begin{pmatrix}R \\ R\{-4\} \end{pmatrix}
$$
with
$$
P_0=\begin{pmatrix}\pi_{ik} & x_j-x_l \\ \pi_{jl} & -x_i+x_k\end{pmatrix},\qquad
P_1=\begin{pmatrix}x_i-x_k & x_j-x_l \\ \pi_{jl} & -\pi_{ik}\end{pmatrix}.
$$
The factorization $\hatFatEdge$ is given by 
$$
\begin{pmatrix}R\{-1\} \\ R\{-3\} \end{pmatrix}
\stackrel{Q_0}{\lra}
\begin{pmatrix}R\{-3\} \\ R\{-1\} \end{pmatrix}
\stackrel{Q_1}{\lra}
\begin{pmatrix}R\{-1\} \\ R\{-3\} \end{pmatrix}
$$
with
$$
Q_0=\begin{pmatrix}u_{ijkl} & x_ix_j-x_kx_l \\ v_{ijkl} & -x_i-x_j+x_k+x_l
\end{pmatrix},\qquad
Q_1=\begin{pmatrix}x_i+x_j-x_k-x_l & x_ix_j-x_kx_l \\ v_{ijkl} & -u_{ijkl}
\end{pmatrix}.
$$
The maps $\chi_0$ and $\chi_1$ can be described by the pairs of matrices:
$$
\chi_0 = \left(
2\begin{pmatrix}-x_k+x_j & 0 \\ -\alpha & -1 \end{pmatrix},
2\begin{pmatrix}-x_k & x_j \\ 1 & -1 \end{pmatrix}
\right)\quad \text{and}\quad
\chi_1 = \left(
\begin{pmatrix} 1 & 0 \\ -\alpha & x_k-x_j \end{pmatrix},
\begin{pmatrix} 1 & x_j \\ 1 & x_k \end{pmatrix}
\right)
$$
where 
$$\alpha=-v_{ijkl}+\frac{u_{ijkl}+x_iv_{ijkl}-\pi_{jl}}{x_i-x_k}.$$

\noindent The maps $\chi_0$ and $\chi_1$ have degree 1. A straightforward calculation 
shows that $\chi_0$ and $\chi_1$ are homomorphisms of matrix factorizations, and that
$$\chi_0\chi_1=m(2x_j-2x_k)\id(\hatFatEdge)
\qquad
\chi_1\chi_0=m(2x_j-2x_k)\id(\hatorsmooth),
$$
where $m(x_*)$ is multiplication by $x_*$.
Note that we are using a sign convention that is different from the one 
in~\cite{KR}. 
This choice was made to match the signs in the \emph{Digon Removal} relation 
in~\cite{mackaay-vaz}. Note also that the map $\chi_0$ is twice its 
analogue in~\cite{KR}. This way we obtain a theory that is equivalent to the 
one in~\cite{KR} and consistent with our choice of normalization in 
Section~\ref{sec:foam}.

There is another description of the maps $\chi_0$ and $\chi_1$ when the 
webs in Figure~\ref{fig:maps-chi} are closed. In this case both 
$\hatorsmooth$ and $\hatFatEdge$ have potential zero. Acting with a row 
operation on $\hatorsmooth$ we get
$$\hatorsmooth \cong \left\{ 
\begin{matrix}
\pi_{ik}, & x_i+x_j-x_k-x_l \\
\pi_{jl}-\pi_{ik}, & x_j-x_l
\end{matrix}
\right\}.$$
Excluding the variable $x_k$ from $\hatorsmooth$ and from $\hatFatEdge$ 
yields
$$\hatorsmooth\cong\left\{ \pi_{jl}-\pi_{ik},\ x_j-x_l\right\}_R,\qquad
\hatFatEdge\cong\left\{ v_{ijkl},\ (x_i-x_l)(x_j-x_l)\right\}_R,$$
with $R=\bQ[x_i,x_j,x_k,x_l]/(x_k+x_i+x_j-x_l)$. It is straightforward to 
check that $\chi_0$ and $\chi_1$ correspond to the maps $(-2(x_i-x_l), -2)$ 
and $(1, x_i-x_l)$ respectively. This description will be useful in 
Section~\ref{sec:iso}.

\medskip

For a link $L$, we denote by $\KR_{a,b,c}(L)$ the universal rational 
Khovanov-Rozansky cochain complex, which can be obtained from the data above in 
the same way as in \cite{KR}. Let $\HKR_{a,b,c}(L)$ denote the universal rational 
Khovanov-Rozansky homology. We have
$$\HKR_{a,b,c}(\unknot)\cong
\left(\bQ[x,a,b,c]_{/x^3-ax^2-bx-c}\right)
\brak{1}\{-2\}.$$

\subsection{MOY web moves}

One of the main features of the Khovanov-Rozansky theory is the cate\-gorification of 
the set of MOY web moves, which for $n=3$ are described by the homotopy equivalences 
below.
\begin{lem}\label{lem:KR-moy}
We have the following direct sum decompositions:
\begin{eqnarray}
\figins{-13}{0.5}{digon1-KR}
&\cong&
\figins{-13}{0.5}{dr1-KR}\{-1\}
\oplus
\figins{-13}{0.5}{dr1-KR}\{1\}\ ,
\\
\figins{-13}{0.5}{digon2-KR}
&\cong&
\figins{-13}{0.5}{dr2-KR}\brak{1}\{-1\}
\oplus
\figins{-13}{0.5}{dr2-KR}\brak{1}\{1\}\ ,
\\
\figins{-12}{0.455}{square1-KR}
&\cong&
\figins{-12}{0.45}{sqr1a}\brak{1}
\oplus
\figins{-12}{0.45}{sqr1b}\ ,
\end{eqnarray}
\begin{equation}
\figins{-20}{0.7}{square2a-KR}\
\oplus
\figins{-20}{0.7}{sqr2bb-KR}
\cong
\figins{-20}{0.7}{square2b-KR}\
\oplus
\figins{-20}{0.7}{sqr2aa-KR}\ .
\end{equation}
\end{lem}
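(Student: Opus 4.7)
The plan is to verify each decomposition by writing down the tensor product matrix factorization explicitly, reducing it via the tools of Section~3.1 (exclusion of variables, row operations, and Lemma~\ref{lem:regseq-iso}), and then recognizing the result as the predicted direct sum. Throughout, one should work over the ring $R=\bQ[a,b,c,\mathbf{x}]$ localized to the marks appearing, and freely use that the potential on a closed sub-web is zero so that the dumbell and arc factorizations assemble into a $2$-complex whose potential telescopes.

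For the digon decompositions (1) and (2), I would tensor two dumbell factorizations along the two internal arcs. After writing out the Koszul matrix (four rows: two $u$-rows and two $v$-rows from the two thick edges, together with the $\pi$-rows from the internal arcs), Lemma~\ref{lem:exvar} eliminates the two internal marks. Applying the row operations of the second lemma then brings the remaining Koszul matrix into a form where one row reads $\{\,\ast\,,\;x-y\,\}$, reproducing the arc factorization $\hatarc$, while the other row reduces to a Koszul factorization of the shape $\{f(x,y),\,x-y\}$ with $f$ a symmetric polynomial that splits as a product of two linear factors in an auxiliary variable, producing the two $q$-grading shifts $\{-1\}$ and $\{+1\}$. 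The shift $\brak{1}$ in (2) reflects that the orientation of the digon forces one to pass to $M\brak{1}$, which by the rule $\{\mathbf{a},\mathbf{b}\}\brak{1}$ can be read off from swapping a row.

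For the square decomposition (3), the strategy is the same: tensor the four dumbell/arc factorizations, eliminate the four internal variables via Lemma~\ref{lem:exvar}, and bring the Koszul matrix into upper-triangular form by row operations. The resulting factorization has one entry that is the difference of two linear polynomials in boundary marks; factoring this difference produces the two summands, corresponding to the two resolutions of the square into vertical and horizontal pairs of arcs, with the correct shift $\brak{1}$ from reversing a Koszul row.

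The genuine obstacle is (4), because neither side is contained in the other. The cleanest route is to apply (1)--(3) to both sides and check that they produce a common refinement. Concretely, applying (3) to one square and digon-removing the resulting bigons via (1)--(2) reduces each side to a direct sum of ``identity'' matchings of the four external strands with determined graded multiplicities; comparing term by term with Lemma~\ref{lem:regseq-iso} to identify factorizations with the same Koszul data gives the isomorphism. The main care is bookkeeping of the $\brak{1}$ and $\{k\}$ shifts: one must track the sign conventions in $\{\mathbf{a},\mathbf{b}\}\brak{1}$ and in $M_\bullet$ consistently, since the potentials on opposite sides differ by sign and the intermediate isomorphisms are only canonical up to the row operations of the second lemma. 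Once the bookkeeping matches, (4) follows from (1)--(3) and the Krull--Schmidt property of $\hmf{R}(W)$.
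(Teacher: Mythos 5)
The paper itself does not prove Lemma~\ref{lem:KR-moy}: it is quoted from Khovanov--Rozansky~\cite{KR}, and the only content the authors add is the observation that relation~(4) follows formally from the two triple-edge decompositions
$\widehat{\Gamma}_a\cong\widehat{\Gamma}_b\oplus\hat\Upsilon$ and
$\widehat{\Gamma}_c\cong\widehat{\Gamma}_d\oplus\hat\Upsilon$,
after which they define the triple-edge factorization $\hat\Upsilon$. Your proposal is therefore a genuine proof attempt where the paper offers none, and for~(1)--(3) the Koszul-matrix strategy (tensor, exclude internal marks via Lemma~\ref{lem:exvar}, simplify by row operations, recognize the two shifted summands) is indeed the route taken in~\cite{KR}; you should note, though, that the digon factorization in~(1)--(2) is built from a \emph{single} dumbell factorization plus arc factorizations, not ``two dumbell factorizations,'' so the matrix you are reducing has fewer rows than you describe, and the two $q$-shifts come from the two-dimensional cokernel of multiplication by $x_i-x_j$ on the thick-edge variables rather than from a quadratic splitting as you suggest.

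The real problem is your treatment of~(4). You claim it follows from~(1)--(3) by applying those decompositions to both sides and invoking Krull--Schmidt. This is where the argument breaks down: the webs $\Gamma_a$ and $\Gamma_c$ appearing in~(4) are not of the form to which~(3) applies directly (they are not the square already decomposed in~(3), but the ``square-switch'' configurations), and after any sequence of bigon/square removals you do not land on a common list of indecomposables without already knowing that both contain the triple-edge summand $\hat\Upsilon$. The decategorified MOY relation~IV is indeed a consequence of relations~I--III in the $sl_3$ skein module, but at the categorified level the direct-sum decomposition requires the extra device of the $\Lambda^3$-edge matrix factorization: once one has
$\widehat{\Gamma}_a\cong\widehat{\Gamma}_b\oplus\hat\Upsilon$ and
$\widehat{\Gamma}_c\cong\widehat{\Gamma}_d\oplus\hat\Upsilon$,
relation~(4) is immediate by adding $\widehat{\Gamma}_d$ to the first and $\widehat{\Gamma}_b$ to the second, but these two triple-edge isomorphisms are themselves nontrivial computations (done in~\cite{KR} by variable exclusion on $\hat\Upsilon$) and do not reduce to~(1)--(3). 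So your plan is missing the key intermediate object $\hat\Upsilon$, and the Krull--Schmidt shortcut, as stated, does not close the gap.
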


The last relation is a consequence of two relations involving a triple edge
$$
\figins{-20}{0.7}{square2a-KR}
\cong 
\figins{-20}{0.7}{sqr2bb-KR}\oplus
\figins{-20}{0.7}{trpl-KR}
\qquad , \qquad
\figins{-20}{0.7}{square2b-KR}
\cong 
\figins{-20}{0.7}{sqr2aa-KR}\oplus
\figins{-20}{0.7}{trpl-KR}.
$$ 

\medskip

The factorization assigned to the triple edge in Figure~\ref{fig:trp-KR} has
potential
$$W=p(x_1)+p(x_2)+p(x_3)-p(x_4)-p(x_5)-p(x_6).$$
\begin{figure}[ht!]
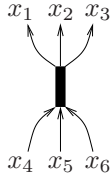

\labellist
\small\hair 2pt
\pinlabel $x_1$ at -8 196
\pinlabel $x_2$ at 48 196
\pinlabel $x_3$ at 100 196
\pinlabel $x_4$ at -8 -16
\pinlabel $x_5$ at 48 -16
\pinlabel $x_6$ at 100 -16
\endlabellist
\centering
\figs{0.28}{trpl_edge}
\caption{Triple edge factorization}
\label{fig:trp-KR}
\end{figure}

Let $h$ be the unique three-variable polynomial such that
$$h(x+y+z,xy+xz+yz,xyz)=p(x)+p(y)+p(z)$$
and let
\begin{gather*}
e_1=x_1+x_2+x_3,
\qquad 
e_2=x_1x_2+x_1x_3+x_2x_3,
\quad
e_3=x_1x_2x_3, \\
s_1=x_4+x_5+x_6,
\qquad 
s_2=x_4x_5+x_4x_6+x_5x_6,
\quad
s_3=x_4x_5x_6.
\end{gather*}
Define
\begin{eqnarray*}
h_1 &=& \frac{h(e_1,e_2,e_3)-h(s_1,e_2,e_3)}{e_1-s_1} \\
h_2 &=& \frac{h(s_1,e_2,e_3)-h(s_1,s_2,e_3)}{e_2-s_2} \\
h_3 &=& \frac{h(s_1,s_2,e_3)-h(s_1,s_2,s_3)}{e_3-s_3}
\end{eqnarray*}
so that we have $W=h_1(e_1-s_1)+h_2(e_2-s_2)+h_3(e_3-s_3)$. The matrix factorization 
$\hat\Upsilon$ corresponding to the triple edge is defined by the Koszul 
matrix
$$\hat\Upsilon=\left\{
\begin{matrix}
h_1\ ,& e_1-s_1 \\
h_2\ ,& e_2-s_2 \\
h_3\ ,& e_3-s_3
\end{matrix}\right\}_R\{-3\},$$
where $R=\bQ[a,b,c,x_1,\ldots ,x_6]$. 
The matrix factorization $\hat\Upsilon$ is the tensor product of 
the matrix factorizations
$$R\xra{h_i}R\{2i-4\}\xra{e_i-s_i}R,\qquad i=1,2,3,$$
shifted down by 3.

\subsection{Cobordisms}\label{ssec:cob-mf}
In this subsection we show which homomorphisms of matrix factorizations we 
associate to the elementary singular KR-cobordisms. 
We do not know if these can be put together in a well-defined way for 
arbitrary singular KR-cobordisms. However, we will prove that that is possible 
for arbitrary ordinary singular cobordisms in Section~\ref{sec:iso}. 

Recall that the elementary KR-cobordisms are the \emph{zip}, the \emph{unzip} 
(see Figure~\ref{fig:maps-chi}) and the elementary cobordisms in 
Figure~\ref{fig:cobs}. 
To the zip and the unzip we associate the maps $\chi_0$ and $\chi_1$, 
respectively, as defined in Subsection~\ref{ssec:diffs}. For each 
elementary cobordism in Figure~\ref{fig:cobs} we define a 
homomorphism of matrix factorizations as below, 
following Khovanov and Rozansky~\cite{KR}.
\begin{figure}[ht!]
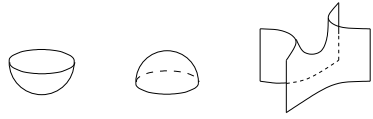

\centering
\figins{0}{0.25}{cup}\qquad
\figins{0}{0.25}{cap}\qquad
\figins{-7}{0.6}{saddle}
\caption{Elementary cobordisms}
\label{fig:cobs}
\end{figure}

\noindent The \emph{unit} and the \emph{trace} map 
$$\imath\colon\bQ[a,b,c]\brak{1} \to \hatunknot$$
$$\varepsilon\colon\hatunknot\to \bQ[a,b,c]\brak{1}$$
are the homomorphisms of matrix factorizations induced by the 
maps (denoted by the same symbols)
$$\begin{array}{ll}
\imath\colon\bQ[a,b,c]\to\bQ[a,b,c][X]_{/ X^3-aX^2-bX-c}\{-2\}, &  1\mapsto 1 \\
\\
\varepsilon\colon\bQ[a,b,c][X]_{/ X^3-aX^2-bX-c}\{-2\}\to\bQ[a,b,c], &  X^k\mapsto \begin{cases}
-\frac{1}{4}, & k=2 \\
\ \ 0 , & k<2 \end{cases}
\end{array}$$
using the isomorphisms 
$$\hat\emptyset\cong\bQ[a,b,c]\to 0\to\bQ[a,b,c]$$ 
and
$$\hatunknot\cong 0\to\bQ[a,b,c][X]_{/ X^3-aX^2-bX-c}\{-2\}\to 0.$$

Let $\hattwoedgesop$ and $\hathtwoedgesop$ be the factorizations in 
Figure~\ref{fig:saddle}.
\begin{figure}[ht!]
\labellist
\small\hair 2pt
\pinlabel $x_1$ at -4 89
\pinlabel $x_2$ at 82 89
\pinlabel $x_3$ at -4 -6
\pinlabel $x_4$ at 82 -6
\pinlabel $x_1$ at 170 89
\pinlabel $x_2$ at 262 89
\pinlabel $x_3$ at 170 -6
\pinlabel $x_4$ at 262 -6
\pinlabel $\eta$ at 128 54
\endlabellist
\centering
\figs{0.4}{saddle-mf}
\caption{Saddle point homomorphism}
\label{fig:saddle}
\end{figure}

The matrix factorization $\hattwoedgesop$ is given by 
$$
\begin{pmatrix}R \\ R\{-4\} \end{pmatrix}
\xra{\begin{pmatrix}
\pi_{13} & x_4-x_2\\
\pi_{24} & x_3-x_1 
\end{pmatrix}}
\begin{pmatrix}R\{-2\} \\ R\{-2\} \end{pmatrix}
\xra{\begin{pmatrix}
x_1-x_3  & x_4-x_2\\
\pi_{24} & -\pi_{13}
\end{pmatrix}}
\begin{pmatrix}R \\R\{-4\} \end{pmatrix}
$$
and 
$\hathtwoedgesop\brak{1}$ is given by 
$$
\begin{pmatrix}R\{-2\} \\ R\{-2\}  \end{pmatrix}
\xra{\begin{pmatrix}
  x_2-x_1  & x_3-x_4 \\
 -\pi_{34} & \pi_{12}
\end{pmatrix}}
\begin{pmatrix}R \\ R\{-4\} \end{pmatrix}
\xra{\begin{pmatrix}
 -\pi_{12} & x_3-x_4\\
 -\pi_{34} & x_1-x_2 
\end{pmatrix}}
\begin{pmatrix}R\{-2\} \\ R\{-2\} \end{pmatrix}
$$

\noindent To the saddle cobordism between the webs $\twoedgesop$ and $\htwoedgesop$ we associate the homomorphism of matrix factorizations $\eta\colon\hattwoedgesop\to \hathtwoedgesop\brak{1}$ described by the pair of matrices
$$\eta_0=
\begin{pmatrix}
 e_{123}+e_{124} & 1 \\
-e_{134}-e_{234} & 1 
\end{pmatrix},
\qquad
\eta_1=
\begin{pmatrix}
 -1 & 1 \\
-e_{123}-e_{234} & -e_{134}-e_{123} 
\end{pmatrix}
$$
where 
\begin{eqnarray*}
e_{ijk} &=&
\frac{(x_k-x_j)p(x_i)+(x_i-x_k)p(x_j)+(x_j-x_i)p(x_k)}{2(x_i-x_j)(x_j-x_k)(x_k-x_i)}
\\
  &=& \frac{1}{2}\left(x_i^2+x_j^2+x_k^2+x_ix_j+x_ix_k+x_jx_k\right)-\frac{2a}{3}\left(x_i+x_j+x_k\right)-b.
\end{eqnarray*}
The homomorphism $\eta$ has degree 2. If $\twoedgesop$ and $\htwoedgesop$ belong to 
open KR-webs the homomorphism $\eta$ is defined only up to a sign (see~\cite{KR}). 
In Section~\ref{sec:sl3-mf} we will show that for ordinary closed webs there is 
no sign ambiguity.

\section{A matrix factorization theory for $sl_3$ webs}\label{sec:sl3-mf}

As mentioned in the introduction, the main problem in comparing the foam and 
the matrix factorization $sl_3$ 
link homologies is that one has to deal with two different sorts of webs. In the 
foam approach, where one uses ordinary trivalent webs, all edges are thin and 
oriented, whereas for the matrix factorizations Khovanov and Rozansky used KR-webs, 
which also contain thick unoriented edges. In general there are various 
KR-webs that one can associate to a given web. Therefore it apparently is not clear 
which KR matrix factorization to associate to a web. However, in Proposition 
\ref{prop:ident-vert} we show that this ambiguity is not problematic. Our proof 
of this result is rather roundabout and requires a matrix 
factorization for each vertex. In this way we associate a matrix factorization 
to each web. For each choice of KR-web associated to a given web the KR-matrix 
factorization is a quotient of ours, obtained by identifying the vertex variables 
pairwise according to the thick edges. We then show that for a given web two such 
quotients are always homotopy equivalent. This is the 
main ingredient which allows us to establish the equivalence between the foam and 
the matrix factorization $sl_3$ link homologies.        

Recall that a \emph{web} is an oriented trivalent graph where near each vertex either 
all edges are oriented into it or away from it. We call the former vertices of 
($-$)-\emph{type} and the latter vertices of ($+$)-\emph{type}. To associate 
matrix factorizations to webs we impose that 
each edge have at least one mark.

\begin{figure}[ht!]
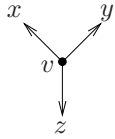

\labellist
\small\hair 2pt
\pinlabel $x$ at -8 115
\pinlabel $y$ at 92 114
\pinlabel $z$ at 42 -10
\pinlabel $v$ at 28 56
\endlabellist
\centering
\figs{0.35}{3vertex-out}
\caption{A vertex of ($+$)-type}
\label{fig:3vert-out}
\end{figure}
%
\subsection{The 3-vertex}\label{ssec:triv}
Consider the 3-vertex of ($+$)-type in Figure~\ref{fig:3vert-out}, with emanating 
edges marked $x,y,z$. The polynomial
$$p(x)+p(y)+p(z)=x^4+y^4+z^4-\frac{4a}{3}\left(x^3+y^3+z^3 \right)-2b\left(x^2+y^2+z^2\right)-4c\left(x+y+z\right)$$
can be written as a polynomial in the elementary symmetric polynomials
$$p_v(x+y+z,xy+xz+yz,xyz)=p_v(e_1,e_2,e_3).$$

Using the methods of Section~\ref{sec:KR-3} we can obtain a matrix factorization of 
$p_v$, but if we tensor together two of these, then we obtain a matrix factorization 
which is not homotopy equivalent to the dumbell matrix factorization. This can be seen 
quite easily, since the new Koszul matrix has 6 rows and 
only one extra variable. This extra variable can be excluded at the expense of 1 
row, but then we get a Koszul matrix with 5 rows, whereas the dumbell Koszul matrix 
has only 2. To solve this problem we introduce a 
set of three new variables for each vertex\footnote{Khovanov had already observed 
this problem for the undeformed case and suggested to us 
the introduction of one vertex variable in that case.}.
Introduce the \emph{vertex variables} $v_1$, $v_2$, $v_3$ with $q(v_i)=2i$ and define 
the \emph{vertex ring}  
$$R_v=\bQ[a,b,c][x,y,z,v_1,v_2,v_3].$$

\noindent We define the potential as $$W_v=p_v(e_1,e_2,e_3)-p_v(v_1,v_2,v_3).$$
We have
\begin{eqnarray*}
W_v &=& \frac{p_v(e_1,e_2,e_3)-p_v(v_1,e_2,e_3)}{e_1-v_1}\left(e_1-v_1\right) \\
    &+& \frac{p_v(v_1,e_2,e_3)-p_v(v_1,v_2,e_3)}{e_2-v_2}\left(e_2-v_2\right) \\
    &+& \frac{p_v(v_1,v_2,e_3)-p_v(v_1,v_2,v_3)}{e_3-v_3}\left(e_3-v_3\right) \\
    &=& g_1\left(e_1-v_1\right) + g_2\left(e_2-v_2\right) + g_3\left(e_3-v_3\right),
\end{eqnarray*}
where the polynomials $g_i$ ($i=1,2,3$) have the explicit form
\begin{eqnarray}\label{eqn:gfactor1}
g_1 &=& \frac{e_1^4-v_1^4}{e_1-v_1}-4e_2(e_1+v_1)+4e_3-
        \frac{4a}{3}\left(\frac{e_1^3-v_1^3}{e_1-v_1}-3e_2\right) -2b(e_1+v_1)-4c \\
g_2 &=& 2(e_2+v_2)-4v_1^2+4av_1 + 4b \\
g_3 &=& 4(v_1-a).\label{eqn:gfactor3}
\end{eqnarray}
We define the 3-vertex factorization $\hatYGraph_{v_+}$ as the tensor product of the 
factorizations
$$R_v\xra{g_i} R_v\{2i-4\}\xra{e_i-v_i} R_v,
\qquad (i=1,2,3)$$
shifted by $-3/2$ in the $q$-grading and by $1/2$ in the $\bZ/2\bZ$-grading, which 
we write in the form of the Koszul matrix
$$\hatYGraph_{v_+}=
\left\{ 
\begin{matrix}
 g_1\ , & e_1-v_1  \\
 g_2\ , & e_2-v_2  \\
 g_3\ , & e_3-v_3
\end{matrix}
\right\}_{R_v}
\left\{-3/2\right\}\brak{1/2}
.$$

If $\YGraph_v$ is a 3-vertex of $-$-type with incoming edges marked $x,y,z$ 
we define
$$\hatYGraph_{v_-}=
\left\{ 
\begin{matrix}
 g_1\ , & v_1-e_1 \\
 g_2\ , & v_2-e_2 \\
 g_3\ , & v_3-e_3
\end{matrix}
\right\}_{R_v}
\left\{-3/2\right\}\brak{1/2},$$
with $g_1$, $g_2$, $g_3$ as above.

\begin{lem}
We have the following homotopy equivalences in $\End_{\mf{}}{(\hatYGraph_{v_\pm})}$:
$$m(x+y+z)\cong m(a),\quad 
m(xy+xz+yz)\cong m(-b),\quad 
m(xyz)\cong m(c).$$
\end{lem}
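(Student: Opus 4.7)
The plan is to exploit a general fact about Koszul factorizations: in an elementary Koszul factorization $R\xra{a}R\{k\}\xra{b}R$, both $m(a)$ and $m(b)$ are null-homotopic endomorphisms. Indeed $(h_0,h_1)=(1,0)$ is a homotopy with $Dh=m(b)$, and $(h_0,h_1)=(0,1)$ is one with $Dh=m(a)$. By tensoring with the identities on the other factors (with the usual sign conventions), these homotopies extend to the tensor product $\hatYGraph_{v_\pm}=\{g_i,\pm(e_i-v_i)\}_{i=1,2,3}$, so that in $\End_{\mf{}}(\hatYGraph_{v_\pm})$ we have
$$m(g_i)\cong 0,\qquad m(e_i)\cong m(v_i),\qquad i=1,2,3.$$

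With these at our disposal, I would cascade through the explicit formulas (\ref{eqn:gfactor1})--(\ref{eqn:gfactor3}) from bottom to top. The relation $g_3=4(v_1-a)$ gives $m(v_1)\cong m(a)$ immediately from $m(g_3)\cong 0$. Substituting this together with $m(e_2)\cong m(v_2)$ into $g_2=2(e_2+v_2)-4v_1^2+4av_1+4b$ collapses $m(g_2)\cong 0$ to $m(4v_2+4b)\cong 0$, i.e.\ $m(v_2)\cong m(-b)$. Finally, inserting the relations $m(e_1)\cong m(v_1)\cong m(a)$, $m(e_2)\cong m(v_2)\cong m(-b)$, and $m(e_3)\cong m(v_3)$ into the formula for $g_1$ and using $\tfrac{e_1^4-v_1^4}{e_1-v_1}\sim 4a^3$ and $\tfrac{e_1^3-v_1^3}{e_1-v_1}\sim 3a^2$ after reduction, the $a^3$ and $ab$ type terms cancel and $m(g_1)\cong 0$ becomes $m(4v_3-4c)\cong 0$, giving $m(v_3)\cong m(c)$.

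Chaining these with $m(e_i)\cong m(v_i)$ produces
$$m(x+y+z)\cong m(a),\qquad m(xy+xz+yz)\cong m(-b),\qquad m(xyz)\cong m(c),$$
which is the claim. The $\hatYGraph_{v_-}$ case is identical, since $m(v_i-e_i)$ admits a null-homotopy for exactly the same reason as $m(e_i-v_i)$. The only genuine piece of labor is the polynomial manipulation at the $g_1$ stage, where one must verify that the $4a^3$ and $8ab$ type contributions coming from $\tfrac{e_1^4-v_1^4}{e_1-v_1}$, $4e_2(e_1+v_1)$, $\tfrac{4a}{3}\bigl(\tfrac{e_1^3-v_1^3}{e_1-v_1}-3e_2\bigr)$ and $2b(e_1+v_1)$ cancel precisely; this is routine expansion, but it is the only step that really uses the specific shape of $p(x)$, and it is engineered precisely so that $g_3$ is linear in $v_1$ and the $g_1$ reduction comes out clean.
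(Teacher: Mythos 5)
Your proof is correct, and it reaches the conclusion by a genuinely different route from the paper. The paper invokes the general principle that for a matrix factorization $\hat M$ of potential $W$ over $R$, the ring map $R\to\End_{\hmf{}}(\hat M)$, $r\mapsto m(r)$, factors through the Jacobi algebra $J_W=R/(\partial W)$, and then simply reads off that $J_{W_v}\cong\bQ[a,b,c,x,y,z]/(e_1-a,\ e_2+b,\ e_3-c)$. You instead use the elementary and self-contained fact that in each Koszul row $\{g_i,\ \pm(e_i-v_i)\}$ both $m(g_i)$ and $m(e_i-v_i)$ are null-homotopic (the homotopies $(1,0)$ and $(0,1)$), tensor those up, and then solve the resulting system $m(g_i)\cong 0$, $m(e_i)\cong m(v_i)$ by hand, cascading from $g_3$ to $g_1$ using the explicit formulas~(\ref{eqn:gfactor1})--(\ref{eqn:gfactor3}). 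The bookkeeping at the $g_1$ stage, which you flag as the only real labor, does check out: substituting $m(e_1)\cong m(v_1)\cong m(a)$ and $m(e_2)\cong m(v_2)\cong m(-b)$ turns the seven terms of $g_1$ into $4a^3+8ab+4v_3-4a^3-4ab-4ab-4c=4v_3-4c$, as you claim. The paper's argument is shorter and more conceptual but defers the actual computation (presenting the Jacobi algebra without showing the derivation); yours is longer on the page but fully explicit and uses only the homotopies intrinsic to a Koszul factorization, which arguably makes the $\bZ/2\bZ$-degree issues and the $\hatYGraph_{v_-}$ case more transparent.
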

\proof
For a matrix factorization $\hat{M}$ over $R$ with potential $W$ the homomorphism 
$$R\ra\End_{\mf{}}(\hat{M}),\quad r\mapsto m(r)$$ 
factors through the Jacobi algebra of $W$ and up to shifts, the Jacobi algebra of 
$W_v$ is
$$J_{W_v}\cong\bQ[a,b,c,x,y,z]_{/ 
\{
x+y+z=a,\ 
xy+xz+yz=-b,\ 
xyz=c
\}}.\rlap{\hspace{0.572in}\qedsymbol}$$ 

\subsection{Vertex composition}
The elementary webs considered so far can be combined to produce bigger webs. Consider a general web $\Gamma_v$ composed by $E$ arcs between marks and $V$ vertices. Denote by $\partial E$ the set of free ends of $\Gamma_v$ and by $(v_{i_1},v_{i_2},v_{i_3})$ the vertex variables corresponding to the vertex $v_i$. We have
\begin{equation*}\label{eq:bigmf}
\hat\Gamma_v =\bigotimes\limits_{e\in E}\hatarc_e\otimes\bigotimes_{v\in V}\hatYGraph_v.
\end{equation*}
Factorization $\hatarc_e$ is the arc factorization introduced in 
Subsection~\ref{ssec:KR-abc}. This is a matrix factorization with potential 
$$W=\sum\limits_{i\in\partial E(\Gamma)}\epsilon_i p(x_{i})+\sum\limits_{v_j\in V(\Gamma)}\epsilon_j^vp_v(v_{j_1},v_{j_2},v_{j_3})=W_\mathbf{x}+W_\mathbf{v}$$ where $\epsilon_i=1$ if the corresponding arc is oriented to it or $\epsilon_i=-1$ in the opposite case and $\epsilon_j^v=1$ if $v_j$ is of positive type and $\epsilon_j^v=-1$ in the opposite case.

From now on we only consider open webs in which the number of free ends oriented inwards equals the number of free ends oriented outwards. This implies that the number of vertices of ($+$)-type equals the number of vertices of ($-$)-type.

Let $R$ and $R_\mathbf{v}$ denote the rings $\bQ[a,b,c][\mathbf{x}]$ and 
$R[\mathbf{v}]$ 
respectively. Given two vertices, $v_i$ and $v_j$, of opposite type, we 
can take the quotient by the ideal generated by $v_i-v_j$. 
The potential becomes independent of $v_i$ and $v_j$, because 
they appeared with opposite signs, and we can 
exclude the common vertex variables corresponding to $v_i$ and $v_j$ as in 
Lemma~\ref{lem:exvar}. This is possible because in all our examples the 
Koszul matrices have linear terms which involve vertex and edge variables.  
The matrix factorization which we obtain in this way we represent graphically 
by a virtual edge, as in 
Figure~\ref{fig:virtualedge}. 
\begin{figure}[h!]
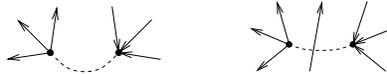

$$
\figins{0}{0.35}{vrt_edge}
\qquad\quad
\figins{0}{0.38}{vrt_edge2}
$$
\caption{Virtual edges}
\label{fig:virtualedge}
\end{figure}
A virtual edge can cross other virtual edges and ordinary edges and does not have 
any mark.

If we pair every positive vertex in $\Gamma_v$ to a negative one, the above 
procedure leads to a {\em complete identification} of the vertices of $\Gamma_v$ 
and a corresponding matrix factorization $\zeta(\hat\Gamma_v)$. 
A different complete identification yields a different matrix factorization 
$\zeta'(\hat\Gamma_v)$.

\begin{prop}\label{prop:ident-vert}
Let $\Gamma_v$ be a closed web. Then $\zeta(\hat\Gamma_v)$ and 
$\zeta'(\hat\Gamma_v)$ are isomorphic, up to a shift in the $\bZ/2\bZ$-grading.
\end{prop}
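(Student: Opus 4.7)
The plan is to reduce to the case of a single transposition and then construct an explicit isomorphism. A complete identification of the vertices of $\Gamma_v$ amounts to a perfect matching between the set of positive vertices and the set of negative vertices (which have equal cardinality, by the convention adopted after Figure~\ref{fig:virtualedge}). Two such matchings $M$ and $M'$ differ by a permutation of the negative vertices, and since transpositions generate the symmetric group it suffices to assume that $M$ and $M'$ coincide except on four vertices $v_1^+,v_2^+,v_1^-,v_2^-$, with $M$ pairing $(v_1^+,v_1^-),(v_2^+,v_2^-)$ and $M'$ pairing $(v_1^+,v_2^-),(v_2^+,v_1^-)$. The rest of the matching contributes an identical Koszul factorization in both cases, so the problem is local at these four vertices.

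Next I would carry out the vertex-variable exclusions at these four vertices using Lemma~\ref{lem:exvar}. Let $A_k,B_k,C_k,D_k$ denote the $k$-th elementary symmetric polynomial in the edge variables incident to $v_1^+,v_2^+,v_1^-,v_2^-$ respectively. In each case the twelve-row Koszul matrix contributed by the four vertex factorizations collapses, after excluding the six common vertex variables, to a six-row Koszul matrix. The $b$-columns are $\{A_k-C_k,\,B_k-D_k\}_{k=1,2,3}$ in the case of $M$ and $\{A_k-D_k,\,B_k-C_k\}_{k=1,2,3}$ in the case of $M'$, with $a$-columns obtained from~\eqref{eqn:gfactor1}--\eqref{eqn:gfactor3} by the appropriate substitutions. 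Both factorize the same potential $p_v(A)+p_v(B)-p_v(C)-p_v(D)$, so that the target is to exhibit an isomorphism $V_M\cong V_{M'}$ up to a $\bZ/2\bZ$-shift.

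Finally I would construct an isomorphism between the two resulting Koszul factorizations. The identity $(A_k-C_k)+(B_k-D_k)=(A_k-D_k)+(B_k-C_k)$ suggests that the two $b$-sequences can be related by a combination of the Row Operations lemma together with Lemma~\ref{lem:regseq-iso}, while a reshuffling of tensor factors needed to bring the rows into correspondence should account for the $\bZ/2\bZ$-grading shift. The main obstacle is that the $b$-sequences of $V_M$ and $V_{M'}$ generate genuinely different ideals of $\bQ[a,b,c][\mathbf{x}]$, so the row operations alone cannot transform one $b$-sequence into the other. The strategy I would adopt is to pass through an intermediate Koszul factorization in which auxiliary variables---mirroring the vertex variables eliminated via Lemma~\ref{lem:exvar}---are temporarily reintroduced, exhibiting both $V_M$ and $V_{M'}$ as variable-exclusions from a common parent factorization. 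The explicit form of $g_1,g_2,g_3$ in~\eqref{eqn:gfactor1}--\eqref{eqn:gfactor3}, and in particular the fact that $g_2$ and $g_3$ are linear in the top-degree edge symmetric functions, should make the bookkeeping of the $a$-columns under this procedure tractable and yield the desired isomorphism.
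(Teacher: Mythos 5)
Your reduction to a single transposition and the observation that the $b$-columns of $V_M$ and $V_{M'}$ generate different ideals are both correct, and the latter is a real obstacle to a naive row-operation attack. However, the resolution you sketch---reintroducing vertex variables so that $V_M$ and $V_{M'}$ become ``variable-exclusions from a common parent''---does not close the gap. The parent $\hat\Gamma_v$ is common to both, but $V_M$ and $V_{M'}$ arise from it by quotienting by \emph{different} ideals (identifying $v_1^+\sim v_1^-, v_2^+\sim v_2^-$ versus $v_1^+\sim v_2^-, v_2^+\sim v_1^-$), and two quotients of a common object by different ideals need not be isomorphic; nothing in your plan produces the required map. You are thus left at exactly the point where the real work begins, with no concrete isomorphism.

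The paper's proof (Lemma~\ref{lem:swapedges}) resolves the difficulty by a choice of exclusion you do not consider: it uses the third row of each vertex factorization, whose $a$-entry $g_3=4(v_1-a)$ is a \emph{linear form in an edge variable}, to exclude one edge variable from each of the two negative vertices. After this exclusion the first elementary symmetric function of the edges at each negative vertex becomes the constant $a$, so the $k=1$ rows of the two Koszul matrices become literally identical and can be absorbed into the common tensor factor. What is left is a $2\times 2$ Koszul discrepancy $\hat K$ versus $\hat K'$ in which the two $b$-columns \emph{do} span the same rank-two lattice (they differ by the invertible change of basis sending $(u,v)\mapsto(u+v,\,u-v)$ suitably scaled), and an explicit $2\times 2$ isomorphism $\psi$ can then be written down, producing the claimed $\brak{1}$ shift. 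Your proposal misses both the role of the linear $g_3$ row and the fact that after that exclusion the surviving $b$-ideals coincide, which is what makes the explicit isomorphism possible. Finally, to make the shift $\brak{k}$ unambiguous across different decompositions of the permutation into transpositions, the paper also needs a parity argument; as written, your sketch does not address this either, though for the bare statement of the proposition this is a secondary concern.
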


\noindent To prove Proposition~\ref{prop:ident-vert} we need some technical results.
\begin{lem}\label{lem:KR-3edge}
Consider the web $\Gamma_v$ and the KR-web $\Upsilon$ below.
$$\xymatrix@R=1mm{
\figins{0}{0.7}{virt3edge} &
\figins{0}{0.7}{trpl_edge} \\
\Gamma_v & \Upsilon
}.$$
Then $\zeta(\hat\Gamma_v\brak{1})$ is the factorization of the triple edge 
$\hat\Upsilon$ of Khovanov-Rozansky for $n=3$.
\end{lem}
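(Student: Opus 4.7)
The plan is to compute $\zeta(\hat\Gamma_v)$ explicitly as a Koszul matrix and reduce it to $\hat\Upsilon$ using the row-operation and variable-exclusion lemmas, together with the regular-sequence isomorphism of Lemma~\ref{lem:regseq-iso}.

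Parametrize $\Gamma_v$ so that the positive vertex has outgoing edges marked $x_1,x_2,x_3$ with vertex variables $v_1,v_2,v_3$, and the negative vertex has incoming edges marked $x_4,x_5,x_6$ with vertex variables $v'_1,v'_2,v'_3$. Let $e_i$ and $s_i$ denote the elementary symmetric polynomials in $x_1,x_2,x_3$ and $x_4,x_5,x_6$, respectively. Then $\hatYGraph_{v_+}\otimes\hatYGraph_{v_-}$ is the Koszul matrix with rows $(g_i,\,e_i-v_i)$ and $(g'_i,\,v'_i-s_i)$ for $i=1,2,3$, where $g_i$ is as in equations~(\ref{eqn:gfactor1})--(\ref{eqn:gfactor3}) and $g'_i$ is the analogous telescoping coefficient for $p_v(v'_1,v'_2,v'_3)-p_v(s_1,s_2,s_3)$. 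Applying $\zeta$ identifies $v'_i\equiv v_i$.

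First I would apply the row operation $[i,i+3]_1$ from the row-operation lemma, which turns the row $(g'_i,\,v_i-s_i)$ into $(g'_i,\,e_i-s_i)$ and the row $(g_i,\,e_i-v_i)$ into $(g_i-g'_i,\,e_i-v_i)$, for each $i=1,2,3$. After a sign normalization (multiplying the first three rows by $-1$ so that the $b$-entries become $v_i-e_i$), Lemma~\ref{lem:exvar} applies and lets me exclude the three vertex variables by the successive substitutions $v_i\mapsto e_i$, deleting the corresponding rows. What remains is a Koszul factorization over $R'=\bQ[a,b,c,x_1,\ldots,x_6]$ with three rows whose $b$-column is $(e_1-s_1,\,e_2-s_2,\,e_3-s_3)$ and whose total potential is $p_v(e_1,e_2,e_3)-p_v(s_1,s_2,s_3)$.

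The sequence $(e_1-s_1,e_2-s_2,e_3-s_3)$ is regular in $R'$ (it is part of a system of parameters), and $\sum_i h_i(e_i-s_i)=p_v(e_1,e_2,e_3)-p_v(s_1,s_2,s_3)$ by the very definition of the $h_i$. Lemma~\ref{lem:regseq-iso} then furnishes an isomorphism of Koszul matrices identifying the surviving factorization with $\{h_i,\,e_i-s_i\}_{i=1,2,3}$, which is $\hat\Upsilon$ up to the overall shift. The shifts $\{-3/2\}\brak{1/2}$ contributed by each of the two vertex factorizations combine to $\{-3\}\brak{1}$, which matches the $\{-3\}$ shift of $\hat\Upsilon$ once the external $\brak{1}$ in the statement is absorbed. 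The main obstacle is the careful bookkeeping around Lemma~\ref{lem:exvar}: verifying that after the sign normalization the hypothesis $x_1-b_i\in R'$ is satisfied in each of the three successive exclusions, and that the intermediate Koszul matrices stay well-defined as we iteratively substitute $v_1,v_2,v_3$. The appeal to the regular-sequence lemma at the end is precisely what avoids having to match the $a$-column entries $g'_i|_{v=e}$ with the $h_i$ term by term.
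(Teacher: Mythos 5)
Your argument is correct, but it is more roundabout than necessary, and the paper's own proof consists of the single word ``Immediate.'' The reason the paper can say that is the following: to exclude the vertex variable $v_i$ after identification, one should use the row coming from the \emph{negative} vertex, whose $b$-entry is $v_i-s_i$, and apply Lemma~\ref{lem:exvar} directly (no row operations and no sign normalization), which substitutes $v_i\mapsto s_i$ and deletes that row. What survives is the Koszul matrix with rows $(g_i|_{v_j=s_j},\,e_i-s_i)$, $i=1,2,3$, and if you compare the telescoping formulas defining $g_i$ (Equations~(\ref{eqn:gfactor1})--(\ref{eqn:gfactor3})) with the definition of $h_i$ preceding the triple-edge factorization, you see that $g_i|_{v=s}=h_i$ \emph{identically}, not just up to the isomorphism of Lemma~\ref{lem:regseq-iso}. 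With the shift bookkeeping you already did ($\{-3/2\}\brak{1/2}$ twice gives $\{-3\}\brak{1}$, and the external $\brak{1}$ cancels the $\brak{1}$), this yields $\hat\Upsilon$ on the nose. Your detour -- performing $[i,i+3]_1$, normalizing signs, excluding via $v_i\mapsto e_i$, and then invoking the regular-sequence lemma to absorb the mismatch between $g'_i|_{v=e}$ and $h_i$ -- is logically sound (the sequence $(e_i-s_i)$ is indeed regular), but it manufactures a discrepancy in the $a$-column that the straightforward exclusion never creates. What the paper's route buys is that the lemma is genuinely a tautology of the definitions; what your route buys is robustness (it would still work if the $a$-entries happened not to match term-by-term), at the cost of an extra appeal to Lemma~\ref{lem:regseq-iso}.
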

\begin{proof} Immediate. 
\end{proof}

\begin{figure}[ht!]
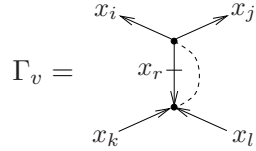

\raisebox{20pt}{$\Gamma_v =$\quad\ }
\labellist
\small\hair 2pt
\pinlabel $x_i$ at -12 130
\pinlabel $x_j$ at 136 129
\pinlabel $x_k$ at -12 -6
\pinlabel $x_l$ at 136 -6
\pinlabel $x_r$ at 36 66 
\endlabellist
\centering
\figs{0.35}{dbvertex}
\caption{A double edge}
\label{fig:thick-KR}
\end{figure}
\begin{lem}\label{lem:fatedge}
Let $\Gamma_v$ be the web in figure~\ref{fig:thick-KR}.
Then $\zeta(\hat\Gamma_v)$ is isomorphic to the factorization assigned to the 
thick edge of Khovanov-Rozansky for $n=3$.
\end{lem}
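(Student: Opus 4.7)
The plan is to compute $\zeta(\hat\Gamma_v)$ explicitly as a Koszul factorization and to prune it, row by row, using Lemma~\ref{lem:exvar}, the Row operations lemma, and the $\brak{1}$-shift, until only a two-row Koszul factorization remains whose $b$-column matches that of $\hatFatEdge$. A final application of Lemma~\ref{lem:regseq-iso} then identifies the two factorizations (up to the $q$- and $\bZ/2\bZ$-shifts that accumulate along the way). Concretely, I first write $\hat\Gamma_v = \hatYGraph_{v_-}\otimes\hatYGraph_{v_+}$, with $\hatYGraph_{v_-}$ the $(-)$-vertex factorization at the top (edges $x_i,x_j,x_r$) and $\hatYGraph_{v_+}$ the $(+)$-vertex factorization at the bottom (edges $x_r,x_k,x_l$), and apply $\zeta$ by setting $v_m^t = v_m^b = v_m$ for $m=1,2,3$. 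Three applications of Lemma~\ref{lem:exvar} then eliminate $v_1,v_2,v_3$ via the three top-vertex rows, substituting $v_m \to e_m(x_i,x_j,x_r)$ in the remaining rows. Setting $\alpha = x_i+x_j$, $\beta = x_ix_j$, $\gamma = x_k+x_l$, $\delta = x_kx_l$, the three remaining (bottom-vertex) rows have $b$-column $\bigl(\gamma-\alpha,\ x_r(\gamma-\alpha)+(\delta-\beta),\ x_r(\delta-\beta)\bigr)$, and because $g_3 = 4(v_1-a)$ by (\ref{eqn:gfactor3}), the third $a$-entry is $A_3 = 4(x_r+\alpha-a)$.

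Next, the row operations $[1,2]_{-x_r}$ followed by $[2,3]_{-x_r}$ reduce the $b$-column to $(\gamma-\alpha,\ \delta-\beta,\ 0)$ without changing $A_3$. The first two rows now match the dumbbell, but the third row $\{A_3,0\}$ cannot be discarded as a contractible summand, since $A_3$ is not a unit. The key observation is that $A_3$ is linear in $x_r$ with unit leading coefficient. Applying the $\brak{1}$-shift to the whole factorization, which by the formula in Section~\ref{sec:KR-3} amounts to swapping the two entries of the third row (with signs and a $q$-shift), and then rescaling via $[3]_{-4}$, transforms the third row into $\{0,\ x_r+\alpha-a\}$; its $b$-entry is now linear in $x_r$ with coefficient $1$. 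Lemma~\ref{lem:exvar} applies a fourth time to exclude $x_r$, substituting $x_r\to a-\alpha$ in the other two rows.

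What remains is a two-row Koszul factorization over $\bQ[a,b,c,x_i,x_j,x_k,x_l]$ with $b$-column $(\gamma-\alpha,\ \delta-\beta)$ and potential $W=p(x_i)+p(x_j)-p(x_k)-p(x_l)$. Flipping signs in both columns via $[i]_{-1}$ gives the $b$-column $(\alpha-\gamma,\ \beta-\delta)$, which is a regular sequence in the ambient ring and matches that of $\hatFatEdge$. Lemma~\ref{lem:regseq-iso} then shows that any two Koszul factorizations with this regular $b$-column and potential $W$ are isomorphic, so $\zeta(\hat\Gamma_v)\cong\hatFatEdge$ up to the accumulated shifts. The hard part is precisely the $\brak{1}$-shift step: the Koszul redundancy $b_3 = x_r b_2 - x_r^2 b_1$ among the three middle $b$-entries forces, after row reduction, a non-contractible row $\{A_3,0\}$, and it is the very explicit shape $g_3=4(v_1-a)$ that allows the interchange of $a$ and $b$ in this row and hence the subsequent elimination of the auxiliary variable $x_r$.
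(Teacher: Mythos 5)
Your proof is correct and follows essentially the same route as the paper's: tensor the two vertex factorizations, identify vertices and exclude the vertex variables via Lemma~\ref{lem:exvar} to obtain a three-row Koszul matrix, use the explicit form $g_3=4(v_1-a)$ together with the $\brak{1}$-shift on the third row and a further exclusion of the middle variable $x_r$ to reduce to a two-row Koszul matrix, and finish with Lemma~\ref{lem:regseq-iso}. The only differences from the paper are bookkeeping choices: you eliminate the vertex variables through the upper-vertex rows (so $v_m\to e_m(x_i,x_j,x_r)$ and $x_r\to a-x_i-x_j$) rather than the lower-vertex rows; you clean the $b$-column to $(\gamma-\alpha,\delta-\beta,0)$ \emph{before} shifting row 3, whereas the paper shifts first, does only one row operation, and excludes $x_r$ directly from the nonzero $b_3=-4(x_r+x_k+x_l-a)$; and you explicitly rescale by $[3]_{-4}$ before invoking Lemma~\ref{lem:exvar}, a small point the paper glosses over. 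None of these affect the substance, and your identification of the "redundant" row $\{A_3,0\}$ and the role of the very explicit $g_3$ is a clean articulation of why the $\brak{1}$-shift step is the crux.
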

\begin{proof}
Let $\hat\Gamma_+$ and $\hat\Gamma_-$ be the Koszul factorizations for the upper and
lower vertex in $\Gamma_v$ respectively and $v^\pm_i$ denote the corresponding 
sets of vertex variables.  
We have
$$
\hat\Gamma_+ =
\left\{\begin{matrix}
g^+_1\ ,& x_i+x_j+x_r-v^+_1 \\
g^+_2\ ,& x_ix_j+x_r(x_i+x_j)-v^+_2 \\
g^+_3\ ,& x_ix_jx_r-v^+_3
\end{matrix}\right\}_{R_{v_+}}\{-3/2\}\brak{1/2}$$
and
$$
\hat\Gamma_- =
\left\{\begin{matrix}
g^-_1\ ,& v^-_1 - x_k-x_l-x_r \\
g^-_2\ ,& v^-_2 - x_kx_l-x_r(x_k+x_l) \\
g^-_3\ ,& v^-_3 - x_kx_lx_r
\end{matrix}\right\}_{R_{v_-}}\{-3/2\}\brak{1/2}.$$
The explicit form of the polynomials $g^\pm_i$ is given in 
Equations~(\ref{eqn:gfactor1}-\ref{eqn:gfactor3}).
Taking the tensor product of $\hat\Gamma_+$ and $\hat\Gamma_-$, identifying vertices $v_+$ and $v_-$ and excluding the vertex variables yields
$$
\zeta(\hat\Gamma_v) = 
\left(\hat\Gamma_+\otimes\hat\Gamma_-\right)_{/ \mathbf{v}_+ - \mathbf{v}_-}\cong
\left\{\begin{matrix}
g_1\ ,& x_i+x_j-x_k-x_l \\
g_2\ ,& x_ix_j - x_kx_l + x_r(x_i+x_j-x_k-x_l) \\
g_3\ ,& x_r(x_ix_j-x_kx_l) 
\end{matrix}\right\}_R\{-3\}\brak{1},
$$
where
$$
g_i=
\left. g_i^+ 
\right|_{\{v_1^+=x_k+x_l+x_r,\ v_2^+=x_kx_l+x_r(x_k+x_l),\ v_3^+=x_rx_kx_l\} }.$$

This is a factorization over the ring 
$$R=\bQ[a,b,c][x_i,x_j,x_k,x_l,x_r,\mathbf{v}]_{/I}\cong \bQ[a,b,c][x_i,x_j,x_k,x_l,x_r]$$
where $I$ is the ideal generated by
$$\{
v_1 - x_r - x_k - x_l,\
v_2 - x_kx_l - x_r(x_k+x_l),\
v_3 - x_kx_lx_r
\}.$$
Using $g_3=4(x_r+x_k+x_l-a)$ and acting with the shift functor $\brak{1}$ on the third row one can write
$$
\zeta(\hat\Gamma_v) \cong
\left\{\begin{matrix}
g_1\ ,& x_i+x_j-x_k-x_l \\
g_2\ ,& x_ix_j - x_kx_l + x_r(x_i+x_j-x_k-x_l) \\
-x_r(x_ix_j-x_kx_l)\ ,&  -4(x_r+x_k+x_l-a)
\end{matrix}\right\}_R\{-1\}
$$
which is isomorphic, by a row operation, to the factorization
$$
\left\{\begin{matrix}
g_1 + x_rg_2 \ ,& x_i+x_j-x_k-x_l \\
g_2\ ,& x_ix_j - x_kx_l \\
-x_r(x_ix_j-x_kx_l)\ ,&  -4(x_r+x_k+x_l-a)
\end{matrix}\right\}_R\{-1\}.
$$
Excluding the variable $x_r$ from the third row gives
$$
\zeta(\hat\Gamma_v) \cong
\left\{\begin{matrix}
g_1 + (a-x_k-x_l) g_2\ ,& x_i+x_j-x_k-x_l \\
g_2 \ ,& x_ix_j - x_kx_l
\end{matrix}\right\}_{R'}\{-1\}
$$
where 
$$
R'=\bQ[a,b,c][x_i,x_j,x_l,x_r]_{/x_r-a+x_k+x_l} 
\cong
\bQ[a,b,c][x_i,x_j,x_k,x_l].$$ 
The claim follows from Lemma~\ref{lem:regseq-iso}, since both are factorizations 
over $R'$ with the same potential and the same second column, the terms in which 
form a regular sequence in $R'$. As a matter of fact, using a row operation 
one can write
 $$
\zeta(\hat\Gamma_v) \cong
\left\{\begin{matrix}
g_1 + (a-x_k-x_l) g_2+2(a-x_k-x_l)(x_ix_j-x_kx_l) ,& x_i+x_j-x_k-x_l \\
g_2 + 2(a-x_k-x_l)(x_i+x_j-x_k-x_l) ,& x_ix_j - x_kx_l
\end{matrix}\right\}\{-1\}
$$
and check that the polynomials in the first column are exactly the polynomials $u_{ijkl}$ and $v_{ijkl}$ corresponding to the factorization assigned to the thick edge in Khovanov-Rozansky theory. 
\end{proof}

\begin{lem}\label{lem:swapedges}
Let $\Gamma_v$ be a closed web and $\zeta$ and $\zeta'$ two complete identifications 
that only differ in the region depicted 
in Figure~\ref{fig:swap}, where $T$ is a part of the diagram whose orientation is not 
important. Then there is an isomorphism $\zeta(\hat\Gamma_v)\cong
\zeta'(\hat\Gamma_v)\brak{1}$.
\begin{figure}[ht!]
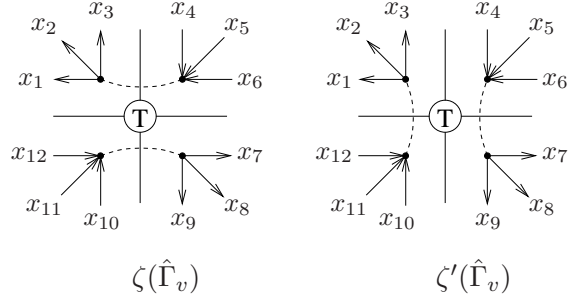

\bigskip
\labellist
\small\hair 2pt
\pinlabel $x_1$ at -22 136
\pinlabel $x_2$ at -8 194
\pinlabel $x_3$ at 54 212
\pinlabel $x_4$ at 142 212
\pinlabel $x_5$ at 200 194
\pinlabel $x_6$ at 214 136
\pinlabel $x_7$ at 214 56
\pinlabel $x_8$ at 200 0
\pinlabel $x_9$ at 142 -16
\pinlabel $x_{10}$ at 54 -16
\pinlabel $x_{11}$ at -8 0
\pinlabel $x_{12}$ at -24 56
\pinlabel $x_1$ at 310 136
\pinlabel $x_2$ at 320 194
\pinlabel $x_3$ at 372 212
\pinlabel $x_4$ at 470 212
\pinlabel $x_5$ at 528 194
\pinlabel $x_6$ at 542 136
\pinlabel $x_7$ at 542 56
\pinlabel $x_8$ at 528 0
\pinlabel $x_9$ at 470 -16
\pinlabel $x_{10}$ at 372 -16
\pinlabel $x_{11}$ at 320 0
\pinlabel $x_{12}$ at 304 56
\endlabellist
\centering
\figs{0.35}{four3vertT1}
\qquad\qquad
\figs{0.35}{four3vertT2}
\put(-154,-30){$\zeta(\hat\Gamma_v)$}\put(-39,-30){$\zeta'(\hat\Gamma_v)$}
\caption{Swapping virtual edges}
\label{fig:swap}
\end{figure}
\end{lem}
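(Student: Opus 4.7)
The plan is to reduce the lemma to a local computation on the tensor product of the four 3-vertex factorizations appearing in Figure~\ref{fig:swap} and to show by explicit row operations that the two identifications differ by exactly $\langle 1\rangle$ in the $\mathbb{Z}/2\mathbb{Z}$-grading. Since $\zeta$ and $\zeta'$ agree outside the swap region, the underlying arc and vertex factorizations in the rest of $\Gamma_v$ contribute identically, so one may restrict attention to the tensor product of the two $(+)$-vertex factorizations and the two $(-)$-vertex factorizations displayed.

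First I would write out the relevant tensor product as a single Koszul matrix with $12$ rows, three of the form $\{g_i^{+},\ e_i^{P_a}-v_i^{P_a}\}$ for each $(+)$-vertex $P_a$ and three of the form $\{g_i^{-},\ v_i^{N_b}-e_i^{N_b}\}$ for each $(-)$-vertex $N_b$, where $e_1,e_2,e_3$ are the elementary symmetric polynomials in the three edge variables adjacent to the vertex in question. A given complete identification $\zeta$ pairs each $P_a$ with some $N_b$, sets $v^{P_a}=v^{N_b}$, and then lets us exclude these vertex variables. Concretely, for a pair $(P,N)$, the row operation $[N,P]_1$ from the Row-operations lemma turns the $P$-row into $\{g^{+},\ e^{P}-e^{N}\}$, which no longer involves~$v$, and puts the $N$-row into the form $\{g^{-}-g^{+},\ v-e^{N}\}$, to which Lemma~\ref{lem:exvar} applies and eliminates the vertex variable by the substitution $v=e^{N}$. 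Performing this for all three indices and for both paired vertices yields a Koszul matrix in only the edge variables, with six rows of the shape $\{g_i|_{v=e^{N}}\,,\ e_i^{P}-e_i^{N}\}$.

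Second, I would carry out the same reduction for the other identification $\zeta'$, producing a second six-row Koszul matrix over the same polynomial ring and with the same potential. To compare the two, I would apply further row operations to align the second columns and then appeal to Lemma~\ref{lem:regseq-iso}: the linear polynomials appearing in the second columns form a regular sequence in the edge-variable ring (this is a standard check using their independent leading terms), so any two factorizations with that common second column and a common potential are isomorphic after adjusting the first column by trivial row moves. The key bookkeeping is the $\mathbb{Z}/2\mathbb{Z}$-shift: each vertex factorization carries a $\langle 1/2\rangle$ shift, and the change of pairing in the swap effectively reorders the Koszul factors in a way that, after the reductions above, realizes an odd permutation of the six surviving rows in the $\mathbb{Z}/2\mathbb{Z}$-sense, giving the advertised $\langle 1\rangle$.

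The main obstacle, I expect, is the verification that the sequences appearing as second columns are regular so that Lemma~\ref{lem:regseq-iso} applies cleanly, and the careful tracking of the $\mathbb{Z}/2\mathbb{Z}$-parity through the combined use of the Row-operations lemma, the variable-exclusion lemma, and the repeated reshuffling of Koszul factors; everything else reduces to routine linear algebra over the polynomial ring $\mathbb{Q}[a,b,c][x_1,\dots,x_{12}]$ localized at the swap region.
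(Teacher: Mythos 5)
Your overall plan — localize to the four vertices in the swap region, write the two identifications as Koszul matrices, and eliminate vertex variables by row operations and Lemma~\ref{lem:exvar} — matches the paper's reduction. The gap is in the final comparison. After the exclusions you describe, the two residual Koszul matrices do \emph{not} share a second column: schematically, $\zeta$ produces second-column entries of the form $\alpha_{1,2,3}-\beta_{5,6}$ and $\alpha_{7,8,9}-\beta_{10,11}$, whereas $\zeta'$ produces $\alpha_{7,8,9}-\beta_{5,6}$ and $\alpha_{1,2,3}-\beta_{10,11}$ (in the notation the paper sets up, with $\alpha_{i,j,k}=x_ix_j+(x_i+x_j)x_k$ and $\beta_{i,j}=x_ix_j+(x_i+x_j)(a-x_i-x_j)$). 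These are genuinely different vectors, not reorderings of one another, so Lemma~\ref{lem:regseq-iso} simply does not apply, and no amount of row operations of type $[i,j]_\lambda$ or $[i,j]'_\lambda$ (which only add multiples of one row to another) will equate them. Even after the further reduction by $[1,2]_1\circ[1,2]'_{-2}$ that the paper performs in its remark following the lemma, only one of the two rows agrees; the remaining discrepancy is precisely an interchange of the two entries within a single Koszul row, and it is \emph{that} operation — not any permutation of rows — that realizes the $\brak{1}$ shift, as explained right after the definition of Koszul factorizations in Section~\ref{sec:KR-3}. Permuting the rows of a Koszul matrix merely reorders tensor factors and does not change the $\mathbb{Z}/2\mathbb{Z}$-grading, so your claim that the shift arises from ``an odd permutation of the six surviving rows'' is incorrect and is the step at which the argument breaks.

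A second, smaller issue: even if one could align the second columns and invoke Lemma~\ref{lem:regseq-iso}, that lemma is nonconstructive, whereas what the paper actually needs (for Corollary~\ref{cor:swap} and for the $S_k$-representation Lemma~\ref{lem:repsym}) is a specific, explicitly computable isomorphism $\psi$ whose square is the identity and which satisfies the Yang–Baxter relation. The paper therefore writes down $\psi$ as a concrete pair of $2\times 2$ matrices and checks it is an involution; your proposal would not produce such an explicit map. To repair your argument you should stop at the two $2\times 2$ Koszul matrices $\hat K$ and $\hat K'$, observe that one can be obtained from the other by interchanging the two entries of a single row (after some row operations), recall that this interchange is exactly the effect of $\brak{1}$, and then exhibit the isomorphism explicitly rather than appealing to the regular-sequence lemma.
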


\begin{proof}
Denoting by $\hat{M}$ the tensor product of $\hat T$ with the factorization 
corresponding to the part of the diagram not depicted in Figure~\ref{fig:swap} 
we have
$$\zeta(\hat\Gamma_v) \cong \hat M\otimes
\left\{\begin{matrix}
g_1\ , & x_1+x_2+x_3-x_4-x_5-x_6 \\
g_2\ , & x_1x_2+(x_1+x_2)x_3-x_5x_6-x_4(x_5+x_6)\\
g_3\ , & x_1x_2x_3-x_4x_5x_6 \\
g_1'\ , & x_7+x_8+x_9-x_{10}-x_{11}-x_{12} \\
g_2'\ , & x_8x_9+x_7(x_8+x_9)-x_{10}x_{11}-(x_{10}+x_{11})x_{12} \\
g_3'\ , & x_7x_8x_9-x_{10}x_{11}x_{12}
\end{matrix}\right\}\{-6\}$$
with polynomials $g_i$ and $g_i'$ ($i=1,2,3$) given by 
Equations~(\ref{eqn:gfactor1}-\ref{eqn:gfactor3}). Similarly
$$\zeta'(\hat\Gamma_v) \cong \hat M\otimes
\left\{\begin{matrix}
h_1\ , & x_7+x_8+x_9-x_4-x_5-x_6 \\
h_2\ , & x_8x_9+x_7(x_8+x_9)-x_5x_6-x_4(x_5+x_6) \\
h_3\ , & x_7x_8x_9-x_4x_5x_6  \\
h_1'\ , & x_1+x_2+x_3-x_{10}-x_{11}-x_{12} \\
h_2'\ , & x_1x_2+(x_1+x_2)x_3-x_{10}x_{11}-(x_{10}+x_{11})x_{12}\\
h_3'\ , & x_1x_2x_3-x_{10}x_{11}x_{12}
\end{matrix}\right\}\{-6\}$$
where the polynomials $h_i$ and $h_i'$ ($i=1,2,3$) are as above. The factorizations 
$\zeta(\hat\Gamma_v)$ and $\zeta'(\hat\Gamma_v)$ have potential zero.
Using the explicit form   
$$g_3=h_3=4(x_4+x_5+x_6-a),\qquad g_3'=h_3'=4(x_{10}+x_{11}+x_{12}-a)$$
we exclude the variables $x_4$ and $x_{12}$ from the third and sixth rows in 
$\zeta(\hat\Gamma_v)$ and $\zeta'(\hat\Gamma_v)$. This operation transforms 
the factorization 
$\hat M$ into the factorization $\hat M'$, which again is a tensor factor 
which $\zeta(\hat\Gamma_v)$ and 
$\zeta'(\hat\Gamma_v)$ have in common. Ignoring common overall shifts we obtain
$$\zeta(\hat\Gamma_v) \cong \hat M'\otimes
\left\{\begin{matrix}
g_1\ , & x_1+x_2+x_3-a \\
g_2\ , & x_1x_2+(x_1+x_2)x_3-x_5x_6-(x_5+x_6)(a-x_5-x_6)\\
g_1'\ , & x_7+x_8+x_9-a \\
g_2'\ , & x_8x_9+x_7(x_8+x_9)-x_{10}x_{11}-(x_{10}+x_{11})(a-x_{10}-x_{11})
\end{matrix}\right\}$$
and
$$\zeta'(\hat\Gamma_v) \cong \hat M'\otimes
\left\{\begin{matrix}
h_1\ , & x_7+x_8+x_9-a \\
h_2\ , & x_8x_9+x_7(x_8+x_9)-x_5x_6-(x_5+x_6)(a-x_5-x_6) \\
h_1'\ , & x_1+x_2+x_3-a \\
h_2'\ , & x_1x_2+(x_1+x_2)x_3-x_{10}x_{11}-(x_{10}+x_{11})(a-x_{10}-x_{11})
\end{matrix}\right\}.$$

Using Equation~\ref{eqn:gfactor1} we see that $g_1=h_1'$ and $g_1'=h_1$ and therefore, 
absorbing in $\hat M'$ the corresponding Koszul factorizations, we can write
$$\zeta(\hat\Gamma_v) \cong \hat M''\otimes\hat{K}
\quad\text{and}\quad 
\zeta'(\hat\Gamma_v) \cong \hat M''\otimes\hat{K}'$$
where
$$\hat{K}=\left\{\begin{matrix}
g_2\ , & x_1x_2+(x_1+x_2)x_3-x_5x_6-(x_5+x_6)(a-x_5-x_6)\\
g_2'\ , & x_8x_9+x_7(x_8+x_9)-x_{10}x_{11}-(x_{10}+x_{11})(a-x_{10}-x_{11})
\end{matrix}\right\}$$
and
$$\hat{K}'=\left\{\begin{matrix}
h_2\ , & x_8x_9+x_7(x_8+x_9)-x_5x_6-(x_5+x_6)(a-x_5-x_6) \\
h_2'\ , & x_1x_2+(x_1+x_2)x_3-x_{10}x_{11}-(x_{10}+x_{11})(a-x_{10}-x_{11})
\end{matrix}\right\}.$$

To simplify notation define the polynomials $\alpha_{i,j,k}$ and $\beta_{i,j}$ by
$$
\alpha_{i,j,k} = x_ix_j+(x_i+x_j)x_k, \qquad
\beta_{i,j}    = x_ix_j+(x_i+x_j)(a-x_i-x_j).
$$
In terms of $\alpha_{i,j,k}$ and $\beta_{i,j}$ we have
\begin{equation}\label{eq:swapK1}
\hat{K}=
\left\{\begin{matrix}
2(\alpha_{1,2,3}+\beta_{5,6}) +4b\ , & \alpha_{1,2,3}-\beta_{5,6} \\
2(\alpha_{7,8,9}+\beta_{10,11})+4b\ , & \alpha_{7,8,9}-\beta_{10,11}
\end{matrix}\right\}
\end{equation}
and
\begin{equation}\label{eq:swapK2}
\hat{K}'=
\left\{\begin{matrix}
2(\alpha_{7,8,9}+\beta_{5,6})+4b\ , & \alpha_{7,8,9}-\beta_{5,6} \\
2(\alpha_{1,2,3}+\beta_{10,11})+4b\ , & \alpha_{1,2,3}-\beta_{10,11})
\end{matrix}\right\}.
\end{equation}

Factorizations $\hat{K}$ and $\hat{K}'\brak{1}$ can now be written in matrix form as 
$$\hat{K}=
\begin{pmatrix}R \\ R\end{pmatrix}
\xra{P}
\begin{pmatrix}R \\ R\end{pmatrix}\xra{Q}\begin{pmatrix}R \\ R\end{pmatrix}
\quad\text{and}\quad
\hat{K}'\brak{1}=
\begin{pmatrix}R \\ R\end{pmatrix}
\xra{P'}
\begin{pmatrix}R \\ R\end{pmatrix}\xra{Q'}\begin{pmatrix}R \\ R\end{pmatrix},$$
where
\begin{eqnarray*}
P &=&
\begin{pmatrix}2(\alpha_{1,2,3}+\beta_{5,6})+4b & \alpha_{7,8,9}-\beta_{10,11} \\
2(\alpha_{7,8,9}+\beta_{10,11})+4b & -\alpha_{1,2,3}+\beta_{5,6}\end{pmatrix}
\\
Q &=&
\begin{pmatrix}\alpha_{1,2,3}-\beta_{5,6} & \alpha_{7,8,9}-\beta_{10,11} \\
2(\alpha_{7,8,9}+\beta_{10,11})+4b & -2(\alpha_{1,2,3}+\beta_{5,6})-4b\end{pmatrix}
\end{eqnarray*}
and
\begin{eqnarray*}
P' &=&
\begin{pmatrix}-\alpha_{7,8,9}+\beta_{5,6} & -\alpha_{1,2,3}+\beta_{10,11} \\
-2(\alpha_{1,2,3}+\beta_{10,11})-4b & 2(\alpha_{7,8,9}+\beta_{5,6})+4b\end{pmatrix}
\\
Q' &=&
\begin{pmatrix}
-2(\alpha_{7,8,9}+\beta_{5,6})-4b & -\alpha_{1,2,3}+\beta_{10,11} \\
-2(\alpha_{1,2,3}+\beta_{10,11})-4b & \alpha_{7,8,9}-\beta_{5,6}\end{pmatrix}
\end{eqnarray*}

Define a homomorphism $\psi=(f_0,f_1)$ from $\hat{K}$ to $\hat{K}'\brak{1}$ by the pair of matrices
$$\left(
\begin{pmatrix}
1 & -\frac{1}{2} \vspace{1ex} \\ -1 & -\frac{1}{2}
\end{pmatrix},
\begin{pmatrix}
\frac{1}{2} & -\frac{1}{2} \vspace{1ex}\\ -1 & -1
\end{pmatrix}
\right).$$
It is immediate that $\psi$ is an isomorphism with inverse $(f_1,f_0)$.

It follows that $1_{\hat{M}''}\otimes\psi$ defines an isomorphism between $\zeta(\hat{\Gamma}_v)$ and $\zeta'(\hat{\Gamma}_v)\brak{1}$.
\end{proof}

Although having $\psi$ in this form will be crucial in the proof of Proposition~\ref{prop:ident-vert} an alternative description will be useful in Section~\ref{sec:iso}. Note that we can reduce $\hat K$ and $\hat K'$ in 
Equations~\ref{eq:swapK1} and~\ref{eq:swapK2} further by using the row 
operations $[1,2]_1\circ[1,2]'_{-2}$. We obtain 
$$\hat{K}\cong
\left\{
\begin{matrix}
-4(\alpha_{7,8,9}-\beta_{5,6}), & \alpha_{1,2,3}-\beta_{5,6} \\
2(\alpha_{7,8,9}+\beta_{10,11}+\alpha_{1,2,3}-\beta_{5,6}), & \alpha_{1,2,3}+\alpha_{7,8,9}-\beta_{5,6}-\beta_{10,11} 
\end{matrix}
\right\}$$
and
$$\hat{K}'\cong
\left\{
\begin{matrix}
-4(\alpha_{1,2,3}-\beta_{5,6}), & \alpha_{7,8,9}-\beta_{5,6} \\
2(\alpha_{7,8,9}+\beta_{10,11}+\alpha_{1,2,3}-\beta_{5,6}), & \alpha_{1,2,3}+\alpha_{7,8,9}-\beta_{5,6}-\beta_{10,11} 
\end{matrix}
\right\}.$$
Since the second lines in $\hat{K}$ and $\hat{K}'$ are equal we can 
write 
$$\hat{K}\cong
\left\{
-4(\alpha_{7,8,9}-\beta_{5,6}),\ \alpha_{1,2,3}-\beta_{5,6}\right\}\otimes \hat{K}_2$$
and
$$\hat{K}'\cong
\left\{
-4(\alpha_{1,2,3}-\beta_{5,6}),\ \alpha_{7,8,9}-\beta_{5,6}\right\} \otimes\hat{K}_2.$$

An isomorphism $\psi'$ between $\hat{K}$ and $\hat{K'}\brak{1}$ can now be 
given as the tensor product between $\left( -m(2),\ -m(1/2)\right)$ and the 
identity homomorphism of $\hat{K}_2$. 

\begin{cor}\label{cor:swap}
The homomorphisms $\psi$ and $\psi'$ are equivalent.
\end{cor}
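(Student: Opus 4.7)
The plan is to verify the equivalence by transporting $\psi$ through the row operations $[1,2]_1\circ[1,2]'_{-2}$ used to simplify $\hat{K}$ and $\hat{K}'$ into the forms appearing in the definition of $\psi'$, and checking that the transported morphism coincides with the tensor product description.

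First, the Row operations Lemma provides explicit isomorphisms of matrix factorizations for each of $[1,2]_1$ and $[1,2]'_{-2}$, whose underlying module maps are triangular shear matrices in each $\bZ/2\bZ$-graded piece. Their composition yields concrete isomorphisms $\Phi\colon\hat{K}\to\hat{K}_1\otimes\hat{K}_2$ and $\Phi'\colon\hat{K}'\to\hat{K}_1'\otimes\hat{K}_2$, where $\hat{K}_1=\{-4(\alpha_{7,8,9}-\beta_{5,6}),\alpha_{1,2,3}-\beta_{5,6}\}$, $\hat{K}_1'=\{-4(\alpha_{1,2,3}-\beta_{5,6}),\alpha_{7,8,9}-\beta_{5,6}\}$, and $\hat{K}_2$ is the common one-line factor identified in the text. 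The inverses of $\Phi$ and $\Phi'$ are themselves triangular shears and are therefore equally easy to write down explicitly.

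Next, form the composite $\widetilde{\psi}:=(\Phi'\brak{1})\circ\psi\circ\Phi^{-1}$, a morphism from $\hat{K}_1\otimes\hat{K}_2$ to $\hat{K}_1'\brak{1}\otimes\hat{K}_2$. Expand it as a product of $2\times 2$ matrices in each $\bZ/2\bZ$-grading, using the explicit entries of $\psi$ recorded in the proof of Lemma~\ref{lem:swapedges} together with the shear matrices from the first step. The claim to verify is that $\widetilde{\psi}=(-m(2),-m(1/2))\otimes\id_{\hat{K}_2}$, which is precisely $\psi'$. Concretely, this requires showing that the polynomial corrections introduced by the shears cancel against the off-diagonal entries of $\psi$, leaving only the constants $-2$ and $-1/2$ along the $\hat{K}_1$ direction and the identity along $\hat{K}_2$.

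The main obstacle is the matrix bookkeeping in this cancellation: although each ingredient is elementary, the $2\times 2$ product over $R$ must be checked entry by entry to confirm that no residual polynomial terms in the $\alpha_{i,j,k},\beta_{i,j}$ survive. Since both $\hat{K}$ and $\hat{K}'$ have potential zero and every identification in sight is a strict isomorphism in $\mf{R}$, no separate homotopy correction is required; matching $\widetilde{\psi}$ with $\psi'$ on the nose then yields $\psi=\psi'$ in $\hmf{R}$, establishing the corollary.
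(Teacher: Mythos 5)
Your plan — conjugating $\psi$ by the row-operation isomorphisms $[1,2]_1\circ[1,2]'_{-2}$ (and their inverses on the domain side) and then checking by explicit matrix multiplication that the result factors as $(-m(2),-m(1/2))\otimes\id_{\hat{K}_2}$ — is the same basic strategy the paper uses, and the matrix bookkeeping you anticipate is indeed the bulk of the work. However, there is a concrete missing ingredient: the two descriptions of $\hat{K}'\brak{1}$ that occur here are \emph{not literally the same presentation}. The target of $\psi$ was written down using the convention that $\brak{1}$ sends $(d_0,d_1)$ to $(-d_1,-d_0)$, whereas the Koszul-matrix form used to define $\psi'$ (and the tensor factorization $\hat{K}_1'\otimes\hat{K}_2$) is obtained by switching the terms and changing the signs in the \emph{first row} of the Koszul matrix for $\hat{K}'$. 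These two presentations of the shifted factorization agree up to a non-trivial isomorphism
$$T=\left(\begin{pmatrix}-1 & 0 \\ 0 & 1\end{pmatrix},\ \begin{pmatrix}-1 & 0 \\ 0 & 1\end{pmatrix}\right),$$
and if you carry out the conjugation $(\Phi'\brak{1})\circ\psi\circ\Phi^{-1}$ without inserting $T$, the result is $T^{-1}\psi'$, not $\psi'$: the off-diagonal polynomial terms cancel as you expect, but you are left with a sign mismatch in one of the diagonal entries. The paper resolves this by computing the composite $T[1,2]_{1}[1,2]'_{-2}\,\psi\,[1,2]'_{2}[1,2]_{-1}$, which lands exactly on $\bigl(\begin{smallmatrix}-2 & 0\\0 & -1/2\end{smallmatrix}\bigr)$ and $\bigl(\begin{smallmatrix}-1/2 & 0\\0 & -2\end{smallmatrix}\bigr)$. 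So you should add a step identifying $T$ and composing with it; otherwise the "on the nose" match you are aiming for will not close.
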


\begin{proof}
The first thing to note is that we obtained the homomorphism 
$\psi$ by first writing the differential $(d_0,d_1)$ in $\hat{K}'$ as 
$2\times 2$ matrices and then its shift $\hat{K}'\brak{1}$ using 
$(-d_1,-d_0)$, but in the computation of $\psi'$ we switched the terms 
and changed the signs in the first line of the Koszul matrix corresponding
 to $\hat{K}'$. The two factorizations obtained are isomorphic by a 
non-trivial isomorphism, which is given by
 $$T=\left(
\begin{pmatrix}
-1 & 0 \\ 0 & 1
\end{pmatrix},\
\begin{pmatrix}
-1 & 0 \\ 0 & 1
\end{pmatrix}
\right).$$
Bearing in mind that $\psi$ and $\psi'$ have $\bZ/2\bZ$-degree 1 and using
$$
\left[1,2\right]_\lambda = \left(\begin{pmatrix}1 &0 \\ 0 & 1\end{pmatrix},\
\begin{pmatrix}1 & -\lambda \\ 0 & 1\end{pmatrix}\right),
\qquad
\left[1,2\right]'_{\lambda} = \left(\begin{pmatrix}1 &0 \\ -\lambda & 1\end{pmatrix},\
\begin{pmatrix}1 & 0 \\ 0 & 1\end{pmatrix}\right),
$$
it is straightforward to check that the composite homomorphism
$T[1,2]_{1}[1,2]'_{-2}\psi[1,2]'_{2}[1,2]_{-1}$ is
$$\left(
\begin{pmatrix}
-2 & 0 \\ 0 & -1/2
\end{pmatrix},\
\begin{pmatrix}
-1/2 & 0 \\ 0 & -2
\end{pmatrix}
\right)$$
which is the tensor product of $\left( -m(2),\ -m(1/2)\right)$ and the 
identity homomorphism of $\hat{K}_2$.
\end{proof}

\begin{proof}[Proof of Proposition~\ref{prop:ident-vert}]
We claim that $\zeta'(\hat\Gamma_v)\cong\zeta(\hat\Gamma_v)\brak{k}$ with $k$ 
a nonnegative integer. We transform $\zeta'(\hat\Gamma_v)$ into 
$\zeta(\hat\Gamma_v)\brak{k}$ by repeated application of Lemma~\ref{lem:swapedges} 
as follows. Choose a pair of vertices connected by a virtual edge in 
$\zeta(\hat\Gamma_v)$. Do nothing if the same pair is connected by a virtual edge 
in $\zeta'(\hat\Gamma_v)$ and use Lemma~\ref{lem:swapedges} to connect them in the 
opposite case. Iterating this procedure we can turn 
$\zeta'(\hat\Gamma_v)$ into $\zeta(\hat\Gamma_v)$ with a shift in the 
$\bZ/2\bZ$-grading by ($k\mod 2$) where $k$ is the number of times we applied 
Lemma~\ref{lem:swapedges}. 

It remains to show that the shift in the $\bZ/2\bZ$-grading is independent of 
the choices one makes. To do so we label the vertices of $\Gamma_v$ of 
($+$)- and ($-$)-type by $(v_1^+,\ldots,v_k^+)$ and 
$(v_1^-,\ldots,v_k^-)$ respectively. Any complete identification of vertices in  
$\Gamma_v$ is completely determined by an ordered set 
$J_\zeta=(v_{\sigma(1)}^-,\ldots ,v_{\sigma(k)}^-)$,  
with the convention that $v_j^+$ is connected through a virtual edge to 
$v_{\sigma(j)}^-$ for $1\leq j\leq k$. Complete identifications of the vertices in 
$\Gamma_v$ are therefore in one-to-one correspondence with the elements of 
the symmetric group on $k$ letters $S_k$. Any transformation of $\zeta'(\hat\Gamma)$ into 
$\zeta(\hat\Gamma)$ by repeated application of Lemma~\ref{lem:swapedges} 
corresponds to a sequence of elementary transpositions whose composite is equal to 
the quotient of the permutations corresponding to $J_{\zeta'}$ and $J_\zeta$. 
We conclude that the shift in the $\bZ/2\bZ$-grading is well-defined, because 
any decomposition of a given permutation into elementary transpositions 
has a fixed parity.
\end{proof}

The next thing to show is that the isomorphisms in the proof of 
Proposition~\ref{prop:ident-vert} 
do not depend on the choices made in that proof. Choose an ordering of the vertices of 
$\Gamma_v$ such that $v^+_i$ is paired with $v^-_i$ for all $i$ and let 
$\zeta$ be the corresponding vertex identification. Use the linear entries in the 
Koszul matrix of $\zeta(\hat{\Gamma}_v)$ to exclude one variable corresponding to an 
edge entering in each vertex of $(-)$-type, as in the proof of 
Lemma~\ref{lem:swapedges}, so that the resulting Koszul factorization 
has the form $\zeta(\hat{\Gamma}_v)=\hat{K}_{lin}\otimes\hat{K}_{quad}$ where 
$\hat{K}_{lin}$ (resp. $\hat{K}_{quad}$) consists of the lines in 
$\zeta(\hat{\Gamma}_v)$ having linear (resp. quadratic) terms as its right entries. 
From the proof of Lemma~\ref{lem:swapedges} we see that changing a pair of 
virtual edges leaves $\hat{K}_{lin}$ unchanged. 

Let $\sigma_i$ be the element of $S_k$ corresponding to the elementary transposition, 
which sends the complete identification $\left(v^-_1,\ldots ,v^-_i,v^-_{i+1},\ldots,v^-_k\right)$ to $\left(v^-_1,\ldots ,v^-_{i+1},v^-_i,\ldots,v^-_k\right)$, 
and let $\Psi_i=1_{/i}\otimes \psi$ be the corresponding isomorphism of matrix 
factorizations from the proof of Lemma~\ref{lem:swapedges}. The homomorphism 
$\psi$ only acts 
on the $i$-th and $(i+1)$-th lines 
in $\hat{K}_{quad}$ and $1_{/i}$ is the identity morphism on the remaining lines. 
For the composition $\sigma_i\sigma_j$ we have the composite homomorphism 
$\Psi_i\Psi_j$.

\begin{lem}\label{lem:repsym}
The assignment $\sigma_i\mapsto \Psi_i$ defines a representation of $S_k$ on 
$\zeta(\hat{\Gamma})_0\oplus\zeta(\hat{\Gamma})_1$.
\end{lem}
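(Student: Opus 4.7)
The plan is to verify that $\sigma_i \mapsto \Psi_i$ respects the Coxeter presentation of $S_k$, namely the relations
\begin{align*}
\sigma_i^2 &= e, \\
\sigma_i\sigma_j &= \sigma_j\sigma_i \quad (|i-j|\geq 2), \\
\sigma_i\sigma_{i+1}\sigma_i &= \sigma_{i+1}\sigma_i\sigma_{i+1}.
\end{align*}
Because $\Psi_i = 1_{/i}\otimes\psi$ acts as the identity on every line of $\hat{K}_{quad}$ except the $i$-th and $(i+1)$-th (and leaves $\hat{K}_{lin}$ untouched, as already observed after the proof of Proposition~\ref{prop:ident-vert}), each relation reduces to a purely local identity among tensor factors of $\hat{K}_{quad}$.

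The far-commutation relation is essentially free: for $|i-j|\geq 2$ the two homomorphisms $\psi$ act on disjoint pairs of lines of the Koszul matrix, so $\Psi_i\Psi_j = 1_{/i,j}\otimes(\psi\otimes\psi) = \Psi_j\Psi_i$ by functoriality of the tensor product. For the involution relation $\Psi_i^2 = \mathrm{id}$, I would just multiply the two matrices defining $\psi = (f_0,f_1)$ in the proof of Lemma~\ref{lem:swapedges}: a direct computation gives $f_0 f_1 = f_1 f_0 = I$, so the composition of $\psi : \hat{K}\to\hat{K}'\brak{1}$ with its analogue $\hat{K}'\to\hat{K}\brak{1}$ (which has the same matrix form by the symmetry of the construction in $\alpha$ and $\beta$) is the identity on $\hat{K}$, the overall $\brak{2}$ shift being trivial.

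The main obstacle is the braid relation. To handle it, I would restrict to the local situation of three consecutive pairs of vertices, so that $\hat{K}_{quad}$ contains three relevant lines with right entries of the form $\alpha_{\bullet}-\beta_{\bullet}$, and I would write out both composite homomorphisms $\Psi_i\Psi_{i+1}\Psi_i$ and $\Psi_{i+1}\Psi_i\Psi_{i+1}$ explicitly on the $\bZ/2\bZ$-graded summands $\hat{K}_0\oplus\hat{K}_1$. Each side is a $2\times 2$ block matrix whose $2\times 2$ blocks come from products of the matrices $f_0$, $f_1$ of Lemma~\ref{lem:swapedges}. Both sides represent the permutation that reverses three elements, and a direct computation shows that the two threefold products of the $f_0,f_1$ matrices agree. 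Alternatively, one can use the equivalent form $\psi'$ of Corollary~\ref{cor:swap}, in which $\psi$ is (up to the non-trivial isomorphism $T$ and the row operations) the tensor product of scalar multiplications $(-m(2),-m(1/2))$ with an identity morphism on the remaining rows; since scalars commute, both sides of the braid relation become manifestly equal once one tracks the bookkeeping of the $T$'s and the row operations.

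Having verified the three Coxeter relations, the assignment $\sigma_i\mapsto\Psi_i$ extends uniquely to a representation of $S_k$ on $\zeta(\hat\Gamma)_0\oplus\zeta(\hat\Gamma)_1$, as claimed.
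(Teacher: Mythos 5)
Your overall strategy (verify the Coxeter relations of $S_k$) is exactly the paper's, and your treatment of the involution $\Psi_i^2=\id$ and of the far-commutation $\Psi_i\Psi_j=\Psi_j\Psi_i$ for $|i-j|\geq 2$ is sound. The gap is in the braid relation. The relevant local object is the tensor product of the \emph{three} rank-two Koszul factorizations occupying lines $i,i+1,i+2$, so the local module $\hat{K}'_0\oplus\hat{K}'_1$ has rank $8$ and the maps $\psi\otimes 1$, $1\otimes\psi$ are $8\times 8$ matrices, not ``$2\times 2$ block matrices whose $2\times 2$ blocks come from products of $f_0,f_1$.'' More importantly, you never invoke the Koszul sign rule for tensoring homomorphisms of $\bZ/2\bZ$-graded matrix factorizations, $(f\otimes g)(v\otimes w)=(-1)^{|g||v|}(fv\otimes gw)$. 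Because $\psi$ has odd $\bZ/2\bZ$-degree, $\psi\otimes 1$ and $1\otimes\psi$ acquire nontrivial signs from this convention, and these signs are essential: without them the Yang-Baxter identity $(\psi\otimes 1)(1\otimes\psi)(\psi\otimes 1)=(1\otimes\psi)(\psi\otimes 1)(1\otimes\psi)$ fails, and hence so would the braid relation. The paper's proof explicitly records this convention before carrying out the $8\times 8$ computation. Your alternative route via Corollary~\ref{cor:swap} is also too thin: the isomorphism $T$ and the row operations $[1,2]_1[1,2]'_{-2}$ depend on which pair of rows is being acted upon, so the three occurrences of $\psi$ in the braid relation are conjugated by \emph{different} isomorphisms, and one cannot simply pull these out and commute the residual scalar multiplications without further argument. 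To close the gap, write $\psi$ as a single odd $4\times 4$ matrix on $\hat{K}_0\oplus\hat{K}_1$, form $\psi\otimes 1$ and $1\otimes\psi$ on the rank-$8$ space with the Koszul signs made explicit, and verify Yang-Baxter by direct matrix multiplication, as the paper does.
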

\begin{proof}
Let $\hat{K}$ be the Koszul factorization corresponding to the lines $i$ and $i+1$ in $\zeta(\hat{\Gamma}_v)$ and let $\ket{00}$, $\ket{11}$, $\ket{01}$ and $\ket{10}$ be the standard basis vectors of $\hat{K}_0\oplus\hat{K}_1$.
The homomorphism $\psi$ found in the proof of Lemma~\ref{lem:swapedges} can be written 
as only one matrix acting on $\hat{K}_0\oplus\hat{K}_1$:
$$\psi=\begin{pmatrix}
0 & 0 & \frac{1}{2} & -\frac{1}{2}\vspace{1ex} \\
0 & 0 & -1 & -1  \vspace{1ex}\\
1 & -\frac{1}{2} & 0 & 0 \vspace{1ex}\\
-1 & -\frac{1}{2} & 0 & 0
\end{pmatrix}.$$
We have that $\psi^2$ is the identity matrix and therefore it follows that $\Psi_i^2$ 
is the identity homomorphism on $\zeta(\hat{\Gamma}_v)$. It is also immediate that 
$\Psi_i\Psi_j=\Psi_j\Psi_i$ for $|i-j|>1$. To complete the proof we need to show that 
$\Psi_i\Psi_{i+1}\Psi_i=\Psi_{i+1}\Psi_i\Psi_{i+1}$, which we do by explicit 
computation of the corresponding matrices. Let $\hat{K}'$ be the Koszul matrix 
consisting of the three lines $i$, $i+1$ and $i+2$ in $\hat{K}_{quad}$. To show 
that $\Psi_i\Psi_{i+1}\Psi_i=\Psi_{i+1}\Psi_i\Psi_{i+1}$ is equivalent to showing 
that $\psi$ satisfies the Yang-Baxter equation
\begin{equation}\label{eq:YB}
(\psi\otimes 1)(1\otimes\psi)(\psi\otimes 1)=(1\otimes\psi)(\psi\otimes 1)
(1\otimes\psi),
\end{equation}
with $1\otimes\psi$ and $\psi\otimes 1$ acting on $\hat{K}'$. Note that, in general, 
the tensor product of two homomorphisms of matrix factorizations $f$ and $g$ is 
defined by 
$$(f\otimes g)\ket{v\otimes w}=(-1)^{|g||v|}\ket{fv\otimes gw}.$$ Let $\ket{000}$, 
$\ket{011}$, $\ket{101}$, $\ket{110}$, $\ket{001}$, $\ket{010}$, $\ket{100}$ and 
$\ket{111}$ be the standard basis vectors of $\hat{K}'_0\oplus\hat{K}'_1$. 
With respect to this basis the homomorphisms $\psi\otimes 1$ and $1\otimes\psi$ have 
the form of block matrices
$$\psi\otimes 1=
\left(
\begin{array}{c|c}
 0  & 
\begin{array}{cccc}
0 & \frac{1}{2} & -\frac{1}{2} & 0  \vspace{1ex}\\ 
1 & 0     & 0 & -\frac{1}{2}  \vspace{1ex} \\
-1 & 0     & 0 & -\frac{1}{2}  \vspace{1ex}\\
 0  & -1 & -1 & 0  
\end{array}
\\ \hline
\begin{array}{cccc}
0  & \frac{1}{2} & -\frac{1}{2} & 0 \vspace{1ex}\\ 
1 & 0     & 0 & -\frac{1}{2}  \vspace{1ex}\\
-1 & 0     & 0 & -\frac{1}{2}  \vspace{1ex} \\
0  & -1 & -1 & 0
\end{array} & 0
 \end{array}\right)$$
and
$$1\otimes\psi=
\left(
\begin{array}{c|c}
 0  & 
\begin{array}{cccc}
\frac{1}{2} & -\frac{1}{2} & 0 & 0  \vspace{1ex}\\ 
-1 & -1     & 0 & 0  \vspace{1ex} \\
0 & 0     & -1 & \frac{1}{2}  \vspace{1ex}\\
 0  & 0 & 1 & \frac{1}{2}  
\end{array}
\\ \hline
\begin{array}{cccc}
1  & -\frac{1}{2} & 0 & 0 \vspace{1ex}\\ 
-1 & -\frac{1}{2}     & 0 & 0  \vspace{1ex}\\
0 & 0     & -\frac{1}{2} & \frac{1}{2}  \vspace{1ex} \\
0  & 0 & 1 & 1
\end{array} & 0
 \end{array}\right)
.$$ 
By a simple exercise in matrix multiplication we find that both sides 
in Equation~\ref{eq:YB} are equal and it follows that 
$\Psi_i\Psi_{i+1}\Psi_i=\Psi_{i+1}\Psi_i\Psi_{i+1}$.
\end{proof}

\begin{cor}
\label{cor:caniso}
The isomorphism $\zeta'(\hat\Gamma_v)\cong\zeta(\hat\Gamma_v)\brak{k}$ 
in Proposition~\ref{prop:ident-vert} is uniquely determined by $\zeta'$ and $\zeta$. 
\end{cor}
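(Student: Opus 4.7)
The plan is to deduce the corollary essentially directly from Lemma~\ref{lem:repsym}, using the standard presentation of the symmetric group. The point is that the isomorphism $\zeta'(\hat\Gamma_v)\cong\zeta(\hat\Gamma_v)\brak{k}$ constructed in the proof of Proposition~\ref{prop:ident-vert} is of the form $\Psi_{i_1}\Psi_{i_2}\cdots\Psi_{i_m}$, where $\sigma_{i_1}\sigma_{i_2}\cdots\sigma_{i_m}$ is any decomposition into elementary transpositions of the permutation $\sigma\in S_k$ with $J_{\zeta'}=\sigma\cdot J_{\zeta}$. So the content to verify is that this composite depends only on $\sigma$, not on the choice of decomposition.

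First I would recall the Coxeter presentation of $S_k$: generators $\sigma_1,\dots,\sigma_{k-1}$ subject to $\sigma_i^2=1$, $\sigma_i\sigma_j=\sigma_j\sigma_i$ for $|i-j|>1$, and the braid relation $\sigma_i\sigma_{i+1}\sigma_i=\sigma_{i+1}\sigma_i\sigma_{i+1}$. Lemma~\ref{lem:repsym} says precisely that the assignment $\sigma_i\mapsto \Psi_i$ respects each of these three relations, and therefore extends to a group homomorphism $\rho\colon S_k\to\mathrm{Aut}\bigl(\zeta(\hat\Gamma_v)_0\oplus\zeta(\hat\Gamma_v)_1\bigr)$. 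In particular, the composite $\Psi_{i_1}\cdots\Psi_{i_m}$ equals $\rho(\sigma_{i_1}\cdots\sigma_{i_m})=\rho(\sigma)$, which is manifestly independent of the chosen reduced or non-reduced expression for $\sigma$.

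Next I would verify that the permutation $\sigma$ itself is intrinsic to the pair $(\zeta,\zeta')$. This is essentially by definition: a complete identification corresponds to a bijection between $(v^+_1,\dots,v^+_k)$ and $(v^-_1,\dots,v^-_k)$, so fixing the ordering of the positive vertices identifies $\zeta$ and $\zeta'$ with two elements of $S_k$, and $\sigma$ is their quotient. Any sequence of applications of Lemma~\ref{lem:swapedges} transforming $\zeta'$ into $\zeta$ realizes a factorization of $\sigma$ into elementary transpositions, so the resulting isomorphism is $\rho(\sigma)$ in every case.

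The only substantive step is Lemma~\ref{lem:repsym} itself, but that is already proved above by direct matrix computation. So the corollary follows with no further work beyond assembling these observations: the isomorphism $\zeta'(\hat\Gamma_v)\cong\zeta(\hat\Gamma_v)\brak{k}$ is precisely $\rho(\sigma)$, which depends only on $\zeta$ and $\zeta'$.
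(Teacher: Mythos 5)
Your proposal is correct and takes essentially the same route as the paper: the paper's proof also invokes Lemma~\ref{lem:repsym} to conclude that the composite $\Psi_{i_1}\cdots\Psi_{i_m}$ depends only on the permutation $\sigma$ relating $\zeta'$ and $\zeta$, not on the chosen decomposition into transpositions. You spell out the underlying mechanism (the Coxeter presentation of $S_k$ and the resulting group homomorphism $\rho$) more explicitly than the paper does, but the argument is the same.
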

\begin{proof} Let $\sigma$ be the permutation that relates $\zeta'$ and $\zeta$. 
Recall that in the proof of Proposition~\ref{prop:ident-vert} we 
defined an isomorphism $\Psi_{\sigma}\colon\zeta'(\hat\Gamma_v)\to 
\zeta(\hat\Gamma_v)\brak{k}$ by writing $\sigma$ as a product of transpositions. 
The choice of these transpositions is not unique in general. However, 
Lemma~\ref{lem:repsym} shows that $\Psi_{\sigma}$ only depends on $\sigma$.     
\end{proof}

From now on we write $\hat\Gamma$ for the equivalence class of $\hat\Gamma_v$ under 
complete vertex identification. Graphically we represent the vertices of 
$\hat\Gamma$ as 
in Figure~\ref{fig:g-equiv}.
\begin{figure}[ht!]
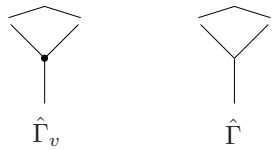

\labellist
\small\hair 2pt
\pinlabel $\hat\Gamma_v$ at 49 -34
\pinlabel $\hat\Gamma$ at 283 -34
\endlabellist
\centering
\figs{0.3}{hat3vertex}\qquad\qquad
\figs{0.3}{hat3vertex-z}
\bigskip
\caption{A vertex and its equivalence class under vertex identification}
\label{fig:g-equiv}
\end{figure}
\noindent We need to neglect the $\bZ/2\bZ$ grading, which we do by imposing 
that $\hat\Gamma$, for any closed web $\Gamma$, have only homology in degree zero, 
applying a shift 
if necessary.  

We also have to define the morphisms between $\hat\Gamma$ and $\hat\Lambda$. Let 
$\zeta(\hat\Gamma_v)$ and $\zeta'(\hat\Gamma_v)$ be representatives of $\hat\Gamma$ 
and $\zeta(\hat\Lambda_v)$ and $\zeta'(\hat\Lambda_v)$ be representatives of 
$\hat\Lambda$. Let 
$$f\in 
\Hom_{\mf{}}\left(\zeta(\hat\Gamma_v),\zeta(\hat\Lambda_v)\right)$$ 
and $$g\in \Hom_{\mf{}}\left(\zeta'(\hat\Gamma_v),\zeta'(\hat\Lambda_v)\right)$$ 
be two homomorphisms. 
We say that $f$ and $g$ are equivalent, denoted by $f\sim g$, if and only if 
there exists a commuting square
$$\xymatrix@C=15mm{
\zeta(\hat\Gamma_v) 
\ar[d]^f\ar[r]^(0.45){\cong}
& \zeta'(\hat\Gamma_v) 
\ar[d]^g \\
\zeta(\hat\Lambda_v) 
\ar[r]^(0.45){\cong}
& 
\zeta'(\hat\Lambda_v) 
}$$ 
with the horizontal isomorphisms being of the form as discussed in 
Proposition~\ref{prop:ident-vert}. 
The composition rule for these equivalence classes of homomorphisms, which relies 
on the choice of representatives within each class, is well-defined by 
Corollary~\ref{cor:caniso}. Note that we can take well-defined linear combinations of 
equivalence classes of homomorphisms by taking linear combinations of their 
representatives, as long as the latter have all the same source and 
the same target. By Corollary~\ref{cor:caniso}, homotopy equivalences are 
also well-defined on equivalence classes. We take 
$$\Hom(\hat\Gamma,\hat\Lambda)$$
to be the set of equivalence classes of homomorphisms of matrix factorizations 
between $\hat\Gamma$ and $\hat\Lambda$ modulo homotopy equivalence.   
The additive category that we get this way is denoted by 
$$\widehat{\foam}.$$

Note that we can define the homology of ${\hat\Gamma}$, for any 
closed web $\Gamma$. This group is well-defined up to isomorphism and we denote it 
by $\hat\hy(\Gamma)$.

Next we show how to define a link homology using the objects and morphisms in 
$\wfoam$. For any link $L$, first take the universal rational Khovanov-Rozansky 
cochain complex $\KR_{a,b,c}(L)$. The $i$-th cochain group $\KR_{a,b,c}^i(L)$ is 
given by the direct sum of cohomology groups of the form $\hy(\Gamma_v)$, where 
$\Gamma_v$ is a total flattening of $L$. By the remark above it makes sense 
to consider $\widehat{\KR}_{a,b,c}^i(L)$, for each $i$. The differential 
$d^i\colon \KR_{a,b,c}^i(L)\to\KR_{a,b,c}^{i+1}(L)$ induces a map 
$$\hat d^i\colon\widehat{\KR}_{a,b,c}^i(L)\to\widehat{\KR}_{a,b,c}^{i+1}(L),$$
for each $i$. The latter map is well-defined and therefore the homology 
$$\widehat{\HKR}^i_{a,b,c}(L)$$ is well-defined, for each $i$. 

Let $u\colon L\to L'$ be a link cobordism. Khovanov and Rozansky~\cite{KR} constructed 
a cochain map which induces a homomorphism 
$$\HKR_{a,b,c}(u)\colon\HKR_{a,b,c}(L)\to
\HKR_{a,b,c}(L').$$ The latter is only defined up to a $\bQ$-scalar. The induced 
map 
$$\widehat{\HKR}_{a,b,c}(u)\colon\widehat{\HKR}_{a,b,c}(L)\to
\widehat{\HKR}_{a,b,c}(L')$$ is also well-defined up to a $\bQ$-scalar. The 
following result follows immediately:

\begin{lem} $\HKR_{a,b,c}$ and $\widehat{\HKR}_{a,b,c}$ are naturally isomorphic 
as projective functors from $\Link$ to $\Modbg$. 
\end{lem}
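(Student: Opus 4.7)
The plan is to build a natural projective isomorphism at the level of cochain complexes and then pass to homology. For a link $L$, the complex $\KR_{a,b,c}(L)$ is a direct sum, over resolutions of the crossings, of Khovanov--Rozansky matrix factorizations associated to KR-webs (with thick edges); the complex $\widehat{\KR}_{a,b,c}(L)$ is the corresponding direct sum of the equivalence classes $\hat\Gamma$ associated to ordinary flattenings $\Gamma_v$. The observation driving everything is that each KR-web is obtained from an ordinary flattening by contracting each thick edge to a pair of $(\pm)$-vertices, so a KR-web is the same data as an ordinary web together with a canonical pairing of its vertices.

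First I would fix, for every resolution, the complete vertex identification $\zeta$ that pairs each $(+)$-vertex with the $(-)$-vertex it shares a thick edge with in the chosen KR-web. By Lemma~\ref{lem:fatedge}, the resulting representative $\zeta(\hat\Gamma_v)$ is isomorphic (up to the overall $\bZ/2\bZ$-shift already absorbed into the definition of $\hat\Gamma$) to the KR matrix factorization $\hat\Gamma_{KR}$ obtained by gluing the KR dumbbell factorizations. At the level of cochain groups this gives an isomorphism $\widehat{\KR}_{a,b,c}^i(L)\cong\KR_{a,b,c}^i(L)$ for each $i$. Independence of this isomorphism from the choice of $\zeta$ is guaranteed by Proposition~\ref{prop:ident-vert} and, crucially, by Corollary~\ref{cor:caniso}, which ensures that the canonical isomorphisms between different complete identifications are uniquely determined.

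Next I would check compatibility with the differential. The differential $d^i$ on each summand is built from the elementary maps $\chi_0$ and $\chi_1$ of Subsection~\ref{ssec:diffs} (together with the unit and trace maps used to define the Reidemeister chain maps), and these were defined on KR matrix factorizations attached to thick edges. Under the identification $\zeta(\hat\Gamma_v)\cong\hat\Gamma_{KR}$ of Lemma~\ref{lem:fatedge}, the same maps define the differential in $\widehat{\KR}_{a,b,c}(L)$. So the cochain-level isomorphisms above intertwine $d^i$ with $\hat d^i$, and hence descend to an isomorphism $\widehat{\HKR}_{a,b,c}^i(L)\cong \HKR_{a,b,c}^i(L)$ in each cohomological degree. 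For a link cobordism $u$, the chain map constructed by Khovanov--Rozansky uses only the elementary homomorphisms ($\chi_0,\chi_1$, $\iota,\varepsilon$, and the saddle map $\eta$), each of which is already defined on matrix factorizations attached to KR-webs, so the same chain map represents $\widehat{\HKR}_{a,b,c}(u)$ under the identification. Naturality in $u$ is then inherited from naturality of $\HKR_{a,b,c}$.

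The only genuinely non-trivial point is the ``projective'' qualifier: the maps $\HKR_{a,b,c}(u)$ are only defined up to a nonzero rational scalar because of the sign ambiguity in the saddle homomorphism $\eta$ on open KR-webs, and the isomorphism $\zeta(\hat\Gamma_v)\cong\hat\Gamma_{KR}$ of Lemma~\ref{lem:fatedge} is itself determined only up to such choices. Both ambiguities however live in the same $\bQ^\times$, so the square comparing $\HKR_{a,b,c}(u)$ and $\widehat{\HKR}_{a,b,c}(u)$ commutes up to a global scalar, which is precisely what is required for a natural isomorphism of projective functors $\Link\to\Modbg$. This scalar bookkeeping is where I expect the main subtlety to lie, but it is already the exact same subtlety present in the definition of $\HKR_{a,b,c}$ itself, so no further work is needed beyond invoking Corollary~\ref{cor:caniso}.
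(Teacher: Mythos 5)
Your proposal is correct and takes essentially the same approach as the paper, which actually gives no explicit proof at all: the lemma is stated as following ``immediately'' from the preceding construction, where $\widehat{\KR}_{a,b,c}^i(L)$ and $\hat d^i$ are defined to be induced from $\KR_{a,b,c}^i(L)$ and $d^i$, and well-definedness is supplied by Proposition~\ref{prop:ident-vert} and Corollary~\ref{cor:caniso} exactly as you invoke them. You are a bit more explicit than the paper in routing the cochain-group identification through Lemma~\ref{lem:fatedge} and a fixed complete identification $\zeta$, but this is precisely what the paper leaves implicit; the only very minor quibble is that $\iota$ and $\varepsilon$ enter the Reidemeister chain maps rather than the cube differential itself, which is built solely from $\chi_0$ and $\chi_1$, but this does not affect the argument.
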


In the next section we will show that $U_{a,b,c}$ and $\widehat{\HKR}_{a,b,c}$ 
are naturally isomorphic as projective functors. 

By Lemma~\ref{lem:KR-moy} we also get the following

\begin{lem}\label{lem:KhK-mf}
We have the Khovanov-Kuperberg decompositions in $\wfoam$:
$$
\widehat{\unknot\Gamma} 
\cong
\hatunknot\otimes_{\bQ[a,b,c]}
\hat\Gamma 
\rlap{\hspace{18.2ex}\text{(Disjoint Union)}}
$$
$$
\figins{-9.5}{0.4}{digon-mf}   
\cong 
\figins{-9.5}{0.4}{arc-mf}\{-1\}\oplus 
\figins{-9.5}{0.4}{arc-mf}\{1\}
\rlap{\hspace{17.7ex}\text{(Digon Removal)}}
$$
$$
\figins{-8.5}{0.4}{square1-mf}
\cong
\figins{-8.5}{0.405}{sqr1a}\oplus
\figins{-8.5}{0.4}{sqr1b}
\rlap{\hspace{16.3ex}\text{(Square Removal)}}
$$
\end{lem}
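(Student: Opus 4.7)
The plan is to reduce each decomposition to its KR-web analogue from Lemma~\ref{lem:KR-moy}, by first choosing a convenient complete identification of vertex pairs and then invoking Lemma~\ref{lem:fatedge} to translate our vertex factorization into the standard thick-edge KR factorization. Once we have rewritten both sides as honest KR matrix factorizations, Lemma~\ref{lem:KR-moy} gives the desired isomorphisms; Proposition~\ref{prop:ident-vert} together with Corollary~\ref{cor:caniso} then ensures that everything descends to the equivalence classes which form the objects and morphisms of $\wfoam$.

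More concretely, for \emph{Disjoint Union} the statement is essentially tautological: the web $\unknot\Gamma$ has no vertices in common between the two components, so its vertex factorization is the tensor product (over $\bQ[a,b,c]$) of the factorizations of $\unknot$ and $\Gamma$, and this survives complete vertex identification within each component. For \emph{Digon Removal}, I would take the web with a digon and pair the two trivalent vertices of the digon by a virtual edge; Lemma~\ref{lem:fatedge} then identifies the resulting factorization (in the region of the digon) with the KR dumbbell factorization attached to a digon KR-web. The first decomposition of Lemma~\ref{lem:KR-moy} splits this as a direct sum of two shifted copies of the arc KR-factorization, which in turn (again by Lemma~\ref{lem:fatedge}, read backwards, or just because the arc KR-factorization coincides with the arc factorization in an ordinary web) corresponds to the shifted copies of $\figins{-9.5}{0.4}{arc-mf}$. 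The \emph{Square Removal} is analogous: pair off the four vertices of the square along its two opposite edges by virtual edges, apply Lemma~\ref{lem:fatedge} twice to recognize the resulting KR-web as the one appearing on the left of the third isomorphism in Lemma~\ref{lem:KR-moy}, and translate the decomposition back to ordinary webs on the right.

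The only delicate point is the bookkeeping. The vertex factorizations carry shifts $\{-3/2\}\brak{1/2}$ per vertex, and the isomorphisms of Lemma~\ref{lem:KR-moy} are stated for KR-webs without vertex variables; one has to check that after excluding vertex variables via Lemma~\ref{lem:exvar} the $q$-shifts on the two sides of each decomposition match. Since in $\wfoam$ we have already agreed to normalise the $\bZ/2\bZ$-grading so that closed webs have homology concentrated in degree zero, the $\brak{\cdot}$-shifts drop out and only the $q$-shifts need to be tracked, which is a direct computation from the explicit Koszul matrices in Lemma~\ref{lem:fatedge}.

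The main obstacle I expect is checking that the isomorphisms obtained this way are well-defined on equivalence classes, i.e.\ independent of the choice of complete vertex identification used to turn the ordinary web into a KR-web. This is where Proposition~\ref{prop:ident-vert} and Corollary~\ref{cor:caniso} do the real work: any two such identifications are related by a canonical isomorphism $\Psi_\sigma$ determined by a permutation of vertex pairs, and the isomorphisms coming from Lemma~\ref{lem:KR-moy} commute with these $\Psi_\sigma$ up to homotopy because the decompositions of Lemma~\ref{lem:KR-moy} are local and only involve the arcs of the digon/square, while the $\Psi_\sigma$ acts by row operations on the quadratic part of the Koszul matrix that is unaffected by the local splitting. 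Thus the three decompositions descend to genuine isomorphisms in $\wfoam$, which is what the lemma asserts.
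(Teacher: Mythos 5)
Your proposal is correct, and it is exactly the route the paper explicitly acknowledges but declines to take: just before its proof the paper remarks that ``Although Lemma~\ref{lem:KhK-mf} follows from Lemma~\ref{lem:KR-moy} and Lemma~\ref{lem:fatedge}, an explicit proof will be useful in the sequel.'' Your argument --- choose a convenient complete identification, translate via Lemma~\ref{lem:fatedge} into the KR thick-edge picture, invoke Lemma~\ref{lem:KR-moy}, and use Proposition~\ref{prop:ident-vert} and Corollary~\ref{cor:caniso} to descend to $\wfoam$ --- is logically sound and shorter. What it buys less of is concrete information about the decomposition maps themselves. The paper instead constructs the projections and inclusions explicitly as composites of $\imath$, $\varepsilon$, $\chi_0$, $\chi_1$ (i.e., as $\widehat{\,\,}$-images of the cup, cap, zip and unzip foams, composed with multiplications by edge variables), and this explicit realization is what gets used later: in the proof of Lemma~\ref{lem:widehat} it is precisely the observation that ``all maps in the proof of Lemma~\ref{lem:KhK-mf} are induced by cobordisms with a particular slicing'' that lets one conclude $\widehat{\Ext}(\emptyset,\Gamma)$ is generated by maps $\hat u$ coming from sliced singular cobordisms, which is the crux of the reduction to closed cobordisms; and in the well-definedness proof for $\hat\eta$ one again needs the MOY isomorphisms to be built from zips, unzips, births and deaths so that they visibly commute with the saddle map. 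So your proof establishes the lemma as stated but would not serve as a drop-in replacement for the paper's, since the later arguments lean on the explicit form of the isomorphisms rather than merely their existence. One small caveat in your write-up: the last paragraph about the decomposition maps commuting with the $\Psi_\sigma$ up to homotopy is more than you need --- once you fix one representative identification, any honest homomorphism between representatives already determines a morphism in $\wfoam$ by the definition of the equivalence relation $\sim$, so well-definedness of the decomposition on equivalence classes is automatic from Corollary~\ref{cor:caniso}.
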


\noindent Although Lemma~\ref{lem:KhK-mf} follows from Lemma~\ref{lem:KR-moy} and  
Lemma~\ref{lem:fatedge}, an explicit proof will be useful in the sequel.
\begin{proof}
\emph{Disjoint Union} is a direct consequence of the definitions. To prove
\emph{Digon Removal} define the grading-preserving homomorphisms
$$
\alpha\colon\figins{-9.5}{0.4}{arc-mf}\{-1\}   
\to
\figins{-9.5}{0.4}{digon-mf}
\qquad\qquad
\beta\colon\figins{-9.5}{0.4}{digon-mf}   
\to
\figins{-9.5}{0.4}{arc-mf}\{1\}
$$
by Figure~\ref{fig:dr-mf}.
\begin{figure}[h!]
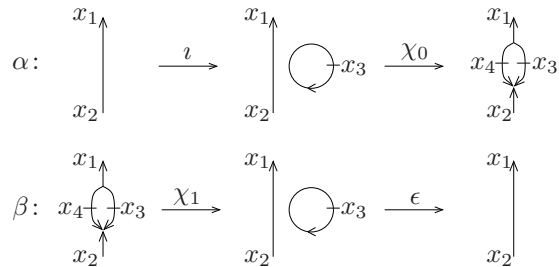

\labellist
\small\hair 2pt
\pinlabel $\alpha\colon$ at -58 188
\pinlabel $x_1$ at -4 234
\pinlabel $x_2$ at -4 140
\pinlabel $\imath$ at 94 198
\pinlabel $x_1$ at 162 234
\pinlabel $x_2$ at 162 140
\pinlabel -- at 235 184
\pinlabel $x_3$ at 255 184
\pinlabel $x_1$ at 396 234
\pinlabel $x_2$ at 396 140
\pinlabel -- at 398 186
\pinlabel $x_4$ at 380 186
\pinlabel -- at 419 186
\pinlabel $x_3$ at 440 186
\pinlabel $\chi_0$ at 314 199
\pinlabel $\beta\colon$ at -58 48
\pinlabel $x_1$ at -4 94
\pinlabel $x_2$ at -4 0
\pinlabel -- at 0 46
\pinlabel $x_4$ at -18 46
\pinlabel -- at 23 46
\pinlabel $x_3$ at 42 46
\pinlabel $\chi_1$ at 94 59
\pinlabel $x_1$ at 162 94
\pinlabel $x_2$ at 162 0
\pinlabel -- at 235 46
\pinlabel $x_3$ at 255 46
\pinlabel $x_1$ at 396 94
\pinlabel $x_2$ at 396 0
\pinlabel $\epsilon$ at 314 58
\endlabellist
\centering
\figs{0.4}{dr-alphabeta}
\caption{Homomorphisms $\alpha$ and $\beta$}
\label{fig:dr-mf}
\end{figure}

\noindent If we choose to create the circle on the other side of the arc in 
$\alpha$ we obtain a homomorphism homotopic to $\alpha$ and the same holds for 
$\beta$. Define the homomorphisms 
$$
\alpha_0\colon
\figins{-9.5}{0.4}{arc-mf}\{-1\}   
\to
\figins{-9.5}{0.4}{digon-mf}
\qquad\qquad
\alpha_1\colon
\figins{-9.5}{0.4}{arc-mf}\{1\}   
\to
\figins{-9.5}{0.4}{digon-mf}
$$
by $\alpha_0=2\alpha$ and $\alpha_1=2\alpha\circ m(-x_2)$. Note that 
the homomorphism $\alpha_1$ is homotopic to the homomorphism 
$2\alpha\circ m(x_1+x_3-a)$.
Similarly define
$$
\beta_0\colon\figins{-9.5}{0.4}{digon-mf}   
\to
\figins{-9.5}{0.4}{arc-mf}\{-1\}
\qquad\qquad
\beta_1\colon\figins{-9.5}{0.4}{digon-mf}   
\to
\figins{-9.5}{0.4}{arc-mf}\{1\}
$$
by $\beta_0=-\beta\circ m(x_3)$ and $\beta_1=-\beta$. A simple calculation shows that $\beta_j\alpha_i=\delta_{ij}\id(\hatarc$).
Since the cohomologies of the factorizations $\hatdigon$ and 
$\hatarc\{-1\}\oplus\hatarc\{1\}$ have the 
same graded dimension (see \cite{KR}) we have that
 $\alpha_0+\alpha_1$ and $\beta_0+\beta_1$ are homotopy inverses of each other and 
that $\alpha_0\beta_0+\alpha_1\beta_1$ is homotopic to the identity in 
$\End(\hatdigon)$.
To prove (\emph{Square Removal}) define grading preserving homomorphisms
$$
\xymatrix@R=3mm{
\psi_0\colon\figins{-9.5}{0.4}{square1-mf}\lra
\figins{-9.5}{0.405}{sqr1a},
&
\psi_1\colon\figins{-9.5}{0.4}{square1-mf}\lra
\figins{-9.5}{0.4}{sqr1b},
\\
\varphi_0\colon\figins{-9.5}{0.4}{sqr1a}\lra
\figins{-9.5}{0.405}{square1-mf},
&
\varphi_1\colon\figins{-9.5}{0.4}{sqr1b}\lra
\figins{-9.5}{0.405}{square1-mf},
}
$$
by the composed homomorphisms below
$$
\xymatrix@C=12mm{
\figins{0}{0.4}{square1-mf}
\ar[r]^{\chi_1\chi_1'} 
\ar@<10pt>@/^1.6pc/[rr]^{\psi_0} &
\figins{0}{0.4}{sqr1acirc}
\ar[r]^{-\varepsilon}
\ar@<6pt>[l]^{\chi_0\chi_0'} &
\figins{0}{0.405}{sqr1a}
\ar@<6pt>[l]^{\imath}
\ar@<10pt>@/^1.6pc/[ll]^{\varphi_0} 
\\
}
\qquad\qquad
\xymatrix@C=12mm{
\figins{0}{0.4}{square1-mf}
\ar[r]^{\overline{\chi}_1\overline{\chi}_1'} 
\ar@<10pt>@/^1.6pc/[rr]^{\psi_1} &
\figins{0}{0.4}{sqr1bcirc}
\ar[r]^{-\varepsilon}
\ar@<6pt>[l]^{\overline{\chi}_0\overline{\chi}_0'} &
\figins{0}{0.405}{sqr1b}
\ar@<6pt>[l]^{\imath}
\ar@<10pt>@/^1.6pc/[ll]^{\varphi_1} 
\\
}.$$
We have that $\psi_0\varphi_0=\id(\hattwoedgesop)$ and 
$\psi_1\varphi_1=\id(\hathtwoedgesop)$. We also have 
$\psi_1\varphi_0=\psi_0\varphi_1=0$ because 
$\Ext(\hattwoedgesop,\hathtwoedgesop)\cong\cH_{a,b,c}(\unknot)\{4\}$ which is zero in 
$q$-degree zero and so any homomorphism of degree zero between $\hattwoedgesop$ and 
$\hathtwoedgesop$ is homotopic to the zero homomorphism. Since the cohomologies of 
$\hatsquare$ and $\hattwoedgesop\oplus\hathtwoedgesop$ have the same graded dimension 
(see \cite{KR}) we have that $\psi_0+\psi_1$ and $\varphi_0+\varphi_1$ are homotopy 
inverses of each 
other and that $\varphi_0\psi_0+\varphi_1\psi_1$ is homotopic to the identity in 
$\End(\hatsquare)$.
\end{proof}

\section{The equivalence functor}\label{sec:iso}

We first define a functor 
$$\widehat{}\,\,\colon \foam\to \widehat{\foam}.$$ 
On objects the functor is defined by $$\Gamma\to\hat\Gamma,$$
as explained in the previous section. 
Let $f\in \Hom_{\foam}(\Gamma,\Gamma')$. Suffice it to consider the case in which 
$f$ can be given by one singular cobordism, also denoted $f$. 
If $f$ is given by a linear combination of 
singular cobordisms, one can simply extend the following arguments to all terms. 
Slice $f$ up between critical 
points, so that each slice contains one elementary foam, i.e. a zip or unzip, 
a saddle-point cobordism, or a cap or a cup, 
glued horizontally to the identity foam on the rest of the source and target webs. 
For each slice choose compatible 
vertex identifications on the source and target webs, such that in the region where 
both webs are isotopic the vertex 
identifications are the same and in the region where they differ the 
vertex identifications are such that we can apply 
the homomorphism of matrix factorizations $\chi_0,\chi_1,\eta, \iota$ or $\epsilon$. 
This way we get a homomorphism of matrix factorizations for each slice. We can take 
its \,\,$\widehat{}$\,\, equivalence class. Composing all these morphisms gives a 
morphism $\hat{f}$ between $\hat\Gamma_v$ and 
$\hat\Gamma'_v$. For its definition 
we had to choose a representative singular cobordism of the foam $f$, a way to slice 
it up and complete vertex identifications for the source and target of each 
slice. Of course we have to show that  
$\hat{f}\in\Hom_{\widehat{\foam}}(\hat\Gamma,\hat\Gamma')$ is independent of those 
choices. But before we do that we have to fullfil a promise that we made at the end 
of Section~\ref{sec:KR-3} after we defined the homomorphism of matrix factorizations 
$\eta$ induced by a saddle-point cobordism. 

\begin{lem}
The map $\hat\eta$ is well defined for closed webs. 
\end{lem}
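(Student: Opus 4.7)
The plan is to exploit the locality of the saddle cobordism: since the saddle is supported in a small disk that contains no trivalent vertices, the matrix factorization of the ambient closed web splits as a tensor product of a ``saddle part'' (involving only the four arc variables $x_1,\dots,x_4$) and a ``complementary part'' (which carries all of the vertex identifications). The homomorphism $\eta$ acts only on the first factor, while the vertex-identification isomorphisms from Corollary~\ref{cor:caniso} act only on the second, so the two operations commute formally.

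Concretely, write the closed source web as $\Gamma=\twoedgesop\cup\Gamma_0$, where $\Gamma_0$ is an open web with four free ends labelled $x_1,\dots,x_4$ that carries all vertices of $\Gamma$; then $\Gamma'=\htwoedgesop\cup\Gamma_0$. For any complete vertex identification $\zeta$ of $\Gamma_0$, I would write
\[
\zeta(\hat\Gamma_v)\;\cong\;\hat{\twoedgesop}\otimes_{R_0}\zeta(\hat\Gamma_0),
\qquad
\zeta(\hat{\Gamma'_v})\;\cong\;\hat{\htwoedgesop}\otimes_{R_0}\zeta(\hat\Gamma_0),
\]
with $R_0=\bQ[a,b,c,x_1,x_2,x_3,x_4]$, and set $\eta_\zeta:=\eta\otimes\mathrm{id}_{\zeta(\hat\Gamma_0)}$ using the explicit matrices $(\eta_0,\eta_1)$ listed in Subsection~\ref{ssec:cob-mf}. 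Because those matrices are written down explicitly, once $\zeta$ is fixed there is no residual sign ambiguity.

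To show that the resulting class $\hat\eta$ is independent of $\zeta$, let $\zeta'$ be any other complete vertex identification, and let $\Psi_{\zeta\to\zeta'}$ be the canonical isomorphism produced by Corollary~\ref{cor:caniso}. I would argue that $\Psi_{\zeta\to\zeta'}$ factors as
\[
\mathrm{id}_{\hat{\twoedgesop}}\otimes\Psi^{\Gamma_0}_{\zeta\to\zeta'}\;\colon\;
\hat{\twoedgesop}\otimes\zeta(\hat\Gamma_0)\;\lra\;
\hat{\twoedgesop}\otimes\zeta'(\hat\Gamma_0)\brak{k},
\]
and similarly for $\Gamma'$. This is because the elementary swap isomorphisms of Lemma~\ref{lem:swapedges} (and hence any $\Psi_\sigma$ built from them as in Lemma~\ref{lem:repsym}) only perform row operations on Koszul rows coming from pairs of identified vertices in $\Gamma_0$, and never touch the Koszul rows that define $\hat{\twoedgesop}$ or $\hat{\htwoedgesop}$. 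Commutativity of the square
\[
\xymatrix@C=14mm{
\zeta(\hat\Gamma_v)\ar[r]^{\eta_\zeta}\ar[d]_{\Psi_{\zeta\to\zeta'}} &
\zeta(\hat{\Gamma'_v})\brak{1}\ar[d]^{\Psi_{\zeta\to\zeta'}}\\
\zeta'(\hat\Gamma_v)\brak{k}\ar[r]_{\eta_{\zeta'}} &
\zeta'(\hat{\Gamma'_v})\brak{k+1}
}
\]
is then immediate from the functoriality of tensor products: $\eta\otimes\mathrm{id}$ and $\mathrm{id}\otimes\Psi^{\Gamma_0}_{\zeta\to\zeta'}$ always commute on a tensor product.

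The main technical point to verify is the clean tensor decomposition claimed in the previous paragraph, namely that the row operations and variable eliminations used to build $\Psi_{\zeta\to\zeta'}$ in the proof of Lemma~\ref{lem:swapedges} genuinely leave the saddle rows untouched. Once one checks that the linear terms being eliminated (of the form $g_3=4(v_1-a)$ coming from vertex identifications) live in different Koszul rows from the linear terms $x_i-x_j$ and polynomial entries $\pi_{ij}$ defining the saddle factor, the factorization $\Psi_{\zeta\to\zeta'}=\mathrm{id}_{\hat{\twoedgesop}}\otimes\Psi^{\Gamma_0}_{\zeta\to\zeta'}$ is manifest, and the well-definedness of $\hat\eta$ follows.
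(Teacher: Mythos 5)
Your proposal addresses the wrong source of ambiguity. The promise made at the end of Subsection~\ref{ssec:cob-mf} is that the \emph{sign} ambiguity in $\eta$---which is intrinsic for open KR-webs, as observed by Khovanov and Rozansky---disappears for closed webs; that is what this lemma asserts. Your argument never touches it. The crucial sentence ``Because those matrices are written down explicitly, once $\zeta$ is fixed there is no residual sign ambiguity'' begs the question: the explicit formulas for $(\eta_0,\eta_1)$ presuppose a labelling of the four arcs near the saddle, and an equally admissible relabelling changes $\eta$ by a sign. That ambiguity is independent of $\zeta$ and is not resolved by a tensor factorization $\hat{\twoedgesop}\otimes\zeta(\hat\Gamma_0)$. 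The paper's proof is designed precisely to handle it: in the base case $v=0$, swapping the two unmarked circles produces a minus sign in the identification of matrix factorizations which cancels the corresponding minus sign in $\eta$; for $v>0$, an Euler-characteristic count shows there is always a digon or square face disjoint from the saddle region, so one can apply the MOY isomorphisms of Lemma~\ref{lem:KhK-mf} (all built from $\chi_0$, $\chi_1$, $\imath$, $\varepsilon$, which commute with $\hat\eta$) to reduce to a web with fewer vertices and conclude by induction. None of this is replaced by your argument; the $\zeta$-independence you establish is already handled by the equivalence relation $\sim$ and Corollary~\ref{cor:caniso}.

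A secondary concern: even the $\zeta$-independence you prove requires more care than ``the two operations commute formally.'' Both $\eta$ and the elementary swap $\psi$ from Lemma~\ref{lem:swapedges} have $\bZ/2\bZ$-degree $1$, so with the Koszul convention $(f\otimes g)\ket{v\otimes w}=(-1)^{|g||v|}\ket{fv\otimes gw}$ the composites $(\eta\otimes\mathrm{id})\circ(\mathrm{id}\otimes\Psi)$ and $(\mathrm{id}\otimes\Psi)\circ(\eta\otimes\mathrm{id})$ differ a priori by a sign $(-1)^{|\eta||\Psi|}$; one would need to verify that this is absorbed by the $\brak{1}$ shifts in your diagram rather than asserting commutativity outright. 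That, however, is a side issue compared to the main gap.
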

\begin{proof}
Let $\Gamma$ and $\Gamma'$ be two closed webs and $\Sigma\colon \Gamma\to\Gamma'$ a 
cobordism which is the identity everywhere except for one saddle-point. By a slight 
abuse of notation, let $\hat\eta$ denote the homomorphism of matrix factorizations 
which corresponds to $\Sigma$. Note that $\Gamma$ and $\Gamma'$ have the same number 
of 
vertices, which we denote by $v$. Our proof that $\hat\eta$ is well-defined proceeds 
by induction on $v$. If $v=0$, then the lemma holds, because $\Gamma$ consists of 
one circle and $\Gamma'$ of two circles, or vice-versa. These circles have no marks 
and are therefore indistinguishable. To each circle we associate the 
complex $\hatunknot$ and $\hat\eta$ corresponds to the product or the coproduct in 
$\bQ[a,b,c][X]/{X^3-aX^2-bX-c}$. Note that as soon as we mark the two circles, they 
will become distinguishable, and a minus-sign creeps in when we switch them. 
However, this minus-sign then cancels against the minus-sign 
showing up in the homomorphism associated to the saddle-point cobordism.  

Let $v>0$. This part of our proof uses some ideas from the proof of Theorem 2.4 in 
\cite{JK}. Any web can be seen as lying on a 2-sphere. Let $V,E$ and $F$ denote the 
number of vertices, edges and faces of a web, where a face is a connected component 
of the complement of the web in the 2-sphere. Let $F=\sum_iF_i$, where 
$F_i$ is the number of faces with $i$ edges. Note that we only have faces with an 
even number of edges. It is easy to see that the following 
equations hold:
\begin{eqnarray*}
3V&=&2E\\
V-E+F&=&2\\
2E&=&\sum_iiF_i.
\end{eqnarray*}    
Therefore, we get
$$6=3F-E=2F_2+F_4-F_8-\ldots,$$
which implies 
$$6\leq 2F_2+F_4.$$
This lower bound holds for any web, in particular for $\Gamma$ and $\Gamma'$. 
Therefore we see that there is always a digon or a square in $\Gamma$ and $\Gamma'$ 
on which $\hat\eta$ acts as the identity, i.e. which does not get changed by the 
saddle-point in the cobordism to which $\hat\eta$ corresponds. Since the MOY-moves 
in Lemma~\ref{lem:KhK-mf} are all given by isomorphisms which correspond to the 
zip and the 
unzip and the birth and the death of a circle (see the proof of 
Lemma~\ref{lem:KhK-mf}), this shows that there is 
always a set of MOY-moves which can be applied both to $\Gamma$ and $\Gamma'$ 
whose target webs, say $\Gamma_1$ and $\Gamma'_1$, have less 
than $v$ vertices, and which commute with $\hat\eta$. Here we denote the homomorphism 
of matrix factorizations corresponding to the saddle-point cobordism between 
$\Gamma_1$ and $\Gamma'_1$ by $\hat{\eta}$ again. By induction, 
$\hat\eta\colon\hat{\Gamma}_1\to \hat{\Gamma'}_1$ is well-defined. 
Since the MOY-moves commute with $\hat\eta$, we conclude that 
$\hat\eta\colon\hat\Gamma\to\hat\Gamma'$ is well-defined.  
\end{proof}

\begin{lem}\label{lem:widehat} The functor\,\, $\widehat{}$\,\, is well-defined.  
\end{lem}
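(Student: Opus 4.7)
The plan is to verify that $\hat f$ is independent of every choice made in its construction and that the defining relations of $\foam$ are sent to zero in $\wfoam$. I would organize this in three stages: independence from the vertex identifications on each slice, independence from the slicing, and respect for the foam relations.

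First I would handle the vertex identifications. For a single elementary slice, the source and target webs $\Gamma_s,\Gamma_t$ agree outside a small disk, and one can always arrange two compatible complete vertex identifications $\zeta,\zeta'$ to coincide on the common subweb. The canonical isomorphisms $\zeta(\hat\Gamma_s)\cong\zeta'(\hat\Gamma_s)$ and $\zeta(\hat\Gamma_t)\cong\zeta'(\hat\Gamma_t)$ supplied by Proposition~\ref{prop:ident-vert} and Corollary~\ref{cor:caniso} then act as the identity outside the elementary piece, so the square formed with the chosen elementary homomorphism commutes, and the resulting equivalence class in $\Hom_{\wfoam}(\hat\Gamma_s,\hat\Gamma_t)$ is insensitive to the identifications picked at the boundaries of consecutive slices.

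Next I would address the slicing. Two slicings of the same singular cobordism differ by a finite sequence of local movie moves: swapping the heights of elementary foams supported in disjoint disks, and Morse-theoretic cancellations. Commutativity in the first case is built into the tensor-product structure of matrix factorizations. For the second case one checks a short list of identities directly from the formulas of Section~\ref{sec:KR-3}: zip followed by unzip on the same thick edge reduces, up to homotopy, to the identity on the digon via the Digon Removal decomposition of Lemma~\ref{lem:KhK-mf}; cancelling saddle pairs give homotopic composites of $\eta$'s; and cup/cap cancellation is $\varepsilon\circ\imath=\id$.

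Finally, one must show that $\widehat{}$ sends the relations $(3D),(CN),(S),(\Theta)$ and the closure relation to $0$ in $\wfoam$. By the closure relation in $\foam$ it suffices to check that a closed foam $f$ evaluating to $\lambda\in\bQ[a,b,c]$ in $\foam$ satisfies $\hat f=\lambda\cdot\id$ in $\End_{\wfoam}(\hat\emptyset)$. Relation $(S)$ is built into the definition of $\varepsilon$ on $\hatunknot$, and $(3D)$ is the relation $X^3=aX^2+bX+c$ inside $\End(\hatunknot)\cong\bQ[a,b,c][X]/(X^3-aX^2-bX-c)$. Relation $(\Theta)$ reduces to a composite pairing of the form $\varepsilon\circ m(X^\alpha)\circ\chi_1\circ m(X^\beta)\circ\chi_0\circ m(X^\delta)\circ\imath$, while $(CN)$ follows from writing the cylinder as $\imath\circ\varepsilon$ with respect to the $\varepsilon$-dual basis $\{1,X,X^2\}$ of $\hatunknot$. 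The hard part will be this third stage, and within it the $(\Theta)$ and $(CN)$ checks: these are precisely what forced the doubled $\chi_0$ and the specific $\varepsilon$ of Section~\ref{sec:KR-3}, and matching the $\pm 1/8$ in $(\Theta)$ and the coefficient $4$ in $(CN)$ against the Koszul formulas for $\chi_0,\chi_1,\imath,\varepsilon$ is delicate but mechanical. Once all three stages are complete, functoriality of $\widehat{}$ follows because composition in $\wfoam$ is defined on representatives and our slice-by-slice construction is compatible with composition of cobordisms.
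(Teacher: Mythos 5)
Your first stage matches the paper: independence from vertex identifications follows directly from Corollary~\ref{cor:caniso} and the equivalence relation on $\Hom$-spaces, and your observation about arranging $\zeta,\zeta'$ to agree on the common subweb is the right local picture.

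Your second stage, however, rests on a claim the paper deliberately avoids: that two slicings of the same singular cobordism differ by a finite list of local movie moves. For ordinary surfaces in four-space this is Carter--Saito, but for foams (surfaces with singular circles and vertices) no such complete list is established in the literature this paper can cite, and proving one would be a substantial project in its own right. The paper sidesteps this entirely by an algebraic reduction: since $\hat\Gamma$ decomposes via Lemma~\ref{lem:KhK-mf} into copies of $\hatunknot$, and the decomposition isomorphisms are themselves induced by sliced cobordisms, $\widehat{\Ext}(\emptyset,\Gamma)$ and $\widehat{\Ext}(\Gamma,\emptyset)$ are \emph{generated} by maps $\hat u$ coming from sliced cobordisms. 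Hence if two slicings of $f$ gave different maps, pre- and post-composing with suitable $\hat u,\hat v$ would produce a \emph{closed} singular cobordism with two slicings evaluating differently. This reduces the whole problem to closed foams, where one cuts near the singular circles using (CN), leaving only spheres and theta-foams to check (plus one singular Morse move, Figure~\ref{fig:SingMorse}, verified by the explicit computation $\Phi=\varepsilon(4(x_2-x_3)(x_1-x_2))=1$). That reduction is the genuine content of the proof, and it is missing from your outline. Your stage three then becomes the endgame of the paper's stage two rather than a separate step: the closure relation in $\foam$ is exactly what turns ``well-defined on closed foams'' into ``well-defined on foams.''

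One small factual slip: in your Morse-cancellation discussion, $\varepsilon\circ\imath$ is not the identity here; with the normalizations of Section~\ref{ssec:cob-mf}, $\varepsilon(\imath(1))=\varepsilon(1)=0$. The relevant cup/cap cancellations are the zig-zag identities of the Frobenius structure on $\bQ[a,b,c][X]/(X^3-aX^2-bX-c)$, not the composite $\varepsilon\imath$. This is minor, but it illustrates why the paper prefers to push everything to closed diagrams and evaluate there rather than enumerate local moves.
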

\begin{proof} The fact that $\hat{f}$ does not depend on the vertex identifications 
follows immediately from Corollary~\ref{cor:caniso} and the equivalence 
relation $\sim$ on the Hom-spaces in $\widehat{\foam}$.

Next we prove that $\hat{f}$ does not depend on the way we have sliced it up. 
By Lemma~\ref{lem:KhK-mf} we know that, for any closed web $\Gamma$, 
the class $\hat\Gamma$ is homotopy equivalent to a direct sum of terms of the form 
$\hatunknot^k$. Note that $\widehat{\Ext}(\emptyset,\unknot)$ is generated by 
$X^s\iota$, for 
$0\leq s\leq 2$, and that all maps in the proof of Lemma~\ref{lem:KhK-mf} 
are induced by cobordisms with a particular slicing. 
This shows that $\widehat{\Ext}(\emptyset,\Gamma)$ 
is generated by maps of the form $\hat{u}$, where $u$ is a cobordism between 
$\emptyset$ and $\Gamma$ with a particular slicing. A similar result holds for 
$\widehat{\Ext}(\Gamma,\emptyset)$. Now let $f$ and $f'$ be given by the 
same cobordism between $\Gamma$ and $\Lambda$ but with different slicings. If 
$\hat{f}\ne\hat{f'}$, then, by the previous arguments, there exist maps  
$\hat{u}$ and $\hat{v}$, where $u\colon \emptyset\to \Gamma$ and 
$v\colon\Lambda\to\emptyset$ are cobordisms with particular slicings, 
such that $\widehat{vfu}\ne\widehat{vf'u}$. 
This reduces the question of independence of 
slicing to the case of closed cobordisms. Note that we already know that 
\,\,$\widehat{}$\,\, is well-defined on the parts that do not involve 
singular circles, because it is the generalization of a 2d TQFT. 
It is therefore easy to see that \,\,$\widehat{}$\,\, respects the 
relation (CN). Thus we can cut up any closed singular cobordism near the 
singular circles to obtain a linear combination of closed singular 
cobordisms isotopic to spheres and theta-foams. The spheres do not have 
singular circles, 
so \,\,$\widehat{}$\,\, is well-defined on them and it is easy to check 
that it respects the relation (S). 

Finally, for theta-foams we do have to 
check something. There is one basic Morse move that can be applied to 
one of the discs of a theta-foam, which we show in 
Figure~\ref{fig:SingMorse}. We have to show that \,\,$\widehat{}$\,\, 
is invariant under this Morse move. 

\begin{figure}[ht!]
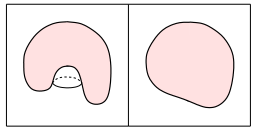

\centering
\figins{0}{0.65}{mdisk}
\caption{Singular Morse move}
\label{fig:SingMorse}
\end{figure}

\begin{figure}[ht!]
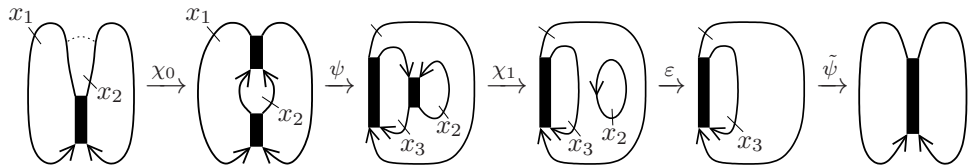

\centering
\labellist
\small\hair 2pt
\pinlabel $x_1$ at  -2 80
\pinlabel $x_2$ at  45 38
\pinlabel $x_1$ at  92 80
\pinlabel $x_2$ at 141 26
\pinlabel $x_2$ at 225 20
\pinlabel $x_3$ at 206 10
\pinlabel $x_2$ at 313 16
\pinlabel $x_3$ at 295 10
\pinlabel $x_3$ at 385 12
\endlabellist
$$
\figins{-28}{0.77}{morse1}\xra{\chi_0}
\figins{-28}{0.77}{morse2}\xra{\psi}
\figins{-28}{0.77}{morse3}\xra{\chi_1}
\figins{-28}{0.77}{morse4}\xra{\varepsilon}
\figins{-28}{0.77}{morse5}\xra{\tilde{\psi}}
\figins{-28}{0.77}{morse0}
$$
\caption{Homomorphism $\Phi$. To avoid cluttering only some marks are shown}
\label{fig:smorse}
\end{figure}
\n In other words, we have to show that the composite homomorphism in 
Figure~\ref{fig:smorse} is homotopic to the identity. It suffices to do 
the computation on the homology. First we note that the theta web has 
homology only in $\bZ/2\bZ$-degree $0$. From the remark at the end of  
Subsection~\ref{ssec:diffs} it follows that $\chi_0$ is equivalent to  
multiplication by $-2(x_1-x_2)$ and $\chi_1$ to multiplication by 
$x_2-x_3$, where we used the fact that $\psi$ has $\bZ/2\bZ$-degree $1$. 
From Corollary~\ref{cor:swap} we have that $\psi$ is equivalent to 
multiplication by $-2$ and from the definition of vertex identification it 
is immediate that $\tilde{\psi}$ is the identity. Therefore we have that
$$\Phi=\varepsilon\left(4(x_2-x_3)(x_1-x_2)\right)=1.$$
It is also easy to check that \,\,$\widehat{}$\,\, respects the relation 
($\Theta$). 

Note that the arguments above also show that, for an open foam $f$, 
we have $\hat{f}=0$ if $u_1fu_2=0$ for all singular cobordisms 
$u_1\colon \emptyset\to \Gamma_v$ and $u_2\colon\Gamma'_v\to\emptyset$. 
This proves that \,\,$\widehat{}$\,\, is well-defined on foams, which are 
equivalence classes of singular cobordisms. 
\end{proof}

\begin{cor}\label{cor:widehat}
\,\,$\widehat{}$\,\, is an isomorphism of categories.  
\end{cor}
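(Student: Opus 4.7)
The plan is to verify that $\widehat{}$ is a bijection on objects and on every Hom-space. Bijectivity on objects is tautological: objects of $\widehat{\foam}$ are by construction indexed by webs via $\Gamma\mapsto\hat\Gamma$.

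For fullness, I would use the matching MOY decompositions on the two sides. Both $\foam$ (Lemma~\ref{lem:KhK}) and $\widehat{\foam}$ (Lemma~\ref{lem:KhK-mf}) satisfy Disjoint Union, Digon Removal and Square Removal with identical grading shifts, so any web can be reduced in either category to a direct sum of copies of $\hatunknot^{\otimes k}$. The decisive observation is that the decomposition isomorphisms in $\widehat{\foam}$ are constructed in the proof of Lemma~\ref{lem:KhK-mf} out of the elementary homomorphisms $\chi_0,\chi_1,\iota,\varepsilon$, which are precisely the images under $\widehat{}$ of the zip, unzip, cup and cap foams. Hence the two sets of decomposition isomorphisms correspond under $\widehat{}$, and the problem is reduced to computing Hom-spaces between disjoint unions of circles. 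On such objects both categories carry the same commutative Frobenius-algebra structure on $\bQ[a,b,c][X]/(X^3-aX^2-bX-c)$, so their Hom-spaces are spanned by the images of elementary 2D cobordisms (cups, caps, pair-of-pants), all of which lie in the image of $\widehat{}$. This gives fullness.

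For faithfulness, suppose $f\in\Hom_{\foam}(\Gamma,\Gamma')$ satisfies $\hat{f}=0$. By the closure relation in $\foam$, it suffices to show that for every cap $u_1\colon\emptyset\to\Gamma$ and cup $u_2\colon\Gamma'\to\emptyset$ the closed foam $u_2\circ f\circ u_1$ has zero evaluation. Since $\widehat{}$ is a well-defined functor by Lemma~\ref{lem:widehat}, we have $\widehat{u_2\circ f\circ u_1}=\hat{u_2}\hat{f}\hat{u_1}=0$ in $\Hom_{\widehat{\foam}}(\hat\emptyset,\hat\emptyset)\cong\bQ[a,b,c]$. The proof of Lemma~\ref{lem:widehat} already checked that $\widehat{}$ is compatible with the closed-foam evaluation (relations (S), ($\Theta$) and (CN) all match). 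Therefore each such closure evaluates to $0$, and the closure relation forces $f=0$ in $\foam$.

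The main obstacle is verifying that the reduction to circles really is compatible on both sides, i.e. that the decomposition isomorphisms in $\widehat{\foam}$ lie in the image of $\widehat{}$. This has already been taken care of by constructing the isomorphisms of Lemma~\ref{lem:KhK-mf} out of exactly the matrix-factorization maps that correspond to the elementary foams; the only other subtlety, the matching of normalizations for the trace map and for $\chi_0$, was built into Sections~\ref{sec:foam} and~\ref{sec:KR-3} and accounts for the rescalings recorded there.
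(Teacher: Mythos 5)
Your proof is correct and takes essentially the same route as the paper's own (very terse) proof, which simply cites Lemma~\ref{lem:KhK-mf} and the proof of Lemma~\ref{lem:widehat}: you have unpacked that citation into a fullness argument (reduce both sides to disjoint circles via the matching MOY decompositions, whose isomorphisms lie in the image of $\widehat{}$, then compare the two Frobenius-algebra structures) and a faithfulness argument (if $\hat{f}=0$, functoriality plus compatibility of $\widehat{}$ with the closed-foam evaluation makes every closure of $f$ evaluate to zero, so the closure relation forces $f=0$), exactly the content implicit in the paper.
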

\begin{proof} On objects \,\,$\widehat{}$\,\, is clearly a bijection. On morphisms it 
is also a bijection by Lemma~\ref{lem:KhK-mf} and the proof of Lemma~\ref{lem:widehat}.
\end{proof}

\begin{thm}
The projective functors $U_{a,b,c}$ and $\widehat{\HKR}_{a,b,c}$ from $\Link$ to 
$\Modbg$ are naturally isomorphic. 
\end{thm}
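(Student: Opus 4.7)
The plan is to promote the categorical isomorphism $\widehat{}\,\colon\foam\to\widehat{\foam}$ of Corollary~\ref{cor:widehat} to chain complexes, check that it identifies the two link-invariant complexes, and then combine the resulting natural isomorphism with the natural projective isomorphism between $\widehat{\HKR}_{a,b,c}$ and $\HKR_{a,b,c}$ already established just before Lemma~\ref{lem:KhK-mf}.

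First I would observe that, being an additive isomorphism of categories, $\widehat{}\,$ extends to an isomorphism of chain-complex categories $\kom(\foam)\to\kom(\widehat{\foam})$. For any link diagram $D$, the foam cochain complex defining $U_{a,b,c}(D)$ has as its $i$-th group a direct sum of $q$-shifts of webs coming from the flat resolutions of $D$, with differential built crossing-by-crossing from the zip and unzip foams of Figure~\ref{fig:elemfoams}. From the definition of $\widehat{}\,$ on zips and unzips given in the proof of Lemma~\ref{lem:widehat}, applying it termwise produces exactly the Khovanov--Rozansky cube complex whose cohomology is $\widehat{\HKR}_{a,b,c}(D)$: the chain groups are the corresponding $\hat\Gamma$'s and the differentials are built from $\chi_0$ and $\chi_1$. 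Passing to cohomology yields, for every link $L$, a natural isomorphism $U_{a,b,c}(L)\cong\widehat{\HKR}_{a,b,c}(L)$ of bigraded $\bQ[a,b,c]$-modules.

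Second, for a link cobordism $u\colon L\to L'$ one checks that the chain maps defining $U_{a,b,c}(u)$ and $\widehat{\HKR}_{a,b,c}(u)$ correspond under $\widehat{}\,$ up to an overall $\bQ$-scalar. Choose a movie presentation of $u$ and slice it between critical points so that each slice is the identity away from a single elementary foam (zip, unzip, cup, cap or saddle). By construction $\widehat{}\,$ sends each such elementary foam to the corresponding matrix factorization homomorphism $\chi_0$, $\chi_1$, $\iota$, $\varepsilon$ or $\eta$ (see the beginning of Section~\ref{sec:iso}), so the two cobordism chain maps are identified by $\widehat{}\,$ termwise. Taking cohomology and then composing with the natural projective isomorphism $\widehat{\HKR}_{a,b,c}\cong\HKR_{a,b,c}$ already in hand delivers the desired natural isomorphism of projective functors.

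The main obstacle is verifying that $\widehat{}\,$ really matches the two link-invariant data on the nose rather than only up to homotopy of individual terms, and in particular that the sign ambiguity of $\eta$ for open webs does not obstruct the construction; but this is precisely what the vertex-identification framework of Section~\ref{sec:sl3-mf}, the normalization choices in Sections~\ref{sec:foam} and~\ref{sec:KR-3}, and the verifications of Lemma~\ref{lem:widehat} were arranged to handle. Once this bookkeeping is in place, the theorem reduces essentially to passing Corollary~\ref{cor:widehat} through the $\kom$ and cohomology functors.
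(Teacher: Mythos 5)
Your proof is correct and follows essentially the same approach as the paper: extend $\widehat{}\,$ from Corollary~\ref{cor:widehat} to chain complexes, observe that it matches chain groups and differentials (zips/unzips to $\chi_0/\chi_1$), and obtain naturality from the correspondence between elementary link cobordisms and elementary foams. One small point: the closing composition with $\widehat{\HKR}_{a,b,c}\cong\HKR_{a,b,c}$ is superfluous, since the theorem as stated only asserts $U_{a,b,c}\cong\widehat{\HKR}_{a,b,c}$; that extra composition would give the further (and also useful) conclusion $U_{a,b,c}\cong\HKR_{a,b,c}$.
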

\begin{proof}
Let $D$ be a diagram of $L$, $C_{\foam}(D)$ the complex for $D$ constructed with 
foams in Section~\ref{sec:foam} and $\widehat{KR}_{a,b,c}(D)$ the complex constructed with 
equivalence classes of matrix factorizations in Section~\ref{sec:sl3-mf}.
From Lemma~\ref{lem:widehat} and Corollary~\ref{cor:widehat} it follows that for 
all $i$ we have isomorphisms of graded $\bQ[a,b,c]$-modules 
$C^i_{\foam}(D)\cong \widehat{KR}^i_{a,b,c}(D)$ where $i$ is 
the homological degree. By a slight abuse of notation we denote these isomorphisms 
by $\,\,\widehat\,\,$ too. The differentials in $\widehat{KR}_{a,b,c}(D)$ are 
induced by $\chi_0$ and $\chi_1$, which are exactly the maps that we associated to 
the zip and the unzip. This shows that $\,\,\widehat{}\,\,$ commutes with the 
differentials in both complexes and therefore that it defines an isomorphism of 
complexes.  

The naturality of the isomorphism between the two functors follows from 
Corollary~\ref{cor:widehat} and the fact that all elementary link cobordisms 
are induced by the elementary foams and their respective images w.r.t.  
$\,\,\widehat\,\,$. 
\end{proof}


\vspace*{1cm}

\noindent {\bf Acknowledgements} The authors thank Mikhail Khovanov for interesting 
conversations and enlightening exchanges of e-mail about the topic of this paper.

Both authors were supported by the 
Funda\c {c}\~{a}o para a Ci\^{e}ncia e a Tecnologia through the
programme ``Programa Operacional Ci\^{e}ncia, Tecnologia, Inova\c
{c}\~{a}o'' (POCTI), cofinanced by the European Community fund FEDER.


\end{document}